\documentclass[12pt]{amsart}
\usepackage{mathrsfs,pstricks,ifpdf,tikz}
\usepackage{amsfonts}
\usepackage{enumerate}
\topmargin 0 pt \textheight 46\baselineskip \advance\textheight by \topskip
\setlength{\parindent}{16pt}
\setlength{\parskip}{3pt plus 1pt minus 1pt}  
\setlength{\textwidth}{145mm}
\setlength{\oddsidemargin}{5.6mm} \setlength{\evensidemargin}{5.6mm}

\usepackage{makecell}
\newcommand{\bahao}{\fontsize{6pt}{\baselineskip}\selectfont}
\newcommand{\tabincell}[2]{\begin{tabular}{@{}#1@{}}#2\end{tabular}}
\usepackage{latexsym}
\usepackage{amsmath,amssymb}
\usepackage{amsthm}
\usepackage{ifthen,calc}
\usepackage{color}
\usepackage{graphicx}
\usepackage{latexsym}
\usepackage{multirow}
 \usepackage{diagbox}
 \usepackage{float}
\usepackage[format=hang, margin=10pt]{caption}

\newtheorem{theorem}{Theorem}[section]

\newtheorem{lemma}[theorem]{Lemma}


\newcounter{hours}
\newcounter{minutes}
\newcommand{\printtime}{
    \setcounter{hours}{\time/60}%
    \setcounter{minutes}{\time-\value{hours}*60}
    \ifthenelse{\value{hours}<10}{0}{}\thehours:%
    \ifthenelse{\value{minutes}<10}{0}{}\theminutes}

\begin{document}
\title{The thickness of $K_{1,n,n}$ and $K_{2,n,n}$}

\author{Xia Guo}
\address{Department of Mathematics, Tianjin University, 300072, Tianjin, China}
\email{guoxia@tju.edu.cn}
\author{Yan Yang}
\address{Department of Mathematics, Tianjin University, 300072, Tianjin, China}
\email{yanyang@tju.edu.cn    (Corresponding author: Yan YANG)}
\thanks{This work was supported by NNSF of China under Grant  No. 11401430}

\begin{abstract}
The thickness of a graph $G$ is the minimum number of planar subgraphs whose union is $G$. In this paper, we obtain the thickness of complete 3-partite graph $K_{1,n,n},K_{2,n,n}$ and complete 4-partite graph $K_{1,1,n,n}$.
\end{abstract}

\keywords{thickness; complete 3-partite graph; complete 4-partite graph.}

\subjclass[2010] {05C10}
\maketitle

\section{Introduction}

\noindent

The \textit{thickness} $\theta(G)$ of a graph $G$ is the minimum number of planar subgraphs whose union is $G$. It was first defined by W.T.Tutte \cite{Tut63} in 1963, then a few authors obtained  the thickness of hypercubes \cite{Kl67}, complete graphs \cite{AG76,BH65,Vas76} and complete bipartite graphs \cite{BHM64}. Naturally, people wonder about the thickness of the complete multipartite graphs.

A \textit{complete $k$-partite graph} is a graph whose vertex set can be partitioned into $k$ parts, such that every edge has its ends in different parts and every two vertices in different parts are adjacent.
Let $K_{p_1,p_2,\ldots,p_k}$ denote a complete $k$-partite graph in which the $i$th part contains $p_{i}$ $(1\leq i\leq k)$ vertices. For the complete 3-partite graph, Poranen proved $\theta(K_{n,n,n})\leq\big\lceil\frac{n}{2}\big\rceil$ in \cite{Por05}, then Yang \cite{Yan16} gave a new upper bound for $\theta(K_{n,n,n})$, i.e., $\theta(K_{n,n,n})\leq\big\lceil\frac{n+1}{3}\big\rceil+1$ and obtained $\theta(K_{n,n,n})=\big\lceil\frac{n+1}{3}\big\rceil$, when $n\equiv 3$ (mod $6$). And also Yang \cite{Yan14} gave the thickness number of $K_{l,m,n} (l\leq m\leq n)$ when $l+m\leq 5$ and showed that $\theta(K_{l,m,n})=\lceil\frac{l+m}{2}\rceil$ when $l+m$ is even and $n>\frac{1}{2}(l+m-2)^2$; or $l+m$ is odd and $n>(l+m-2)(l+m-1)$.

In this paper, we obtain the thickness of complete 3-partite graph $K_{1,n,n}$ and $K_{2,n,n}$, and we also deduce the thickness of complete 4-partite graph $K_{1,1,n,n}$ from that of $K_{2,n,n}$.


\section{The thickness of $K_{1,n,n}$ }

In \cite{BHM64}, Beineke, Harary and Moon gave the thickness of complete bipartite graphs $K_{m,n}$ for most value of $m$ and $n$, and their theorem implies the following result immediately.

\begin{lemma}\cite{BHM64}\label{2.1} The thickness of the complete bipartite graph $K_{n,n}$ is
$$\theta(K_{n,n})=\big\lceil\frac{n+2}{4}\big\rceil.$$ \end{lemma}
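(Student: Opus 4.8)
The plan is to deduce the lemma directly from the general Beineke--Harary--Moon formula for complete bipartite graphs, namely
$$\theta(K_{m,n})=\Big\lceil\frac{mn}{2(m+n-2)}\Big\rceil,$$
which they established for the admissible pairs, and in particular along the diagonal $m=n$ (the diagonal case does not lie among the undetermined exceptional pairs, so only this instance of their theorem is needed). Setting $m=n$ and simplifying $2(m+n-2)=4(n-1)$ gives
$$\theta(K_{n,n})=\Big\lceil\frac{n^2}{2(2n-2)}\Big\rceil=\Big\lceil\frac{n^2}{4(n-1)}\Big\rceil,$$
so the entire claim reduces to the purely arithmetic identity $\big\lceil\frac{n^2}{4(n-1)}\big\rceil=\big\lceil\frac{n+2}{4}\big\rceil$ for every integer $n\ge 2$.

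To prove this identity I would first locate the real number $\frac{n^2}{4(n-1)}$ relative to $\frac{n+2}{4}$. A one-line computation gives $\frac{n^2}{4(n-1)}=\frac{n+2}{4}-\frac{n-2}{4(n-1)}$, and since $0\le\frac{n-2}{4(n-1)}<\frac14$ for all $n\ge 2$, this pins the value into the half-open interval $\big(\frac{n+1}{4},\,\frac{n+2}{4}\big]$. The upper bound immediately yields $\big\lceil\frac{n^2}{4(n-1)}\big\rceil\le\big\lceil\frac{n+2}{4}\big\rceil$, so only the reverse inequality remains.

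For the reverse inequality I would show that the interval $\big[\frac{n^2}{4(n-1)},\,\frac{n+2}{4}\big)$ contains no integer; its length $\frac{n-2}{4(n-1)}<1$ forces it to contain at most one, and a short case analysis on the residue of $n$ modulo $4$ rules it out. Writing $n=4k+r$ with $r\in\{0,1,2,3\}$, one checks in each case that $t=\big\lceil\frac{n+2}{4}\big\rceil$ is the smallest integer satisfying $4t(n-1)\ge n^2$, i.e.\ that $4t(n-1)\ge n^2$ holds while $4(t-1)(n-1)<n^2$. The only delicate case is $r=3$, that is $n\equiv 3\pmod 4$: there the inequality $4(t-1)(n-1)<n^2$ holds by the slimmest possible margin of a single unit, so this residue class is where the bound is tight and is the step I would verify most carefully; the remaining residues hold with room to spare.
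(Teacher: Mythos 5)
Your proposal is correct and follows essentially the same route as the paper: the paper gives no proof of Lemma \ref{2.1} beyond the remark that it is an immediate consequence of the Beineke--Harary--Moon theorem, and your argument simply carries out that deduction explicitly, specializing $\theta(K_{m,n})=\lceil\frac{mn}{2(m+n-2)}\rceil$ to $m=n$ (correctly noting the diagonal is not among the exceptional pairs) and verifying the arithmetic identity $\big\lceil\frac{n^2}{4(n-1)}\big\rceil=\big\lceil\frac{n+2}{4}\big\rceil$. The residue analysis, including the tight case $n\equiv 3\pmod 4$ where $4(t-1)(n-1)=n^2-1$, checks out.
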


 In \cite{CYi16}, Chen and Yin gave a planar decomposition of the complete bipartite graph $K_{4p,4p}$ with $p+1$ planar subgraphs. Figure 1 shows their planar decomposition of $K_{4p,4p}$, in which $\{u_1,\ldots, u_{4p}\}=U$ and $\{v_1,\ldots, v_{4p}\}=V$ are the 2-partite vertex sets of it. Based on their decomposition, we give a planar decomposition of $K_{2,n,n}$ with $p+1$ subgraphs when $n\equiv 0$ or $3$ (mod $4$) and prove the following lemma.

\begin{figure}[htp]
\begin{center}
\begin{tikzpicture}                                                         
[inner sep=0pt]
\filldraw [black] (0,2.1) circle (1.4pt)
                  (2.1,0) circle (1.4pt)
                  (2.1,4.2) circle (1.4pt)
                  (4.2,2.1) circle (1.4pt);
\filldraw [black]     (1.4,2.1) circle (1.4pt)
                      (2.8,2.1) circle (1.4pt)
                      (2.1,1.4) circle (1.4pt)
                      (2.1,2.8) circle (1.4pt);
\filldraw [black] (0.7,2.5) circle (1.4pt)
                  (0.7,1.7) circle (1.4pt);
\filldraw [black]      (3.5,2.5) circle (1.4pt)
                       (3.5,1.7) circle (1.4pt);
\filldraw [black] (2.5,0.7) circle (1.4pt)
                  (1.7,0.7) circle (1.4pt);
\filldraw [black]      (2.5,3.5) circle (1.4pt)
                       (1.7,3.5) circle (1.4pt);
\filldraw [black] (0.7,2.2) circle (0.7pt)
                  (0.7,2.05) circle (0.7pt)
                  (0.7,1.9) circle (0.7pt);
\filldraw [black]      (3.5,2.2) circle (0.7pt)
                       (3.5,2.05) circle (0.7pt)
                       (3.5,1.9) circle (0.7pt);
\filldraw [black] (2.2,0.7) circle (0.7pt)
                  (2.05,0.7) circle (0.7pt)
                  (1.9,0.7) circle (0.7pt);
\filldraw [black] (2.2,3.5) circle (0.7pt)
                  (2.05,3.5) circle (0.7pt)
                  (1.9,3.5) circle (0.7pt);
\draw (0,2.1)--(2.1,0);\draw (2.1,0)--(4.2,2.1);\draw (2.1,4.2)--(0,2.1);\draw ((4.2,2.1)--(2.1,4.2);
\draw (1.4,2.1)--(2.1,1.4);\draw (2.1,1.4)--(2.8,2.1);\draw (2.8,2.1)--(2.1,2.8);\draw (1.4,2.1)--(2.1,2.8);
\draw (0,2.1)--(0.7,2.5);\draw (0,2.1)--(0.7,1.7);
\draw (1.4,2.1)--(0.7,2.5);\draw (1.4,2.1)--(0.7,1.7);
\draw (2.8,2.1)--(3.5,2.5);\draw (2.8,2.1)--(3.5,1.7) ;
\draw (4.2,2.1)--(3.5,2.5);\draw (4.2,2.1)--(3.5,1.7) ;
\draw (2.1,0)--(2.5,0.7);\draw (2.1,0)--(1.7,0.7);
\draw (2.1,1.4)--(2.5,0.7);\draw (2.1,1.4)--(1.7,0.7);
\draw (2.1,2.8)--(2.5,3.5);\draw (2.1,2.8)--(1.7,3.5);
\draw (2.1,4.2)--(2.5,3.5);\draw (2.1,4.2)--(1.7,3.5);
\draw (2.1,-0.2) node {\tiny $v_{4r}$}
      (2.1,4.35) node {\tiny $v_{4r-3}$}
      (-0.26,2.3) node {\tiny $u_{4r-1}$}
      (4.5,2.25) node {\tiny $u_{4r-2}$};
\draw    (2.55,1.35) node {\tiny $v_{4r-2}$}
          (1.68,2.83) node {\tiny $v_{4r-1}$}
           (1.4,1.85) node {\tiny $u_{4r}$}
             (2.35,2.1) node {\tiny $u_{4r-3}$};
\draw[-](2.1,4.2)..controls+(-1.2,-1.2)and+(-0.1,0.5)..(1.4,2.1);
\draw[-](0,2.1)..controls+(1.2,-1.2)and+(-0.5,-0.1)..(2.1,1.4);
\draw[-](2.1,0)..controls+(1.2,1.2)and+(0.1,-0.5)..(2.8,2.1);
\draw[-](4.2,2.1)..controls+(-1.2,1.2)and+(0.5,0.1)..(2.1,2.8);
\node(5) at(2.1,2.45)[circle,draw]{\scriptsize $5$};
\node(4) at(2.7,3.2)[circle,draw]{\scriptsize $4$};
\node(3) at(3.15,1.5)[circle,draw]{\scriptsize $3$};
\node(1) at(1,2.7)[circle,draw]{\scriptsize $1$};
\node(2) at(1.4,1)[circle,draw]{\scriptsize $2$};
\draw[-](0.7,2) to (-0.5,1.4);\draw (-1.2,1.2) node {\tiny $\bigcup\limits^p_{i=1,i\neq r}\{v_{4i-3},v_{4i-1}\}\triangleq V_1^r$};
\draw[-](3.6,2.1) to (4.2,3.2);\draw (4.8,3.5) node {\tiny $\bigcup\limits^p_{i=1,i\neq r}\{v_{4i-2},v_{4i}\}\triangleq V_2^r$};
\draw[-](2.00,0.8) to (3.5,1);\draw (5.00,1.0) node {\tiny $\bigcup\limits^p_{i=1,i\neq r}\{u_{4i-1},u_{4i}\}\triangleq U_2^r$};
\draw[-](2.05,3.5) to (0.7,3.9);\draw (-0.2,4.03) node {\tiny $\bigcup\limits^p_{i=1,i\neq r}\{u_{4i-3},u_{4i-2}\}\triangleq U_1^r$};
\end{tikzpicture}
\\ (a) The graph $G_r\ \ (1\leq r\leq p)$
\end{center}
\end{figure}
\begin{figure}[htp]
\begin{center}
\begin{tikzpicture}
[xscale=1]
\tikzstyle{every node}=[font=\tiny,scale=1]
[inner sep=0pt]\tiny

\filldraw [black] (0,1) circle (1.4pt)
                   (0,-0.5) circle (1.4pt)
                  (1,1) circle (1.4pt)
                   (1,-0.5) circle (1.4pt)
                   (3,1) circle (1.4pt)
                   (3,-0.5) circle (1.4pt)
                  (4,1) circle (1.4pt)
                  (4,-0.5) circle (1.4pt);
\draw(4,1)-- (4,-0.5);\draw(3,1)-- (3,-0.5);
\draw(1,1)-- (1,-0.5);\draw(0,1)-- (0,-0.5);
\filldraw [black] (1.8,0.3) circle (0.8pt)
                               (2,0.3) circle (0.8pt)
                               (2.2,0.3) circle (0.8pt);
\draw (-0.2,0.9) node {\tiny $u_{1}$}
      (-0.2,-0.4) node {\tiny $v_{1}$}
      (0.8,0.9) node {\tiny $u_{2}$}
      (0.8,-0.4) node {\tiny $v_{2}$};
\draw (3. 5,0.9) node {\tiny $u_{4p-1}$}
      (3.45,-0.4) node {\tiny $v_{4p-1}$}
      (4.35,0.9) node {\tiny $u_{4p}$}
      (4.3,-0.4) node {\tiny $v_{4p}$};
\end{tikzpicture}
\\ (b) The graph $G_{p+1}$
\caption{A planar decomposition of $K_{4p,4p}$}
\label{figure 1}
\end{center}
\end{figure}

\begin{lemma}\label{2.2} The thickness of the complete 3-partite graph $K_{1,n,n}$ and $K_{2,n,n}$ is
$$\theta(K_{1,n,n})=\theta(K_{2,n,n})=\big\lceil\frac{n+2}{4}\big\rceil,$$
when $n\equiv 0$ or $3$ (mod $4$).\end{lemma}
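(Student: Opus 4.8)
The plan is to reduce both equalities to a single planar decomposition of $K_{2,n,n}$ and to read off the lower bounds from the complete bipartite subgraph $K_{n,n}$. Since $K_{n,n}\subseteq K_{1,n,n}\subseteq K_{2,n,n}$ and thickness is monotone under taking subgraphs (restricting a planar decomposition of $G$ to the edges of a subgraph $H$ again yields a planar decomposition of $H$), Lemma \ref{2.1} gives
$$\theta(K_{1,n,n}),\ \theta(K_{2,n,n})\ \geq\ \theta(K_{n,n})=\big\lceil\frac{n+2}{4}\big\rceil.$$
Moreover $K_{1,n,n}\subseteq K_{2,n,n}$ forces $\theta(K_{1,n,n})\leq\theta(K_{2,n,n})$, so it suffices to establish the single upper bound $\theta(K_{2,n,n})\leq\big\lceil\frac{n+2}{4}\big\rceil$.

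I first treat the main case $n\equiv 0\pmod 4$, say $n=4p$, where $\big\lceil\frac{n+2}{4}\big\rceil=p+1$. Write the three parts of $K_{2,4p,4p}$ as $\{a_1,a_2\}$, $U=\{u_1,\dots,u_{4p}\}$ and $V=\{v_1,\dots,v_{4p}\}$, and start from the Chen--Yin decomposition $G_1,\dots,G_{p+1}$ of $K_{4p,4p}$ on $U\cup V$ depicted in Figure 1, in which $G_{p+1}$ is exactly the perfect matching $M=\{u_iv_i:1\leq i\leq 4p\}$. I keep $G_1,\dots,G_p$ unchanged and absorb every edge incident with $a_1$ or $a_2$ into the last subgraph, setting $G_{p+1}'=M\cup\{a_1x,a_2x:x\in U\cup V\}$. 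Then $G_1\cup\cdots\cup G_p\cup G_{p+1}'$ is precisely $K_{2,4p,4p}$, the decomposition still uses $p+1$ subgraphs, and the whole problem reduces to checking that $G_{p+1}'$ is planar.

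The graph $G_{p+1}'$ is a copy of $K_{2,8p}$ (with sides $\{a_1,a_2\}$ and $U\cup V$) together with the matching $M$, and its planarity is the heart of the argument. I exhibit a crossing-free drawing: place $a_1$ above and $a_2$ below a horizontal line and list the $8p$ vertices of $U\cup V$ on that line in the order $u_1,v_1,u_2,v_2,\dots,u_{4p},v_{4p}$, joining each to both $a_1$ and $a_2$. This standard embedding of $K_{2,8p}$ has a quadrilateral face bounded by $a_1\,s\,a_2\,s'$ for every pair $s,s'$ of consecutive vertices on the line; since each matching edge $u_iv_i$ joins the consecutive vertices in positions $2i-1$ and $2i$, it can be drawn as the diagonal of such a face, and distinct matching edges fall in distinct faces. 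Hence $G_{p+1}'$ is planar and $\theta(K_{2,4p,4p})\leq p+1$. This planarity claim is exactly the main obstacle: routing all $16p$ new edges through one subgraph works only because $G_{p+1}$ is a matching whose matched pairs can be made consecutive along the spine of a $K_{2,8p}$ embedding; were $G_{p+1}$ denser, one would instead have to spread the edges of $a_1$ and $a_2$ over several of the $G_r$ and analyze their individual face structures.

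Finally, the case $n\equiv 3\pmod 4$ follows by padding and deletion. Write $n=4p-1$, so that $\big\lceil\frac{n+2}{4}\big\rceil=p+1$ again, and apply the previous construction to $m=n+1=4p$ to decompose $K_{2,4p,4p}$ into $p+1$ planar subgraphs. Deleting one vertex from each of the two size-$4p$ parts leaves $K_{2,4p-1,4p-1}=K_{2,n,n}$ as a subgraph of $K_{2,4p,4p}$, and restricting the decomposition gives $\theta(K_{2,n,n})\leq p+1=\big\lceil\frac{n+2}{4}\big\rceil$. Together with the lower bound this yields $\theta(K_{2,n,n})=\big\lceil\frac{n+2}{4}\big\rceil$, and since $K_{1,n,n}\subseteq K_{2,n,n}$ has the same lower bound, $\theta(K_{1,n,n})=\big\lceil\frac{n+2}{4}\big\rceil$ as well.
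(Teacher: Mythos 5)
Your proof is correct and follows essentially the same route as the paper: the lower bound comes from $K_{n,n}\subseteq K_{1,n,n}\subseteq K_{2,n,n}$ with Lemma \ref{2.1}, the upper bound for $n=4p$ comes from augmenting the matching subgraph $G_{p+1}$ of the Chen--Yin decomposition into a planar $K_{2,8p}$-plus-matching, and the case $n\equiv 3\pmod 4$ is handled by padding to the next multiple of $4$ and restricting. The only differences are cosmetic: you spell out the planar embedding of $\widehat{G}_{p+1}$ that the paper leaves as ``easy to see,'' and your indexing absorbs the $n=3$ case into the general padding argument rather than citing it separately.
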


\begin{proof}~~Let the vertex partition of $K_{2,n,n}$ be $(X,U,V)$, where $X=\{x_1,x_2\}$, $U=\{u_1, \dots, u_{n}\}$ and $V=\{v_1, \dots,v_{n}\}$.

When $n\equiv 0$(mod $4$), let $n=4p ~(p\geq 1)$. Let $\{G_1,\dots,G_{p+1}\}$ be the planar decomposition of $K_{n,n}$ constructed by Chen and Yin in \cite{CYi16}. As shown in Figure 1, the graph $G_{p+1}$ consists of $n$ paths of length one. We put all the $n$ paths in a row, place vertex $x_1$ on one side of the row and the vertex $x_2$ on the other side of the row, join both $x_1$ and $x_2$ to all vertices in $G_{p+1}$. Then we get a planar graph, denote it by $\widehat{G}_{p+1}$.  It is easy to see that $\{G_1,\dots,G_p,\widehat{G}_{p+1}\}$ is a planar decomposition of $K_{2,n,n}$. Therefore, we have $\theta(K_{2,n,n})\leq p+1$. Since $K_{n,n}\subset K_{1,n,n}\subset K_{2,n,n}$, combining it with Lemma \ref{2.1}, we have
$$p+1=\theta(K_{n,n})\leq \theta(K_{1,n,n})\leq \theta(K_{2,n,n})\leq p+1,$$
that is,  $\theta(K_{1,n,n})=\theta(K_{2,n,n})=p+1$ when $n\equiv 0$(mod $4$).

When $n\equiv 3$(mod $4$), then $n=4p+3 ~(p\geq 0)$.  When $p=0$, from \cite{Yan14}, we have $\theta(K_{1,3,3})=\theta(K_{2,3,3})=2$.
When $p\geq 1$, since $K_{n,n}\subset K_{1,n,n}\subset K_{2,n,n}\subset K_{2,n+1,n+1}$, according to Lemma \ref{2.1} and $\theta(K_{2,4p,4p})= p+1$, we have
$$p+2=\theta(K_{n,n}) \leq \theta(K_{1,n,n})\leq \theta(K_{2,n,n})\leq \theta(K_{2,n+1,n+1})=p+2.$$
Then, we get $\theta(K_{1,n,n})=\theta(K_{2,n,n})=p+2$ when $n\equiv 3$(mod $4$).

Summarizing the above, the lemma is obtained.
\end{proof}


\begin{lemma}\label{2.3} There exists a planar decomposition of the complete 3-partite graph $K_{1,4p+2,4p+2}$ $(p\geq 0)$ with $p+1$ subgraphs.\end{lemma}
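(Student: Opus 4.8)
The statement asks only for the existence of a planar decomposition with $p+1$ parts, i.e.\ an upper bound on the thickness, so the whole task is to exhibit such a decomposition; the matching lower bound, if wanted, is immediate from $K_{4p+2,4p+2}\subset K_{1,4p+2,4p+2}$ together with Lemma~\ref{2.1}. I first dispose of the base case $p=0$: the graph $K_{1,2,2}$ is $K_5$ with the two edges $u_1u_2$ and $v_1v_2$ deleted, hence planar, so a single subgraph suffices. For $p\geq 1$ my plan is to begin from the Chen--Yin decomposition $\{G_1,\dots,G_{p+1}\}$ of $K_{4p,4p}$ recalled in Figure~\ref{figure 1}, enlarge the two colour classes by the new vertices $u_{4p+1},u_{4p+2}$ and $v_{4p+1},v_{4p+2}$, and finally absorb the apex vertex $x$, all without increasing the number of subgraphs.

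The structural fact I would exploit is that in a thickness decomposition every vertex may occur in every subgraph, only the edge sets being partitioned. Thus the apex $x$ (of degree $8p+4$) and each new vertex (of degree $4p+2$) can spread their incident edges across all $p+1$ parts, so that in any single part each is responsible for only a handful of edges. Concretely, I would modify each $G_r$ $(1\le r\le p)$ by rerouting a bounded number of its edges so as to expose one face incident with the four vertices $u_{4r-3},\dots,u_{4r}$ and another incident with $v_{4r-3},\dots,v_{4r}$; into the first I plant $v_{4p+1},v_{4p+2}$ and join them to those four $u$'s, into the second I plant $u_{4p+1},u_{4p+2}$ and join them to those four $v$'s, and I insert $x$ where the still‑uncovered incidences lie. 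Letting $r$ run from $1$ to $p$ collects all edges from the four new vertices to the first $4p$ vertices of the opposite class, and the residual edges—the matching of $G_{p+1}$, the four edges $u_{4p+i}v_{4p+j}$ among the new vertices, and the remaining apex edges—are packed into a redrawn last part $\widehat{G}_{p+1}$, in the spirit of the graph $\widehat{G}_{p+1}$ already used in Lemma~\ref{2.2}.

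The main obstacle is planarity, and it is genuinely tight. Each Chen--Yin graph $G_r$ is a \emph{maximal} planar bipartite quadrangulation on its $8p$ vertices, so every face is a $4$-cycle carrying two vertices of each class; in particular four $u$-vertices are never cofacial unless edges are first removed. This is affordable, however, precisely because enlarging the vertex set to $8p+4$ raises the planar‑bipartite edge bound of each part from $2(8p)-4=16p-4$ to $2(8p+4)-4=16p+4$, an increase of exactly $8$, which is the budget I spend in opening the faces and grafting on the new bipartite edges; the apex edges are comparatively free, since once $x$ is present the relevant part is no longer bipartite and its edge bound jumps to $3N-6$. Hence the heart of the argument is the explicit redrawing that simultaneously creates the required common faces, leaves room for the two extra vertices of each class together with $x$, and keeps all $p+1$ augmented subgraphs plane. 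I would present this as one figure for the generic part $G_r$ and one for $\widehat{G}_{p+1}$, and then verify the routine bookkeeping that the edge sets of the augmented subgraphs partition $E(K_{1,4p+2,4p+2})$, which completes the proof.
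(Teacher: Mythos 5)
There is a genuine gap, and it sits exactly at the step your construction depends on. You propose to open, in each $G_r$, a single face incident with all four of $u_{4r-3},\dots,u_{4r}$, to plant \emph{both} $v_{4p+1}$ and $v_{4p+2}$ inside it, and to join each of them to all four of those $u$'s. That configuration is not planar: it asks for an embedding of $K_{2,4}$ in a disk with the four degree-two vertices on the boundary circle, and no such embedding exists, since every face of a plane drawing of $K_{2,4}$ is a quadrilateral containing only two of the degree-two vertices. Concretely, once $v_{4p+1}$ is joined to the four $u$'s the face is cut into four regions, and from any one of them $v_{4p+2}$ can reach only two consecutive $u$'s. The same objection applies to the twin step with $u_{4p+1},u_{4p+2}$ and the four $v$'s. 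Your own edge count also contradicts the plan: you correctly observe that each $G_r$ is an edge-maximal bipartite plane graph on $8p$ vertices ($16p-4$ edges) and that enlarging to $8p+4$ vertices buys exactly $8$ additional bipartite edges, yet the plan inserts $2\cdot 4+2\cdot 4=16$ new $U$--$V$ edges into each part; so at least $8$ old edges per part would have to be deleted and rehoused in $\widehat{G}_{p+1}$, an extra $\approx 8p$ edges that the proposal never tracks and that would strain the final part.

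The paper's construction is shaped precisely to dodge this. Each of the four new vertices receives only \emph{two} new neighbours in each $G_r$, and those neighbours are not the hub vertices $u_{4r-3},\dots,u_{4r}$ but a pair of consecutive degree-two vertices from $U_1^r$ or $U_2^r$ (respectively $V_1^r$, $V_2^r$); after a ``parallel paths modification'' such a pair bounds a quadrilateral face into which the new vertex drops with two edges, a trivially planar operation. Only $8$ new bipartite edges enter each $G_r$ (within the Euler budget), only two edges are deleted (to open a face for $x$), and the subscripts of the chosen pairs are arranged to sweep out all residues as $r$ runs from $1$ to $p$, with the leftovers forming the explicitly drawn planar graph $\widehat{G}_{p+1}$. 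That subscript bookkeeping, together with the separate treatment of $p$ even, $p$ odd, and $p\le 3$, is the actual content of the lemma; your sketch defers it entirely, and the step it does commit to cannot be carried out.
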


\begin{proof}~~Suppose the vertex partition of the complete 3-partite graph $K_{1,n,n}$ is $(X,U,V)$, where $X=\{x\}$, $U=\{u_1, \dots, u_{n}\}$ and $V=\{v_1, \dots,v_{n}\}$. When $n=4p+2$, we will construct a planar decomposition of $K_{1,4p+2,4p+2}$ with $p+1$ planar subgraphs to complete the proof. Our construction is based on the planar decomposition $\{G_1,$ $G_2,\dots,G_{p+1}\}$ of $K_{4p,4p}$ given in \cite{CYi16}, as shown in Figure 1 and the reader is referred to \cite{CYi16} for more details about this decomposition. For convenience, we denote the vertex set $\bigcup\limits^p_{i=1,i\neq r}\{u_{4i-3},u_{4i-2}\}$, $\bigcup\limits^p_{i=1,i\neq r}\{u_{4i-1},u_{4i}\}$,
$\bigcup\limits^p_{i=1,i\neq r}\{v_{4i-3},v_{4i-1}\}$ and $\bigcup\limits^p_{i=1,i\neq r}\{v_{4i-2},v_{4i}\}$ by $U_1^r$, $U_2^r$, $V_1^r$ and $V_2^r$ respectively.  We also label some faces of $G_r$ $(1\leq r\leq p)$, as indicated in Figure \ref{figure 1}, for example, the face $1$ is bounded by $v_{4r-3}u_{4r}v_{j}u_{4r-1}$ in which $v_{j}$ is some vertex from $V_1^r$.

 In the following, for $1\leq r\leq p+1$, by adding vertices $x,u_{4p+1},u_{4p+2},v_{4p+1},v_{4p+2}$ and some edges to $G_r$, and deleting some edges from $G_{r}$ such edges will be added to the graph $G_{p+1}$, we will get a new planar graph $\widehat{G}_r$ such that $\{\widehat{G}_1,\dots,\widehat{G}_{p+1}\}$ is a planar decomposition of $K_{1,4p+2,4p+2}$.  Because $v_{4r-3}$ and $v_{4r-1}$ in $G_{r}$ $(1\leq r\leq p)$ is joined by $2p-2$ edge-disjoint paths of length two that we call parallel paths, we can change the order of these parallel paths without changing the planarity of $G_{r}$.  For the same reason, we can do changes like this for parallel paths between $u_{4r-1}$ and $u_{4r}$, $v_{4r-2}$ and $v_{4r}$, $u_{4r-3}$ and $u_{4r-2}$.  We call this change by parallel paths modification for simplicity. All the subscripts of vertices are taken modulo $4p$, except that of $v_{4p+1}, v_{4p+2}, u_{4p+1}$ and $u_{4p+2}$ (the vertices we added to $G_{r}$).

{\bf Case 1.}~  When $p$ is even and $p> 2$.

\noindent {\bf (a)}~ The construction for $\widehat{G}_r$ , $1\leq r\leq p$, and $r$ is odd.

\noindent {\bf Step 1:}~~ Place the vertex $x$ in the face $1$ of $G_{r}$, delete edges $v_{4r-3}u_{4r}$ and $u_{4r}v_{4r-1}$ from $G_r$. Do parallel paths modification, such that $u_{4r+6}\in U_1^r$, $v_{4r+1}\in V_1^r$ and $u_{4r-3},u_{4r-1},u_{4r},v_{4r-3},v_{4r-2},v_{4r-1}$ are incident with a common face which  the vertex $x$ is in. Join $x$ to $u_{4r-3},u_{4r-1},u_{4r},v_{4r-3},v_{4r-2},v_{4r-1}$ and $u_{4r+6}$, $v_{4r+1}$.

\noindent {\bf Step 2:}~~ Do parallel paths modification, such that $u_{4r+11}, u_{4r+12}\in U_2^r $ are incident with a common face. Place the vertex $v_{4p+1}$ in the face, and join it to both $u_{4r+11}$ and $u_{4r+12}$.

\noindent {\bf Step 3:}~~ Do parallel paths modification, such that $u_{4r+7}, u_{4r+8}\in U_2^r $ are incident with a common face. Place the vertex $v_{4p+2}$ in the face, and join it to both $u_{4r+7}$ and $u_{4r+8}$.

\noindent {\bf Step 4:}~~ Do parallel paths modification, such that $v_{4r+10}, v_{4r+12}\in V_2^r $ are incident with a common face. Place the vertex $u_{4p+1}$ in the face, and join it to both $v_{4r+10}$ and $v_{4r+12}$.

\noindent {\bf Step 5:}~~ Do parallel paths modification, such that $v_{4r+6}, v_{4r+8}\in V_2^r $ are incident with a common face. Place the vertex $u_{4p+2}$ in the face, and join it to both $v_{4r+6}$ and $v_{4r+8}$.

\noindent {\bf (b)}~ The construction for $\widehat{G}_r$, $1\leq r\leq p$, and $r$ is even.

\noindent {\bf Step 1:}~~ Place the vertex $x$ in the face $3$ of $G_{r}$, delete edges $v_{4r}u_{4r-3}$ and $u_{4r-3}v_{4r-2}$ from $G_r$. Do parallel paths modification, such that $u_{4r+7}\in U_2^r$, $v_{4r+4}\in V_2^r$ and $u_{4r-3},u_{4r-2},u_{4r},v_{4r-2},v_{4r-1},v_{4r}$ are incident with a common face which  the vertex $x$ is in. Join $x$ to $u_{4r-3},u_{4r-2},u_{4r},v_{4r-2},v_{4r-1},v_{4r}$ and $u_{4r+7}$, $v_{4r+4}$.

\noindent {\bf Step 2:}~~ Do parallel paths modifications, such that $u_{4r+5}, u_{4r+6}\in U_1^r $, $u_{4r+1}, u_{4r+2}\in U_1^r $, $v_{4r+5}, v_{4r+7}\in V_1^r $,  $v_{4r+1}, v_{4r+3}\in V_1^r $ are incident with a common face, respectively. Join $v_{4p+1}$ to both $u_{4r+5}$ and $u_{4r+6}$, join $v_{4p+2}$ to both $u_{4r+1}$ and $u_{4r+2}$, join $u_{4p+1}$ to both $v_{4r+5}$ and $v_{4r+7}$, join $u_{4p+2}$ to both $v_{4r+1}$ and $v_{4r+3}$.

Table \ref{tab 1} shows how we add edges to $G_{r} (1\leq r\leq p)$ in Case 1. The first column lists the edges we added,  the second and third column lists the subscript of vertices, and we also indicate the vertex set which they belong to in brackets.

\begin{table}[htbp]
\centering\caption {The edges we add to $G_{r} (1\leq r \leq p)$ in Case 1}\label{tab 1}\tiny
\renewcommand{\arraystretch}{1.5}
\begin{tabular}{|*{10}{c|}}
\hline
\multicolumn{2}{|c|}{\diagbox {edge}{ subscript}{ case} } &  \multicolumn{4}{|c|}{$r$ is odd}  & \multicolumn{4}{|c|}{$r$ is even} \\
\hline
 \multicolumn{2}{|c|}{$xu_j$}     & \multicolumn{3}{|c|}{$4r-3,4r-1,4r$} & $4r+6~(U_1^r)$ & \multicolumn{3}{|c|}{$4r-3,4r-2,4r$} & $4r+7~(U_2^r)$    \\
\hline
 \multicolumn{2}{|c|}{$xv_j$} & \multicolumn{3}{|c|}{$4r-3,4r-2,4r-1$} & $4r+1~(V_1^r)$ & \multicolumn{3}{|c|}{$4r-2,4r-1,4r$} & $4r+4~(V_2^r)$  \\
\hline
 \multicolumn{2}{|c|}{$v_{4p+1}u_j$}  & \multicolumn{4}{|c|}{$4r+11,4r+12~(U_2^r)$}  & \multicolumn{4}{|c|}{$4r+5,4r+6~(U_1^r)$} \\
\hline
\multicolumn{2}{|c|}{$v_{4p+2}u_j$}
& \multicolumn{4}{|c|}{$4r+7,4r+8~(U_2^r)$}  & \multicolumn{4}{|c|}{$4r+1,4r+2~(U_1^r)$} \\
\hline
\multicolumn{2}{|c|}{$u_{4p+1}v_j$}   & \multicolumn{4}{|c|}{$4r+10,4r+12~(V_2^r)$}  & \multicolumn{4}{|c|}{$4r+5,4r+7~(V_1^r)$} \\
\hline
\multicolumn{2}{|c|}{$u_{4p+2}v_j$} & \multicolumn{4}{|c|}{$4r+6,4r+8~(V_2^r)$}  & \multicolumn{4}{|c|}{$4r+1,4r+3~(V_1^r)$} \\
\hline
\end{tabular}
\end{table}

\noindent{\bf (c)}~ The construction for $\widehat{G}_{p+1}$.

From the construction in (a) and (b), the subscript set of $u_{j}$ that $xu_{j}$ is an edge in $\widehat{G}_{r}$ for some $r\in \{1,\ldots, p\}$ is
$$\{4r-3, 4r-1, 4r, 4r+6 \mbox{(mod $4p$)}~|~1\leq r\leq p, ~\mbox{and $r$ is odd}\}$$
$$\cup\{4r-3, 4r-2, 4r, 4r+7 \mbox{(mod $4p$)}~|~1\leq r\leq p, ~\mbox{and $r$ is even}\}$$
$$=\{1,\ldots,p\}.$$
The subscript set of $u_{j}$ that $v_{4p+1}u_{j}$ is an edge in $\widehat{G}_{r}$ for some $r\in \{1,\ldots, p\}$ is
$$\{4r+11, 4r+12 \mbox{(mod $4p$)}~|~1\leq r\leq p, ~\mbox{and $r$ is odd}\}$$
$$\cup\{4r+5, 4r+6 \mbox{(mod $4p$)}~|~1\leq r\leq p, ~\mbox{and $r$ is even}\}$$
$$=\{4r-3, 4r-2, 4r-1, 4r~|~1\leq r\leq p, ~\mbox{and $r$ is even}\}.$$
Using the same procedure, we can list all the edges incident with $x$, $v_{4p+1}$, $v_{4p+2}$, $u_{4p+1}$ and $u_{4p+2}$ in $\widehat{G}_{r}$ $(1\leq r\leq p)$, so we can also list the edges that are incident with $x$, $v_{4p+1}$, $v_{4p+2}$, $u_{4p+1}$ in $K_{1,4p+2,4p+2}$ but not in any $\widehat{G}_{r}$ $(1\leq r\leq p)$.  Table \ref{tab 2} shows the edges that belong to $K_{1,4p+2,4p+2}$ but not to any $\widehat{G}_{r}$, $1\leq r\leq p$, in which the the fourth and fifth rows list the edges deleted form $G_r\ (1\leq r\leq p)$ in step one of (a) and (b),  and the sixth row lists the edges of $G_{p+1}$. The $\widehat{G}_{p+1}$ is the graph consists of the edges in Table \ref{tab 2}, Figure \ref{figure 2} shows $\widehat{G}_{p+1}$ is a planar graph.

\begin{table}[H]
\centering\caption {The edges of $\widehat{G}_{p+1}$ in Case 1} \label{tab 2}
\tiny
\renewcommand{\arraystretch}{1.6}
\begin{tabular}{|*{2}{c|}}
\hline
\Gape[8pt]edges & subscript\\
\hline
 $xv_{4p+1},xu_{4p+1},v_{4p+1}u_j,u_{4p+1}v_j$& $j=4r-3,4r-2,4r-1,4r,4p+2.$ ($r=1,3,\dots,p-1.$)\\
\hline
$xv_{4p+2},xu_{4p+2},v_{4p+2}u_j,u_{4p+2}v_j$ &$j=4r-3,4r-2,4r-1,4r,4p+1.$ ($r=2,4,\dots,p.$)\\
\hline
$v_{4r-3}u_{4r},u_{4r}v_{4r-1}$ &  $r=1,3,\dots,p-1.$ \\
\hline
$v_{4r}u_{4r-3},u_{4r-3}v_{4r-2}$ &  $r=2,4,\dots,p.$  \\
\hline
$u_{j}v_{j}$ &  $j=1,\dots,4p+2.$  \\
\hline
\end{tabular}
\end{table}

\vskip -0.4cm
\begin{figure}[H]
\begin{center}
\begin{tikzpicture}
[scale=0.95]
\tikzstyle{every node}=[font=\tiny,scale=0.95]
\tiny
[inner sep=0pt]
\filldraw [black] (0.7,1) circle (1.4pt)
                  (0.7,-0.5) circle (1.4pt)
                  (1.4,1) circle (1.4pt)
                  (1.4,-0.5) circle (1.4pt)
                   (2.1,1) circle (1.4pt)
                  (2.1,-0.5) circle (1.4pt)
                  (2.8,1) circle (1.4pt)
                  (2.8,-0.5) circle (1.4pt)
                  (4.2,1) circle (1.4pt)
                  (4.2,-0.5) circle (1.4pt)
                  (4.9,1) circle (1.4pt)
                  (4.9,-0.5) circle (1.4pt)
                  (5.6,1) circle (1.4pt)
                  (5.6,-0.5) circle (1.4pt)
                  (6.3,1) circle (1.4pt)
                  (6.3,-0.5) circle (1.4pt);
\draw (4.2,-0.5)--(4.9,1)-- (5.6,-0.5) ;\draw   (0.7,-0.5)--(1.4,1)-- (2.1,-0.5);
\draw (0.95,0.9)  node { $u_{6}$};\draw  (0.95,-0.4)node { $v_{6}$};
\draw (1.65,0.9)  node { $u_{5}$};\draw  (1.65,-0.4)node { $v_{5}$};
\draw (2.35,0.9)  node { $u_{8}$};\draw  (2.35,-0.4)node { $v_{8}$};
\draw (3,0.9)  node { $u_{7}$};\draw  (3,-0.4)node { $v_{7}$};
\draw [white](3.65,0.89) node [left]{}--(3.7,0.85)node [black,pos=0.02,above,sloped]{ $u_{4p-2}$}--(3.7,0.79) node [right]{};
\draw [white](4.35,0.89) node [left]{}--(4.4,0.86)node [black,pos=0.02,above,sloped]{ $u_{4p-3}$}--(4.4,0.79) node [right]{};
\draw [white](5.23,0.77) node [left]{}--(5.28,0.756)node [black,pos=0.02,above,sloped]{ $u_{4p}$}--(5.23,0.74) node [right]{};
\draw [white](5.8,0.857) node [left]{}--(5.85,0.84)node [black,pos=0.02,above,sloped]{ $ u_{4p-1}$}--(5.85,0.84) node [right]{};
\draw [white](3.75,-0.25) node [left]{}--(3.8,-0.34)node [black,pos=0.02,above,sloped]{ $v_{4p-2}$}--(3.8,-0.34) node [right]{};
\draw [white](4.49,-0.21) node [left]{}--(4.54,-0.34)node [black,pos=0.02,above,sloped]{$ v_{4p-3}$ }--(4.54,-0.40) node [right]{};
\draw [white](5.15,-0.4) node [left]{}--(5.17,-0.43)node [black,pos=0.02,above,sloped]{ $ v_{4p}$}--(5.17,-0.46) node [right]{};
\draw [white](5.82,-0.25) node [left]{}--(5.85,-0.30)node [black,pos=0.02,above,sloped]{ $ v_{4p-1}$}--(5.85,-0.37) node [right]{};
\filldraw [black] (-0.7,1) circle (1.4pt)    (-0.7,-0.5) circle (1.4pt)      (-1.4,1) circle (1.4pt)
                  (-1.4,-0.5) circle (1.4pt)            (-2.1,1) circle (1.4pt)            (-2.1,-0.5) circle (1.4pt)
                  (-2.8,1) circle (1.4pt)               (-2.8,-0.5) circle (1.4pt)        (-4.2,1) circle (1.4pt)
                  (-4.2,-0.5) circle (1.4pt)          (-4.9,1) circle (1.4pt)             (-4.9,-0.5) circle (1.4pt)
                  (-5.6,1) circle (1.4pt)               (-5.6,-0.5) circle (1.4pt)         (-6.3,1) circle (1.4pt)
                  (-6.3,-0.5) circle (1.4pt);
\draw (-6.3,1)--(-5.6,-0.5)-- (-4.9,1) ;\draw  (-2.8,1)-- (-2.1,-0.5)--(-1.4,1);
\draw (-6.45,0.8)  node { $v_{1}$};\draw  (-6.45,-0.3)node { $u_{1}$};
\draw (-5.85,0.9)  node { $v_{4}$};\draw  (-5.85,-0.4)node { $u_{4}$};
\draw (-5.15,0.9)  node { $v_{3}$};\draw  (-5.15,-0.4)node { $u_{3}$};
\draw (-4.4,0.9)  node { $v_{2}$};\draw  (-4.4,-0.4)node { $u_{2}$};
\draw [white](-3.35,0.87) node [left]{}--(-3.3,0.85)node [black,pos=0.02,above,sloped]{ $ v_{4p-7}$}--(-3.3,0.79) node [right]{};
\draw [white](-2.62,0.88) node [left]{}--(-2.57,0.86)node [black,pos=0.02,above,sloped]{ $ v_{4p-4}$}--(-2.57,0.79) node [right]{};
\draw [white](-1.92,0.825) node [left]{}--(-1.87,0.8)node [black,pos=0.02,above,sloped]{ $ v_{4p-5}$}--(-1.87,0.76) node [right]{};
\draw [white](-1.2,0.84) node [left]{}--(-1.15,0.82)node [black,pos=0.02,above,sloped]{ $ v_{4p-6}$}--(-1.15,0.82) node [right]{};

\draw [white](-3.32,-0.40) node [left]{}--(-3.27,-0.44)node [black,pos=0.02,above,sloped]{ $ u_{4p-7}$}--(-3.27,-0.44) node [right]{};
\draw [white](-2.62,-0.41) node [left]{}--(-2.57,-0.455)node [black,pos=0.02,above,sloped]{ $ u_{4p-4}$}--(-2.57,-0.46) node [right]{};
\draw [white](-1.85,-0.24) node [left]{}--(-1.83,-0.31)node [black,pos=0.02,above,sloped]{ $ u_{4p-5}$}--(-1.83,-0.31) node [right]{};
\draw [white](-1.2,-0.375) node [left]{}--(-1.17,-0.4)node [black,pos=0.02,above,sloped]{ $ u_{4p-6}$}--(-1.17,-0.4) node [right]{};
\filldraw [black] (0,0.25) circle (1.4pt);
\draw (0.2,0.25) node { $x$};
\draw[-] (0,0.25)..controls+(0.2,1.3)and+(-1,-0.1)..(3.5,1.9);
\draw[-] (0,0.25)..controls+(-0.2,1.3)and+(1,-0.1)..(-3.5,1.9);
\draw[-] (0,0.25)..controls+(0.2,-1.3)and+(-1,0.1)..(3.5,-1.4);
\draw[-] (0,0.25)..controls+(-0.2,-1.3)and+(1,0.1)..(-3.5,-1.4);
\filldraw [black] (-3.5,0.25) circle (1.0pt)
                  (-3.3,0.25) circle (1.0pt)
                  (-3.7,0.25) circle (1.0pt);
\filldraw [black] (3.5,0.25) circle (1.0pt)
                  (3.3,0.25) circle (1.0pt)
                  (3.7,0.25) circle (1.0pt)
                  (3.7,0.25) circle (1.0pt);
\filldraw [black] (-3.5,1.9) circle (1.4pt)
                  (-3.5,-1.4) circle (1.4pt)
                  (3.5,1.9) circle (1.4pt)
                  (3.5,-1.4) circle (1.4pt);
 \draw[-] (-3.5,1.9)..controls+(-4.5,-0.1)and+(-4.5,0.1)..  (-3.5,-1.4) ;
\draw[-] (3.5,1.9)..controls+(4.5,-0.1)and+(4.5,0.1)..(3.5,-1.4) ;
\draw  (-3.5,1.9)--(3.5,1.9); \draw   (-3.5,-1.4)--(3.5,-1.4);
\draw  (-3.5,2.1) node { $u_{4p+1}$};\draw (-3.5,-1.6)node { $v_{4p+1}$};
\draw(3.5,2.1) node { $v_{4p+2}$};\draw(3.5,-1.6)  node { $u_{4p+2}$};
\draw (-0.7,1) ..controls+(-0.1,0.2)and+(0.4,-0.1).. (-3.5,1.9);
\draw (-1.4,1)--(-3.5,1.9);\draw (-2.1,1)--(-3.5,1.9);
\draw (-2.8,1)--(-3.5,1.9);\draw (-4.2,1)--(-3.5,1.9);\draw (-4.9,1)--(-3.5,1.9);
\draw (-5.6,1)--(-3.5,1.9);\draw (-6.3,1)--(-3.5,1.9);
\draw (-3.5,-1.4)--(-0.7,-0.5);\draw (-3.5,-1.4)--(-1.4,-0.5);\draw (-3.5,-1.4)--(-2.1,-0.5);
\draw (-3.5,-1.4)--(-2.8,-0.5);\draw (-3.5,-1.4)--(-4.2,-0.5);\draw (-3.5,-1.4)--(-4.9,-0.5);
\draw (-3.5,-1.4)--(-5.6,-0.5);\draw (-3.5,-1.4)--(-6.3,-0.5);
\draw (0.7,1)--(3.5,1.9);\draw (1.4,1)--(3.5,1.9);\draw (2.1,1)--(3.5,1.9);
\draw (2.8,1)--(3.5,1.9);\draw (4.2,1)--(3.5,1.9);\draw (4.9,1)--(3.5,1.9);
\draw (5.6,1)--(3.5,1.9);\draw (6.3,1) ..controls+(-0.1,0.2)and+(0.4,-0.1).. (3.5,1.9);
\draw (3.5,-1.4)--(0.7,-0.5);\draw (3.5,-1.4)--(1.4,-0.5);\draw (3.5,-1.4)--(2.1,-0.5);
\draw (3.5,-1.4)--(2.8,-0.5);\draw (3.5,-1.4)--(4.2,-0.5);\draw (3.5,-1.4)--(4.9,-0.5);
\draw (3.5,-1.4)--(5.6,-0.5);\draw (3.5,-1.4)--(6.3,-0.5);
\draw (0.7,1)--(0.7,-0.5);\draw (1.4,1)--(1.4,-0.5);\draw (2.1,1)--(2.1,-0.5);
\draw (2.8,1)--(2.8,-0.5);\draw (4.2,1)--(4.2,-0.5);\draw (4.9,1)--(4.9,-0.5);
\draw (5.6,1)--(5.6,-0.5);\draw (6.3,1)--(6.3,-0.5);
\draw (-0.7,1)--(-0.7,-0.5);\draw (-1.4,1)--(-1.4,-0.5);\draw (-2.1,1)--(-2.1,-0.5);
\draw (-2.8,1)--(-2.8,-0.5);\draw (-4.2,1)--(-4.2,-0.5);\draw (-4.9,1)--(-4.9,-0.5);
\draw (-5.6,1)--(-5.6,-0.5);\draw (-6.3,1)--(-6.3,-0.5);
\end{tikzpicture}
\caption{The graph $\widehat{G}_{p+1}$ in Case 1}
\label{figure 2}
\end{center}
\end{figure}
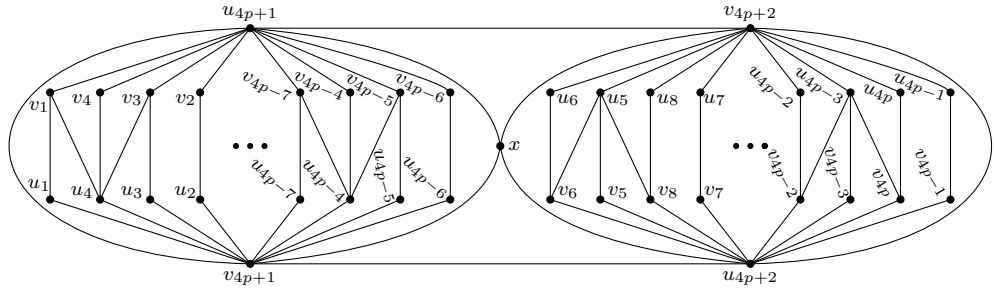

A planar decomposition $\{\widehat{G}_1,\dots,\widehat{G}_{p+1}\}$ of $K_{1,4p+2,4p+2}$ is obtained as above in this case.
In Figure \ref{figure 12}, we draw the planar decomposition of $K_{1,18,18}$, it is the smallest example for the Case $1$.  We denote vertex $u_i$ and $v_i$ by $i$ and $i'$ respectively in this figure.

\begin{center}
\begin{tikzpicture}
[scale=0.85]
\tikzstyle{every node}=[font=\tiny,scale=0.7]
\tiny
[inner sep=0pt]
\filldraw [black] (0.5,0) circle (1.3pt);\draw  (0.5,-0.18)  node { $1$};
\filldraw [black](3.6,0) circle (1.3pt);\draw  (3.6,-0.18)  node { $2$};
\filldraw [black](1.9,-1.35) circle (1.3pt);\draw  (1.95,-1.5)  node { $8'$};
\filldraw [black](1.9,-1) circle (1.3pt);\draw  (1.95,-1.16)  node { $6'$};
\filldraw [black](1.9,-0.6) circle (1.3pt);\draw  (1.95,-0.73)  node { $16'$};
\filldraw [black](1.9,-0.3) circle (1.3pt);\draw  (2.07,-0.29)  node { $17$};
\filldraw [black] (1.9,0) circle (1.3pt);\draw  (1.95,0.15)  node { $14'$};
\filldraw [black] (1.9,0.3) circle (1.3pt);\draw  (2.1,0.43)  node { $12'$};
\filldraw [black](1.9,0.6) circle (1.3pt);\draw  (1.7,0.6)  node { $18$};
\filldraw [black](1.9,0.9) circle (1.3pt);\draw  (1.65,0.9)  node { $10'$};

\filldraw [black] (-0.5,0) circle (1.3pt);\draw  (-0.54,-0.2)  node { $4$};
\filldraw [black](-3.6,0) circle (1.3pt);\draw  (-3.6,-0.2)  node { $3$};
 \filldraw [black](-1.9,0.9) circle (1.3pt);\draw  (-1.9,1.07)  node { $5'$};
 \filldraw [black] (-1.9,0.45) circle (1.3pt);\draw  (-1.9,0.61)  node { $7'$};
  \filldraw [black] (-1.9,0.1) circle (1.3pt);\draw  (-1.9,0.27)  node { $9'$};
 \filldraw [black](-1.9,-0.2) circle (1.3pt);\draw  (-1.9,-0.035)  node { $11'$};
 \filldraw [black] (-1.9,-0.55) circle (1.3pt);\draw  (-1.9,-0.39)  node { $13'$};
   \filldraw [black] (-1.9,-0.9) circle (1.3pt);\draw  (-1.9,-0.725)  node { $15'$};
\filldraw [black](0,-0.5) circle (1.3pt);\draw  (0.18,-0.50)  node { $2'$};
\filldraw [black](0,-3.6) circle (1.3pt);\draw  (0.2,-3.6)  node { $4'$};
\filldraw [black] (0.9,-1.9) circle (1.3pt);\draw  (0.97,-1.74)  node { $16$};
 \filldraw [black] (0.6,-1.9) circle (1.3pt);\draw  (0.6,-2.05)  node { $17'$};
\filldraw [black] (0.3,-1.9) circle (1.3pt);\draw  (0.41,-1.74)  node { $15$};
\filldraw [black] (0,-1.9) circle (1.3pt);\draw(-0.15,-2.04)  node { $12$};
\filldraw [black] (-0.3,-1.9) circle (1.3pt);\draw  (-0.3,-1.74)  node { $18'$};
 \filldraw [black] (-0.6,-1.9) circle (1.3pt);\draw  (-0.7,-1.74)  node { $11$};
\filldraw [black](-1,-1.9) circle (1.3pt);\draw  (-1.02,-1.74)  node { $8$};
\filldraw [black](-1.4,-1.9) circle (1.3pt);\draw  (-1.42,-1.74)  node { $7$};
\filldraw [black] (0,0.5) circle (1.3pt);\draw  (0.25,0.5)  node { $3'$};
\filldraw [black] (0,3.6) circle (1.3pt);\draw  (0.2, 3.65)  node { $1'$};
\filldraw [black](0.9, 1.9) circle (1.3pt);\draw  (1.07,1.9)  node { $14$};
\filldraw [black](0.45, 1.9) circle (1.3pt);\draw  (0.62,1.9)  node { $13$};
\filldraw [black](0.1, 1.9) circle (1.3pt);\draw  (0.24,1.9)  node { $9$};
\filldraw [black](-0.2,1.9) circle (1.3pt);\draw  (-0.06,1.9)  node { $6$};
\filldraw [black](-0.55, 1.9) circle (1.3pt);\draw  (-0.40,1.9)  node { $5$};
\filldraw [black](-0.9, 1.9) circle (1.3pt);\draw  (-0.74,1.9)  node { $10$};
\draw (3.6,0)--(0,-3.6)--(-3.6,0)--(0,3.6)--(3.6,0);
\draw (0,-0.5)-- (0.5,0)--(0,0.5);\draw (-0.5,0)--(0,-0.5);
\draw  (0,-0.5)..controls+(-1.7,-1.3)and+(0.5,-0.4)..(-3.6,0);
\draw  (0.5,0)..controls+(1.7,-1.3)and+(0.5,0.6)..(0, -3.6);
\draw  (0,0.5)..controls+(1.7,1.3)and+(-0.5,0.4)..(3.6,0);
\draw (0, -3.6)--(-1.4,-1.9)--(0,-0.5);\draw (0, -3.6)--(-1,-1.9)--(0,-0.5);
\draw (0, -3.6)--(-0.6,-1.9)--(0,-0.5);\draw (0, -3.6)--(0,-1.9)--(0,-0.5);
\draw (0, -3.6)--(0.3,-1.9)--(0,-0.5);\draw (0, -3.6)--(0.9,-1.9)--(0,-0.5);
\draw (-0.6,-1.9)--(0,-1.9);\draw (0.3,-1.9)--(0.9,-1.9);
\draw (0, 3.6)--(0.9,1.9)--(0,0.5);\draw (0, 3.6)--(0.45,1.9)--(0,0.5);
\draw (0, 3.6)--(0.1,1.9)--(0,0.5);\draw (0, 3.6)--(-0.2,1.9)--(0,0.5);
\draw (0, 3.6)--(-0.55,1.9)--(0,0.5);\draw (0, 3.6)--(-0.9,1.9)--(0,0.5);
\draw (-3.6 ,0)--(-1.9,0.9)--(-0.5,0);\draw (-3.6 ,0 )--(-1.9 ,0.45)--(-0.5, 0);
\draw (-3.6 ,0)--(-1.9,0.1)--(-0.5,0);\draw (-3.6 ,0 )--(-1.9 ,-0.2)--(-0.5, 0);
\draw (-3.6 ,0)--(-1.9,-0.55)--(-0.5,0);\draw (-3.6 ,0 )--(-1.9 ,-0.9)--(-0.5, 0);
\draw (3.6 ,0 )--(1.9 ,0.9)--(0.5, 0);\draw (3.6 ,0 )--(1.9 ,0.3)--(0.5, 0);
\draw (3.6 ,0)--(1.9,0)--(0.5,0);\draw (3.6 ,0)--(1.9,-0.6)--(0.5,0);
\draw (3.6 ,0 )--(1.9 ,-1)--(0.5, 0);\draw (3.6 ,0 )--(1.9 ,-1.35)--(0.5, 0);
\draw (1.9,0.9)--(1.9,0.3);\draw (1.9,0)--(1.9,-0.6);
\filldraw [black] (-1.2,1.1) circle (1.3pt);
\draw  (-1.28,0.87)  node { $x$};
\draw  (-1.2,1.1)--(-1.9,0.9);\draw  (-1.2,1.1)--(0 ,0.5);
\draw  (-1.2,1.1)--(-0.5, 0 );\draw  (-1.2,1.1)--(-0.9,1.9 );
\draw  (-1.2,1.1)..controls+(0.2,-0.3)and+(-0.1,0.5)..(0,-0.5);
\draw  (-1.2,1.1)..controls+(0.2,-0.3)and+(-0.5,0.1)..(0.5,0);
\draw  (-1.2,1.1)..controls+(-0.2,0.7)and+(-0.01,-0.01)..(0,3.6);
\draw  (-1.2,1.1)..controls+(-0.5,0.7)and+(0.4,0.3)..(-3.6,0);
\begin{scope}[xshift=8.2cm]
\filldraw [black] (-0.5,0) circle (1.3pt);\draw  (-0.5,0.18)  node { $8$};
\filldraw [black](-3.6,0) circle (1.3pt);\draw  (-3.58,0.18)  node { $7$};
\filldraw [black](-1.9,1.2) circle (1.3pt);\draw  (-1.98,1.3)  node { $1'$};
\filldraw [black](-1.9,0.9) circle (1.3pt);\draw  (-1.98,1)  node { $3'$};
\filldraw [black](-1.9,0.6) circle (1.3pt);\draw  (-2.05,0.67)  node { $13'$};
\filldraw [black](-1.9,0.3) circle (1.3pt);\draw  (-2.08,0.3)  node { $17$};
\filldraw [black] (-1.9,0) circle (1.3pt);\draw  (-1.7,0.13)  node { $15'$};
\filldraw [black] (-1.9,-0.3) circle (1.3pt);\draw  (-1.78,-0.15)  node { $9'$};
\filldraw [black](-1.9,-0.6) circle (1.3pt);\draw  (-2.09,-0.6)  node { $18$};
\filldraw [black](-1.9,-0.9) circle (1.3pt);\draw  (-1.66,-0.91)  node { $11'$};
\filldraw [black] (0.5,0) circle (1.3pt);\draw  (0.54,0.16)  node { $5$};
\filldraw [black](3.6,0) circle (1.3pt);\draw  (3.6,0.16)  node { $6$};
 \filldraw [black](1.9,-0.9) circle (1.3pt);\draw  (1.9,-1.05)  node { $12'$};
 \filldraw [black] (1.9,-0.45) circle (1.3pt);\draw  (1.9,-0.61)  node { $16'$};
  \filldraw [black] (1.9,-0.1) circle (1.3pt);\draw  (1.9,-0.25)  node { $14'$};
 \filldraw [black](1.9,0.2) circle (1.3pt);\draw  (1.9,0.06)  node { $10'$};
 \filldraw [black] (1.9,0.55) circle (1.3pt);\draw  (1.9,0.39)  node { $4'$};
   \filldraw [black] (1.9,0.9) circle (1.3pt);\draw  (1.9,0.76)  node { $2'$};
\filldraw [black](0,0.5) circle (1.3pt);\draw  (-0.18,0.53)  node { $7'$};
\filldraw [black](0,3.6) circle (1.3pt);\draw  (0.2,3.63)  node { $5'$};
\filldraw [black] (-0.9,1.9) circle (1.3pt);\draw  (-0.99,1.74)  node { $13$};
 \filldraw [black] (-0.6,1.9) circle (1.3pt);\draw  (-0.56,2.07)  node { $17'$};
\filldraw [black] (-0.3,1.9) circle (1.3pt);\draw  (-0.41,1.74)  node { $14$};
\filldraw [black] (-0,1.9) circle (1.3pt);\draw  (-0.10,1.74)  node { $9$};
\filldraw [black] (0.3,1.9) circle (1.3pt);\draw  (0.3,1.74)  node { $18'$};
 \filldraw [black] (0.6,1.9) circle (1.3pt);\draw  (0.77,1.9)  node { $10$};
\filldraw [black](1,1.9) circle (1.3pt);\draw  (1.03,1.74)  node { $1$};
\filldraw [black](1.4,1.9) circle (1.3pt);\draw  (1.43,1.74)  node { $2$};
\filldraw [black] (0,-0.5) circle (1.3pt);\draw  (-0.23,-0.5)  node { $6'$};
\filldraw [black] (0,-3.6) circle (1.3pt);\draw  (0.2, -3.6)  node { $8'$};
\filldraw [black](-1.2, -1.9) circle (1.3pt);\draw  (-1.35,-1.9)  node { $3$};
\filldraw [black](-0.8, -1.9) circle (1.3pt);\draw  (-0.93,-1.9)  node { $4$};
\filldraw [black](-0.4, -1.9) circle (1.3pt);\draw  (-0.56,-1.9)  node { $11$};
\filldraw [black](0,-1.9) circle (1.3pt);\draw  (-0.18,-1.9)  node { $12$};
\filldraw [black](0.45, -1.9) circle (1.3pt);\draw  (0.24,-1.9)  node { $16$};
\filldraw [black](0.9, -1.9) circle (1.3pt);\draw  (0.7,-1.9)  node { $15$};
\draw (-3.6,0)--(0,3.6)--(3.6,0)--(0,-3.6)--(-3.6,0);
\draw (0,0.5)-- (-0.5,0)--(0,-0.5);\draw (0.5,0)--(0,0.5);
\draw  (0,0.5)..controls+(1.7,1.3)and+(-0.5,0.4)..(3.6,0);
\draw  (-0.5,0)..controls+(-1.7,1.3)and+(-0.5,-0.6)..(0, 3.6);
\draw  (0,-0.5)..controls+(-1.9,-1.3)and+(0.5,-0.4)..(-3.6,0);
\draw (0, 3.6)--(1.4,1.9)--(0,0.5);\draw (0, 3.6)--(1,1.9)--(0,0.5);
\draw (0, 3.6)--(0.6,1.9)--(0,0.5);\draw (0, 3.6)--(0,1.9)--(0,0.5);
\draw (0, 3.6)--(-0.3,1.9)--(0,0.5);\draw (0, 3.6)--(-0.9,1.9)--(0,0.5);
\draw (0.6,1.9)--(0,1.9);\draw (-0.3,1.9)--(-0.9,1.9);
\draw (0, -3.6)--(-1.2,-1.9)--(0,-0.5);\draw (0, -3.6)--(-0.8,-1.9)--(0,-0.5);
\draw (0, -3.6)--(-0.4,-1.9)--(0,-0.5);\draw (0, -3.6)--(0,-1.9)--(0,-0.5);
\draw (0, -3.6)--(0.45,-1.9)--(0,-0.5);\draw (0, -3.6)--(0.9,-1.9)--(0,-0.5);
\draw (3.6 ,0)--(1.9,-0.9)--(0.5,0);\draw (3.6 ,0 )--(1.9 ,-0.45)--(0.5, 0);
\draw (3.6 ,0)--(1.9,-0.1)--(0.5,0);\draw (3.6 ,0 )--(1.9 ,0.2)--(0.5, 0);
\draw (3.6 ,0)--(1.9,0.55)--(0.5,0);\draw (3.6 ,0 )--(1.9 ,0.9)--(0.5, 0);
\draw (-3.6 ,0 )--(-1.9 ,-0.9)--(-0.5, 0);\draw (-3.6 ,0 )--(-1.9 ,-0.3)--(-0.5, 0);
\draw (-3.6 ,0)--(-1.9,0)--(-0.5,0);\draw (-3.6 ,0)--(-1.9,0.6)--(-0.5,0);
\draw (-3.6 ,0 )--(-1.9 ,0.9)--(-0.5, 0);\draw (-3.6 ,0 )--(-1.9 ,1.2)--(-0.5, 0);
\draw (-1.9,-0.9)--(-1.9,-0.3);\draw (-1.9,0)--(-1.9,0.6);
\filldraw [black] (1.2,-1.1) circle (1.3pt);
\draw  (1.28,-0.87)  node { $x$};
\draw  (1.2,-1.1)--(1.9,-0.9);\draw  (1.2,-1.1)--(0 ,-0.5);
\draw  (1.2,-1.1)--(0.5, 0 );\draw  (1.2,-1.1)--(0.9,-1.9 );
\draw  (1.2,-1.1)..controls+(-0.2,0.3)and+(0.1,-0.5)..(0,0.5);
\draw  (1.2,-1.1)..controls+(-0.2,0.3)and+(0.5,-0.1)..(-0.5,0);
\draw  (1.2,-1.1)..controls+(0.2,-0.7)and+(0.01,0.01)..(0,-3.6);
\draw  (1.2,-1.1)..controls+(0.5,-0.7)and+(-0.4,-0.3)..(3.6,0);
\end{scope}
\end{tikzpicture}
\vskip-0.03cm    (a)  The graph $\widehat{G}_1$\  \ \ \ \ \ \  \ \ \  \ \  \  \  \ \ \ \  \ \ \ \ \  \ \ \     (b)  The graph $\widehat{G}_2$
\end{center}

\begin{figure}[H]
\begin{center}
\begin{tikzpicture}
[scale=0.85]
\tikzstyle{every node}=[font=\tiny,scale=0.7]
\tiny
[inner sep=0pt]
\filldraw [black] (0.5,0) circle (1.3pt);\draw  (0.5,-0.18)  node { $9$};
\filldraw [black](3.6,0) circle (1.3pt);\draw  (3.62,-0.22)  node { $10$};
\filldraw [black](1.9,-1.4) circle (1.3pt);\draw  (1.85,-1.55)  node { $16'$};
\filldraw [black](1.9,-1) circle (1.3pt);\draw  (1.95,-1.15)  node { $14'$};
\filldraw [black](1.9,-0.6) circle (1.3pt);\draw  (1.95,-0.75)  node { $8'$};
\filldraw [black](1.9,-0.3) circle (1.3pt);\draw  (2.07,-0.3)  node { $17$};
\filldraw [black] (1.9,0) circle (1.3pt);\draw  (2.04,0.13)  node { $6'$};
\filldraw [black] (1.9,0.3) circle (1.3pt);\draw  (2.06,0.4)  node { $4'$};
\filldraw [black](1.9,0.6) circle (1.3pt);\draw  (1.71,0.6)  node { $18$};
\filldraw [black](1.9,0.9) circle (1.3pt);\draw  (1.7,0.9)  node { $2'$};

\filldraw [black] (-0.5,0) circle (1.3pt);\draw  (-0.54,-0.2)  node { $12$};
\filldraw [black](-3.6,0) circle (1.3pt);\draw  (-3.6,-0.25)  node { $11$};
 \filldraw [black](-1.9,0.9) circle (1.3pt);\draw  (-1.9,1.07)  node { $13'$};
 \filldraw [black] (-1.9,0.45) circle (1.3pt);\draw  (-1.9,0.64)  node { $1'$};
  \filldraw [black] (-1.9,0.1) circle (1.3pt);\draw  (-1.9,0.28)  node { $3'$};
 \filldraw [black](-1.9,-0.2) circle (1.3pt);\draw  (-1.9,-0.04)  node { $5'$};
 \filldraw [black] (-1.9,-0.55) circle (1.3pt);\draw  (-1.9,-0.39)  node { $7'$};
   \filldraw [black] (-1.9,-0.9) circle (1.3pt);\draw  (-1.85,-0.72)  node { $15'$};
\filldraw [black](0,-0.5) circle (1.3pt);\draw  (0.24,-0.51)  node { $10'$};
\filldraw [black](0,-3.6) circle (1.3pt);\draw  (0.24,-3.6)  node { $12'$};
\filldraw [black] (0.9,-1.9) circle (1.3pt);\draw  (0.93,-1.74)  node { $8$};
 \filldraw [black] (0.6,-1.9) circle (1.3pt);\draw  (0.6,-2.07)  node { $17'$};
\filldraw [black] (0.3,-1.9) circle (1.3pt);\draw  (0.38,-1.74)  node { $7$};
\filldraw [black] (0,-1.9) circle (1.3pt);\draw  (0.11,-1.74)  node { $4$};
\filldraw [black] (-0.3,-1.9) circle (1.3pt);\draw  (-0.3,-1.72)  node { $18'$};
 \filldraw [black] (-0.6,-1.9) circle (1.3pt);\draw  (-0.67,-1.74)  node { $3$};
\filldraw [black](-1,-1.9) circle (1.3pt);\draw  (-1.06,-1.74)  node { $16$};
\filldraw [black](-1.4,-1.9) circle (1.3pt);\draw  (-1.48,-1.74)  node { $15$};
\filldraw [black] (0,0.5) circle (1.3pt);\draw  (0.3,0.49)  node { $11'$};
\filldraw [black] (0,3.6) circle (1.3pt);\draw  (0.2, 3.65)  node { $9'$};
\filldraw [black](0.9, 1.9) circle (1.3pt);\draw  (1.08,1.9)  node { $14$};
\filldraw [black](0.45, 1.9) circle (1.3pt);\draw  (0.64,1.9)  node { $13$};
\filldraw [black](0.1, 1.9) circle (1.3pt);\draw  (0.25,1.9)  node { $6$};
\filldraw [black](-0.2,1.9) circle (1.3pt);\draw  (-0.06,1.9)  node { $5$};
\filldraw [black](-0.55, 1.9) circle (1.3pt);\draw  (-0.41,1.9)  node { $1$};
\filldraw [black](-0.9, 1.9) circle (1.3pt);\draw  (-0.74,1.9)  node { $2$};
\draw (3.6,0)--(0,-3.6)--(-3.6,0)--(0,3.6)--(3.6,0);
\draw (0,-0.5)-- (0.5,0)--(0,0.5);\draw (-0.5,0)--(0,-0.5);
\draw  (0,-0.5)..controls+(-1.7,-1.3)and+(0.5,-0.4)..(-3.6,0);
\draw  (0.5,0)..controls+(1.7,-1.3)and+(0.5,0.6)..(0, -3.6);
\draw  (0,0.5)..controls+(1.7,1.3)and+(-0.5,0.4)..(3.6,0);
\draw (0, -3.6)--(-1.4,-1.9)--(0,-0.5);\draw (0, -3.6)--(-1,-1.9)--(0,-0.5);
\draw (0, -3.6)--(-0.6,-1.9)--(0,-0.5);\draw (0, -3.6)--(0,-1.9)--(0,-0.5);
\draw (0, -3.6)--(0.3,-1.9)--(0,-0.5);\draw (0, -3.6)--(0.9,-1.9)--(0,-0.5);
\draw (-0.6,-1.9)--(0,-1.9);\draw (0.3,-1.9)--(0.9,-1.9);
\draw (0, 3.6)--(0.9,1.9)--(0,0.5);\draw (0, 3.6)--(0.45,1.9)--(0,0.5);
\draw (0, 3.6)--(0.1,1.9)--(0,0.5);\draw (0, 3.6)--(-0.2,1.9)--(0,0.5);
\draw (0, 3.6)--(-0.55,1.9)--(0,0.5);\draw (0, 3.6)--(-0.9,1.9)--(0,0.5);
\draw (-3.6 ,0)--(-1.9,0.9)--(-0.5,0);\draw (-3.6 ,0 )--(-1.9 ,0.45)--(-0.5, 0);
\draw (-3.6 ,0)--(-1.9,0.1)--(-0.5,0);\draw (-3.6 ,0 )--(-1.9 ,-0.2)--(-0.5, 0);
\draw (-3.6 ,0)--(-1.9,-0.55)--(-0.5,0);\draw (-3.6 ,0 )--(-1.9 ,-0.9)--(-0.5, 0);
\draw (3.6 ,0 )--(1.9 ,0.9)--(0.5, 0);\draw (3.6 ,0 )--(1.9 ,0.3)--(0.5, 0);
\draw (3.6 ,0)--(1.9,0)--(0.5,0);\draw (3.6 ,0)--(1.9,-0.6)--(0.5,0);
\draw (3.6 ,0 )--(1.9 ,-1)--(0.5, 0);\draw (3.6 ,0 )--(1.9 ,-1.4)--(0.5, 0);
\draw (1.9,0.9)--(1.9,0.3);\draw (1.9,0)--(1.9,-0.6);
\filldraw [black] (-1.2,1.1) circle (1.3pt);
\draw  (-1.28,0.87)  node { $x$};
\draw  (-1.2,1.1)--(-1.9,0.9);\draw  (-1.2,1.1)--(0 ,0.5);
\draw  (-1.2,1.1)--(-0.5, 0 );\draw  (-1.2,1.1)--(-0.9,1.9 );
\draw  (-1.2,1.1)..controls+(0.2,-0.3)and+(-0.1,0.5)..(0,-0.5);
\draw  (-1.2,1.1)..controls+(0.2,-0.3)and+(-0.5,0.1)..(0.5,0);
\draw  (-1.2,1.1)..controls+(-0.2,0.7)and+(-0.01,-0.01)..(0,3.6);
\draw  (-1.2,1.1)..controls+(-0.5,0.7)and+(0.4,0.3)..(-3.6,0);
\begin{scope}[xshift=8.2cm]
\filldraw [black] (-0.5,0) circle (1.3pt);\draw  (-0.48,0.22)  node { $16$};
\filldraw [black](-3.6,0) circle (1.3pt);\draw  (-3.6,0.22)  node { $15$};
\filldraw [black](-1.9,1.4) circle (1.3pt);\draw  (-1.72,1.5)  node { $9'$};
\filldraw [black](-1.9,1) circle (1.3pt);\draw  (-1.93,1.16)  node { $11'$};
\filldraw [black](-1.9,0.6) circle (1.3pt);\draw  (-1.95,0.75)  node { $7'$};
\filldraw [black](-1.9,0.3) circle (1.3pt);\draw  (-1.71,0.3)  node { $17$};
\filldraw [black] (-1.9,0) circle (1.3pt);\draw  (-2.05,0.13)  node { $5'$};
\filldraw [black] (-1.9,-0.3) circle (1.3pt);\draw  (-1.88,-0.15)  node { $3'$};
\filldraw [black](-1.9,-0.6) circle (1.3pt);\draw  (-2.11,-0.6)  node { $18$};
\filldraw [black](-1.9,-0.9) circle (1.3pt);\draw  (-1.66,-0.9)  node { $1'$};
\filldraw [black] (0.5,0) circle (1.3pt);\draw  (0.51,0.2)  node { $13$};
\filldraw [black](3.6,0) circle (1.3pt);\draw  (3.63,0.2)  node { $14$};
 \filldraw [black](1.9,-0.9) circle (1.3pt);\draw  (1.93,-1.05)  node { $4'$};
 \filldraw [black] (1.9,-0.55) circle (1.3pt);\draw  (1.93,-0.7)  node { $12'$};
  \filldraw [black] (1.9,-0.2) circle (1.3pt);\draw  (1.93,-0.35)  node { $10'$};
 \filldraw [black](1.9,0.15) circle (1.3pt);\draw  (1.93,-0.01)  node { $8'$};
 \filldraw [black] (1.9,0.55) circle (1.3pt);\draw  (1.93,0.39)  node { $6'$};
   \filldraw [black] (1.9,0.9) circle (1.3pt);\draw  (1.9,0.75)  node { $2'$};
\filldraw [black](0,0.5) circle (1.3pt);\draw  (-0.24,0.5)  node { $15'$};
\filldraw [black](0,3.6) circle (1.3pt);\draw  (0.22,3.65)  node { $13'$};
\filldraw [black] (-0.9,1.9) circle (1.3pt);\draw  (-0.96,1.74)  node { $5$};
 \filldraw [black] (-0.6,1.9) circle (1.3pt);\draw  (-0.58,2.08)  node { $17'$};
\filldraw [black] (-0.3,1.9) circle (1.3pt);\draw  (-0.4,1.74)  node { $6$};
\filldraw [black] (-0,1.9) circle (1.3pt);\draw  (-0.1,1.74)  node { $1$};
\filldraw [black] (0.3,1.9) circle (1.3pt);\draw  (0.3,1.74)  node { $18'$};
 \filldraw [black] (0.6,1.9) circle (1.3pt);\draw  (0.65,1.74)  node { $2$};
\filldraw [black](0.9,1.9) circle (1.3pt);\draw  (0.93,1.74)  node { $9$};
\filldraw [black](1.2,1.9) circle (1.3pt);\draw  (1.29,1.74)  node { $10$};
\filldraw [black] (0,-0.5) circle (1.3pt);\draw  (-0.3,-0.49)  node { $14'$};
\filldraw [black] (0,-3.6) circle (1.3pt);\draw  (0.23, -3.6)  node { $16'$};
\filldraw [black](-1.05, -1.9) circle (1.3pt);\draw  (-1.2,-1.9)  node { $3$};
\filldraw [black](-0.65, -1.9) circle (1.3pt);\draw  (-0.77,-1.9)  node { $4$};
\filldraw [black](-0.25, -1.9) circle (1.3pt);\draw  (-0.37,-1.9)  node { $8$};
\filldraw [black](0.15,-1.9) circle (1.3pt);\draw  (-0.03,-1.9)  node { $11$};
\filldraw [black](0.55, -1.9) circle (1.3pt);\draw  (0.35,-1.9)  node { $12$};
\filldraw [black](0.9, -1.9) circle (1.3pt);\draw  (0.74,-1.9)  node { $7$};
\draw (-3.6,0)--(0,3.6)--(3.6,0)--(0,-3.6)--(-3.6,0);
\draw (0,0.5)-- (-0.5,0)--(0,-0.5);\draw (0.5,0)--(0,0.5);
\draw  (0,0.5)..controls+(1.7,1.3)and+(-0.5,0.4)..(3.6,0);
\draw  (-0.5,0)..controls+(-1.5,1.8)and+(-0.2,-0.4)..(0, 3.6);
\draw  (0,-0.5)..controls+(-1.7,-1.3)and+(0.5,-0.4)..(-3.6,0);
\draw (0, 3.6)--(1.2,1.9)--(0,0.5);\draw (0, 3.6)--(0.9,1.9)--(0,0.5);
\draw (0, 3.6)--(0.6,1.9)--(0,0.5);\draw (0, 3.6)--(0,1.9)--(0,0.5);
\draw (0, 3.6)--(-0.3,1.9)--(0,0.5);\draw (0, 3.6)--(-0.9,1.9)--(0,0.5);
\draw (0.6,1.9)--(0,1.9);\draw (-0.3,1.9)--(-0.9,1.9);
\draw (0, -3.6)--(-1.05,-1.9)--(0,-0.5);\draw (0, -3.6)--(-0.65,-1.9)--(0,-0.5);
\draw (0, -3.6)--(-0.25,-1.9)--(0,-0.5);\draw (0, -3.6)--(0.15,-1.9)--(0,-0.5);
\draw (0, -3.6)--(0.55,-1.9)--(0,-0.5);\draw (0, -3.6)--(0.9,-1.9)--(0,-0.5);
\draw (3.6 ,0)--(1.9,-0.9)--(0.5,0);\draw (3.6 ,0 )--(1.9 ,-0.55)--(0.5, 0);
\draw (3.6 ,0)--(1.9,-0.2)--(0.5,0);\draw (3.6 ,0 )--(1.9 ,0.15)--(0.5, 0);
\draw (3.6 ,0)--(1.9,0.55)--(0.5,0);\draw (3.6 ,0 )--(1.9 ,0.9)--(0.5, 0);
\draw (-3.6 ,0 )--(-1.9 ,-0.9)--(-0.5, 0);\draw (-3.6 ,0 )--(-1.9 ,-0.3)--(-0.5, 0);
\draw (-3.6 ,0)--(-1.9,0)--(-0.5,0);\draw (-3.6 ,0)--(-1.9,0.6)--(-0.5,0);
\draw (-3.6 ,0 )--(-1.9 ,1)--(-0.5, 0);\draw (-3.6 ,0 )--(-1.9 ,1.4)--(-0.5, 0);
\draw (-1.9,-0.9)--(-1.9,-0.3);\draw (-1.9,0)--(-1.9,0.6);
\filldraw [black] (1.2,-1.1) circle (1.3pt);
\draw  (1.28,-0.87)  node { $x$};
\draw  (1.2,-1.1)--(1.9,-0.9);\draw  (1.2,-1.1)--(0 ,-0.5);
\draw  (1.2,-1.1)--(0.5, 0 );\draw  (1.2,-1.1)--(0.9,-1.9 );
\draw  (1.2,-1.1)..controls+(-0.2,0.3)and+(0.1,-0.5)..(0,0.5);
\draw  (1.2,-1.1)..controls+(-0.2,0.3)and+(0.5,-0.1)..(-0.5,0);
\draw  (1.2,-1.1)..controls+(0.2,-0.7)and+(0.01,0.01)..(0,-3.6);
\draw  (1.2,-1.1)..controls+(0.5,-0.7)and+(-0.4,-0.3)..(3.6,0);
\end{scope}
\end{tikzpicture}
\vskip-0.01cm    (c)  The graph $\widehat{G}_3$\  \ \ \ \ \ \  \  \ \ \ \ \  \ \ \ \ \ \ \ \ \ \  \  \ \ \ \  \ \ \ \ \  \ \ \    (d)  The graph $\widehat{G}_4$
\end{center}
\end{figure}

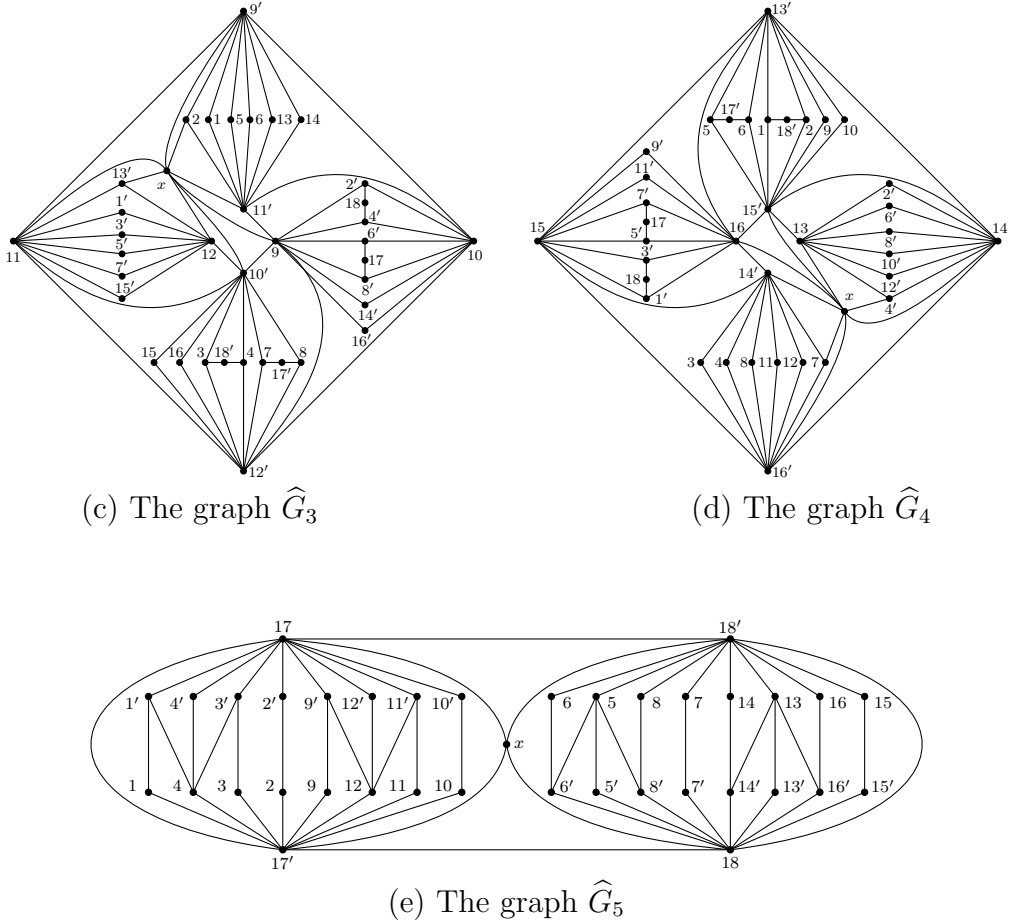
\begin{figure}[H]
\begin{center}
\begin{tikzpicture}
[scale=0.85]
\tikzstyle{every node}=[font=\tiny,scale=0.8]
\tiny
[inner sep=0pt]
\filldraw [black] (0.7,1) circle (1.4pt)
                  (0.7,-0.5) circle (1.4pt)
                  (1.4,1) circle (1.4pt)
                  (1.4,-0.5) circle (1.4pt)
                   (2.1,1) circle (1.4pt)
                  (2.1,-0.5) circle (1.4pt)
                  (2.8,1) circle (1.4pt)
                  (2.8,-0.5) circle (1.4pt)
                  (3.5,1) circle (1.4pt)
                  (3.5,-0.5) circle (1.4pt)
                  (4.2,1) circle (1.4pt)
                  (4.2,-0.5) circle (1.4pt)
                  (4.9,1) circle (1.4pt)
                  (4.9,-0.5) circle (1.4pt)
                  (5.6,1) circle (1.4pt)
                  (5.6,-0.5) circle (1.4pt);
\draw (3.5,-0.5)--(4.2,1)-- (4.9,-0.5) ;\draw   (0.7,-0.5)--(1.4,1)-- (2.1,-0.5);
\draw (0.95,0.9)  node { $6$};\draw  (0.95,-0.4)node { $6'$};
\draw (1.65,0.9)  node { $5$};\draw  (1.65,-0.4)node { $5'$};
\draw (2.35,0.9)  node { $8$};\draw  (2.35,-0.4)node { $8'$};
\draw (3,0.9)  node { $7$};\draw  (3,-0.4)node { $7'$};
\draw (3.75,0.9)  node { $14$};\draw  (3.8,-0.4)node { $14'$};
\draw (4.48,0.9)  node { $13$};\draw  (4.5,-0.4)node { $13'$};
\draw (5.18,0.9)  node { $16$};\draw  (5.2,-0.4)node { $16'$};
\draw (5.89,0.9)  node { $15$};\draw  (5.89,-0.4)node { $15'$};
\filldraw [black] (-0.7,1) circle (1.4pt)    (-0.7,-0.5) circle (1.4pt)      (-1.4,1) circle (1.4pt)
                  (-1.4,-0.5) circle (1.4pt)            (-2.1,1) circle (1.4pt)            (-2.1,-0.5) circle (1.4pt)
                  (-2.8,1) circle (1.4pt)               (-2.8,-0.5) circle (1.4pt)        (-3.5,1) circle (1.4pt)
                  (-3.5,-0.5) circle (1.4pt)          (-4.2,1) circle (1.4pt)             (-4.2,-0.5) circle (1.4pt)
                  (-5.6,1) circle (1.4pt)               (-5.6,-0.5) circle (1.4pt)         (-4.9,1) circle (1.4pt)
                  (-4.9,-0.5) circle (1.4pt);
\draw (-5.6,1)--(-4.9,-0.5)-- (-4.2,1) ;\draw  (-2.8,1)-- (-2.1,-0.5)--(-1.4,1);
\draw (-5.85,0.9)  node { $1'$};\draw  (-5.85,-0.4)node { $1$};
\draw (-5.15,0.9)  node { $4'$};\draw  (-5.15,-0.4)node { $4$};
\draw (-4.45,0.9)  node { $3'$};\draw  (-4.45,-0.4)node { $3$};
\draw (-3.7,0.9)  node { $2'$};\draw  (-3.7,-0.4)node { $2$};
\draw (-3.05,0.9)  node { $9'$};\draw  (-3.05,-0.4)node { $9$};
\draw (-2.4,0.9)  node { $12'$};\draw  (-2.4,-0.4)node { $12$};
\draw (-1.7,0.9)  node { $11'$};\draw  (-1.7,-0.4)node { $11$};
\draw (-0.99,0.9)  node { $10'$};\draw  (-0.99,-0.4)node { $10$};
\filldraw [black] (0,0.25) circle (1.4pt);
\draw (0.2,0.25) node { $x$};
\draw[-] (0,0.25)..controls+(0.2,1.3)and+(-1,-0.1)..(3.5,1.9);
\draw[-] (0,0.25)..controls+(-0.2,1.3)and+(1,-0.1)..(-3.5,1.9);
\draw[-] (0,0.25)..controls+(0.2,-1.3)and+(-1,0.1)..(3.5,-1.4);
\draw[-] (0,0.25)..controls+(-0.2,-1.3)and+(1,0.1)..(-3.5,-1.4);
\filldraw [black] (-3.5,1.9) circle (1.4pt)
                  (-3.5,-1.4) circle (1.4pt)
                  (3.5,1.9) circle (1.4pt)
                  (3.5,-1.4) circle (1.4pt);
 \draw[-] (-3.5,1.9)..controls+(-4,-0.5)and+(-4,0.5)..(-3.5,-1.4) ;
\draw[-] (3.5,1.9)..controls+(4,-0.5)and+(4,0.5)..(3.5,-1.4) ;
\draw  (-3.5,1.9)--(3.5,1.9); \draw   (-3.5,-1.4)--(3.5,-1.4);
\draw  (-3.5,2.1) node { $17$};\draw (-3.5,-1.6)node { $17'$};
\draw(3.5,2.1) node { $18'$};\draw(3.5,-1.6)  node { $18$};
\draw (-0.7,1)--(-3.5,1.9);\draw (-1.4,1)--(-3.5,1.9);\draw (-2.1,1)--(-3.5,1.9);
\draw (-2.8,1)--(-3.5,1.9);\draw (-4.2,1)--(-3.5,1.9);\draw (-4.9,1)--(-3.5,1.9);
\draw (-5.6,1)--(-3.5,1.9);\draw (-3.5,1)--(-3.5,1.9);
\draw (-3.5,-1.4)--(-0.7,-0.5);\draw (-3.5,-1.4)--(-1.4,-0.5);\draw (-3.5,-1.4)--(-2.1,-0.5);
\draw (-3.5,-1.4)--(-2.8,-0.5);\draw (-3.5,-1.4)--(-4.2,-0.5);\draw (-3.5,-1.4)--(-4.9,-0.5);
\draw (-3.5,-1.4)--(-5.6,-0.5);\draw (-3.5,-1.4)--(-3.5,-0.5);
\draw (0.7,1)--(3.5,1.9);\draw (1.4,1)--(3.5,1.9);\draw (2.1,1)--(3.5,1.9);
\draw (2.8,1)--(3.5,1.9);\draw (4.2,1)--(3.5,1.9);\draw (4.9,1)--(3.5,1.9);
\draw (5.6,1)--(3.5,1.9);\draw (3.5,1)--(3.5,1.9);
\draw (3.5,-1.4)--(0.7,-0.5);\draw (3.5,-1.4)--(1.4,-0.5);\draw (3.5,-1.4)--(2.1,-0.5);
\draw (3.5,-1.4)--(2.8,-0.5);\draw (3.5,-1.4)--(4.2,-0.5);\draw (3.5,-1.4)--(4.9,-0.5);
\draw (3.5,-1.4)--(5.6,-0.5);\draw (3.5,-1.4)--(3.5,-0.5);
\draw (0.7,1)--(0.7,-0.5);\draw (1.4,1)--(1.4,-0.5);\draw (2.1,1)--(2.1,-0.5);
\draw (2.8,1)--(2.8,-0.5);\draw (4.2,1)--(4.2,-0.5);\draw (4.9,1)--(4.9,-0.5);
\draw (5.6,1)--(5.6,-0.5);\draw (3.5,1)--(3.5,-0.5);
\draw (-0.7,1)--(-0.7,-0.5);\draw (-1.4,1)--(-1.4,-0.5);\draw (-2.1,1)--(-2.1,-0.5);
\draw (-2.8,1)--(-2.8,-0.5);\draw (-4.2,1)--(-4.2,-0.5);\draw (-4.9,1)--(-4.9,-0.5);
\draw (-5.6,1)--(-5.6,-0.5);\draw (-3.5,1)--(-3.5,-0.5);
\end{tikzpicture}
\vskip-0.01cm    (e)  The graph $\widehat{G}_5$
\caption{A planar decomposition of $K_{1,18,18}$}
\label{figure 12}
\end{center}
\end{figure}

{\bf Case 2.}~ When $p$ is odd and $p > 3$. The process is similar to that in Case 1.

\noindent{\bf (a)}~ The construction for  $\widehat{G}_r$, $1\leq r\leq p$, and $r$ is odd.

\noindent {\bf Step 1:}~~  Place the vertex $x$ in the face $1$ of $G_{r}$,  delete edges $v_{4r-3}u_{4r}$ and $u_{4r}v_{4r-1}$ from $G_r$, for $1\leq r\leq p$, and delete $v_2u_1$ from $G_1$ additionally.

For $1< r< p$,  do parallel paths modification to $G_{r}$, such that $u_{4r+6}\in U_1^r$, $v_{4r+1}\in V_1^r$ and $u_{4r-3},u_{4r-1},u_{4r},v_{4r-3},v_{4r-2},v_{4r-1}$ are incident with a common face which  the vertex $x$ is in. Join $x$ to $u_{4r-3},u_{4r-1},u_{4r},v_{4r-3},v_{4r-2},v_{4r-1}$ and $u_{4r+6}$, $v_{4r+1}$.

Similarly, in $G_1$, join $x$ to $u_{1},u_{3},u_{4},v_{1},v_{2},v_{3},v_4$ and $u_{10}\in U_1^1$, $v_{5}\in V_1^1$. In $G_{p}$, join $x$ to $u_{4p-3}$,$u_{4p-1}$,$u_{4p}$,$v_{4p-3}$,$v_{4p-2}$,$v_{4p-1}$ and $u_{2}\in U_1^p$.

\noindent {\bf Step 2:}~~  For $1\leq r< p$, do parallel paths modification to $G_{r}$, such that $u_{4r+11}$,  $u_{4r+12}\in U_2^r $, $u_{4r+7}, u_{4r+8}\in U_2^r $, $v_{4r+10}, v_{4r+12}\in V_2^r$ and $v_{4r+6}, v_{4r+8}\in V_2^r$ are incident with a common face, respectively. Join $v_{4p+1}$ to both $u_{4r+11}$ and $u_{4r+12}$, join $v_{4p+2}$ to both $u_{4r+7}$ and $u_{4r+8}$, join $u_{4p+1}$ to both $v_{4r+10}$ and $v_{4r+12}$, join $u_{4p+2}$ to both $v_{4r+6}$ and $v_{4r+8}$.

Similarly, in $G_p$, join $v_{4p+1}$ to $u_{5}, u_{6}\in U_1^p$, join $v_{4p+2}$ to $u_{7}, u_{8}\in U_2^p$, join $u_{4p+1}$ to $v_{6}, v_{8}\in V_2^p$, join $u_{4p+2}$ to $v_{5}, v_{7}\in V_1^p$.

\noindent{\bf (b)}~ The construction for $\widehat{G}_r$, $1\leq r\leq p$, and $r$ is even.

\noindent {\bf Step 1:}~~  Place the vertex $x$ in the face $3$ of $G_{r}$,  delete edges $v_{4r}u_{4r-3}$ and $u_{4r-3}v_{4r-2}$ from $G_r$, $1\leq r\leq p-1$.

Do parallel paths modification to $G_r, 1\leq r <p-1$, such that $u_{4r+7}\in U_2^r$, $v_{4r+4}\in V_2^r$ and $u_{4r-3},u_{4r-2},u_{4r},v_{4r-2},v_{4r-1},v_{4r}$ are incident with a common face which  the vertex $x$ is in. Join $x$ to $u_{4r-3},u_{4r-2},u_{4r},v_{4r-2},v_{4r-1},v_{4r}$ and $u_{4r+7}$, $v_{4r+4}$. Similarly, in $G_{p-1}$, join $x$ to $u_{4p-7},u_{4p-6},u_{4p-4},v_{4p-6},v_{4p-5},v_{4p-4}$ and $u_{7}\in U_2^{p-1}$, $v_{4p}\in V_2^{p-1}$.

\noindent {\bf Step 2:}~~  Do parallel paths modifications, such that $u_{4r+5}, u_{4r+6}\in U_1^r $, $u_{4r+1}, u_{4r+2}\in U_1^r $, $v_{4r+5}, v_{4r+7}\in V_1^r $, $v_{4r+1}, v_{4r+3}\in V_1^r $ are incident with a common face, respectively. Join $v_{4p+1}$ to both $u_{4r+5}$ and $u_{4r+6}$,  join $v_{4p+2}$ to both $u_{4r+1}$ and $u_{4r+2}$, join $u_{4p+1}$ to both $v_{4r+5}$ and $v_{4r+7}$, join $u_{4p+2}$ to both $v_{4r+1}$ and $v_{4r+3}$.

Table \ref{tab 3} shows how we add edges to $G_{r}(1\leq r\leq p)$ in Case 2.

\begin{table}[htbp]
\centering\caption {The edges we add to $G_{r} (1\leq r \leq p)$ in Case 2}\label{tab 3}\tiny
\renewcommand{\arraystretch}{1.1}
\begin{tabular}{|*{10}{c|}}
\hline
\multicolumn{2}{|c|}{\diagbox { \bahao edge}{\bahao subscript}{\bahao case} } &  \multicolumn{4}{|c|}{$r$ is odd}  & \multicolumn{4}{|c|}{$r$ is even} \\
\hline
\multicolumn{2}{|c|}{$xu_j$ } & \multicolumn{3}{|c|}{$4r-3,4r-1,4r$}
 & \tabincell{c}{$4r+6,r\neq p~(U_1^r)$ \\  $2,r=p~(U_1^r)$}
 & \multicolumn{3}{|c|}{$4r-3,4r-2,4r$}
 &  \tabincell{c}{ $4r+7,r\neq p-1~(U_2^r)$ \\ $7,r=p-1~(U_2^r)$}    \\
\hline
\multicolumn{2}{|c|}{$xv_j$ } & \multicolumn{3}{|c|}{$4r-3,4r-2,4r-1$} & \tabincell{c}{ $4,5,r=1$ \\ $4r+1,r\neq1, p~(V_1^r)$ }
& \multicolumn{3}{|c|}{$4r-2,4r-1,4r$}
&  $4r+4 ~(V_2^r)$
   \\
\hline
\multicolumn{2}{|c|}{$v_{4p+1}u_j$}
& \multicolumn{4}{|c|}{ \tabincell{c}{ $4r+11,4r+12,r\neq p~(U_2^r)$\\$5,6,r=p~(U_1^r)$}}
&\multicolumn{4}{|c|}{$4r+5,4r+6~(U_1^r)$}\\
\hline
\multicolumn{2}{|c|}{$v_{4p+2}u_j$}
& \multicolumn{4}{|c|}{  $\Gape[8pt]4r+7,4r+8~(U_2^r)$}
&\multicolumn{4}{|c|}{  $4r+1,4r+2~(U_1^r)$}\\
\hline
\multicolumn{2}{|c|}{$u_{4p+1}v_j$}
& \multicolumn{4}{|c|}{ \tabincell{c}{ $ 4r+10,4r+12, r\neq p~(V_2^r)$\\$6,8,r=p~(V_2^r)$}}
&\multicolumn{4}{|c|}{  $4r+5,4r+7~(V_1^r)$}\\
\hline
\multicolumn{2}{|c|}{$u_{4p+2}v_j$}
& \multicolumn{4}{|c|}{ \tabincell{c}{ $4r+6,4r+8,r\neq p~(V_2^r)$\\$5,7,r=p~(V_1^r)$}}
&\multicolumn{4}{|c|}{ $4r+1,4r+3~(V_1^r)$}\\
\hline
\end{tabular}
\end{table}

\noindent{\bf (c)}~ The construction for $\widehat{G}_{p+1}$.

With a similar argument to that in Case 1, we can list the edges that belong to $K_{1,4p+2,4p+2}$ but not to any $\widehat{G}_{r}$, $1\leq r\leq p$, in this case,   as shown in Table \ref{tab 4}.
Then $\widehat{G}_{p+1}$ is the graph that consists of the edges in Table \ref{tab 4}, Figure \ref{figure 3} shows $\widehat{G}_{p+1}$ is a planar graph.

Therefore, $\{\widehat{G}_1,\dots,\widehat{G}_{p+1}\}$ is a planar decomposition of $K_{1,4p+2,4p+2}$ in this case.

\begin{table}[htbp]
\centering\caption {The edges of $\widehat{G}_{p+1}$ in Case 2} \label{tab 4}
\scalebox{1}{
\tiny
\renewcommand{\arraystretch}{2}
\begin{tabular}{|c|c|}
\hline
\Gape[8pt]edges & subscript\\
\hline
$xv_{4p+1},v_{4p+1}u_j$ & {$j=4r-3,4r-2,4r-1,4r,7,8,4p+2.$ ($r=3,5,7,\dots,p.$)} \\
\hline
$xu_{4p+1},u_{4p+1}v_j$ & {$j=4r-3,4r-2,4r-1,4r,5,7,4p+2.$ ($r=3,5,7,\dots,p.$)} \\
\hline

$xv_{4p+2},v_{4p+2}u_j$ & {$j=4r-3,4r-2,4r-1,4r,5,6,4p+1.$ ($r=1,4,6,8\dots,p-1.$)} \\
\hline
$xu_{4p+2},u_{4p+2}v_j$ &{$j=4r-3,4r-2,4r-1,4r,6,8,4p+1.$ ($r=1,4,6,8\dots,p-1.$)}
\\
\hline
$u_1v_2,v_{4r-3}u_{4r},u_{4r}v_{4r-1}$ &  $r=1,3,\dots,p.$ \\
\hline
$v_{4r}u_{4r-3},u_{4r-3}v_{4r-2}$ &  $r=2,4,\dots,p-1.$  \\
\hline
$u_{j}v_{j}$ &  $j=1,\dots,4p+2.$  \\
\hline
\end{tabular}}
\end{table}

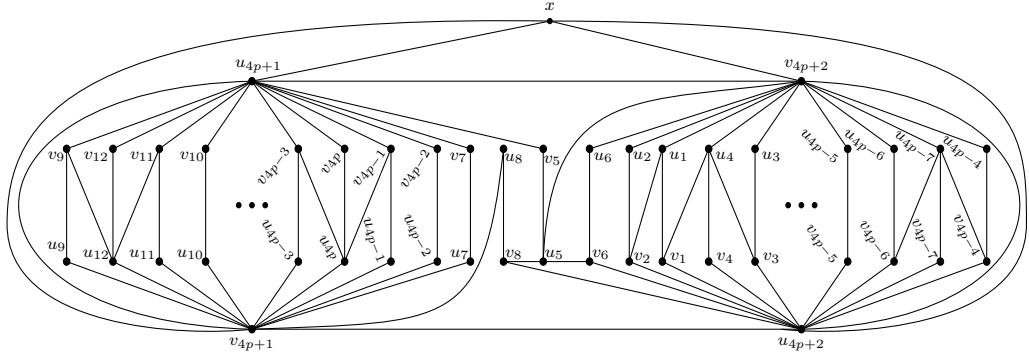
\begin{figure}[htp]
\begin{center}
\begin{tikzpicture}
[xscale=0.88]
\tikzstyle{every node}=[font=\tiny,scale=0.8]
[inner sep=0pt]\tiny
\draw (1,2.9)  node { $x$};\filldraw [black] (1,2.7) circle (1.0pt);
\draw(-3.5,1.9)--(1,2.7);
\draw[-](1,2.7)--(4.8,1.9);
\draw[-] (1,2.7) ..controls+(-7.2,0.1)and+(0,1.4)..(-7.2,0);
\draw[-] (-7.2,0) ..controls+(0,-1.4)and+(-1,-0.1).. (-3.5,-1.4);
\draw[-] (1,2.7) ..controls+(7.2,0)and+(0,1.4)..(8.2,0);
\draw[-] (8.2,0) ..controls+(0,-1.4)and+(1,-0.1).. (4.5,-1.4);
\filldraw [black] (-0.2,1) circle (1.4pt)         (-0.2,-0.5) circle (1.4pt)
                  (0.3,1) circle (1.4pt)                  (0.3,-0.5) circle (1.4pt)
                   (0.9,1) circle (1.4pt)                     (0.9,-0.5) circle (1.4pt)
                  (1.6,1) circle (1.4pt)                  (1.6,-0.5) circle (1.4pt);
 \draw (-0.2,1)--(-0.2,-0.5);\draw  (0.3,1)--(0.3,-0.5);\draw (0.9,1)--(0.9,-0.5);\draw (1.6,1)--(1.6,-0.5);
 \draw  (0.3,-0.5)--(1.6,-0.5);
 \draw(-0.38,0.89) node { $v_{7}$}; \draw(-0.36,-0.4) node { $u_{7}$};
 \draw(0.46,0.87) node { $u_{8}$};\draw(0.46,-0.4)  node { $v_{8}$};
  \draw(1.04,0.85) node { $v_{5}$}; \draw(1.07,-0.4) node { $u_{5}$};
   \draw(1.81,0.89) node { $u_{6}$}; \draw(1.77,-0.4) node { $v_{6}$};
\filldraw [black] (2.2,1) circle (1.4pt)
                  (2.2,-0.5) circle (1.4pt)
                  (2.7,1) circle (1.4pt)
                  (2.7,-0.5) circle (1.4pt)
                  (3.4,1) circle (1.4pt)
                  (3.4,-0.5) circle (1.4pt)
                   (4.1,1) circle (1.4pt)
                  (4.1,-0.5) circle (1.4pt)
                  (5.5,1) circle (1.4pt)
                  (5.5,-0.5) circle (1.4pt)
                  (6.2,1) circle (1.4pt)
                  (6.2,-0.5) circle (1.4pt)
                  (6.9,1) circle (1.4pt)
                  (6.9,-0.5) circle (1.4pt)
                  (7.6,1) circle (1.4pt)
                  (7.6,-0.5) circle (1.4pt);
\draw   (2.2,-0.5)--(2.2,1);\draw   (2.2,-0.5)--(2.7,1);
\draw (6.2,-0.5)--(6.9,1)-- (7.6,-0.5) ;\draw   (2.7,-0.5)--(3.4,1)-- (4.1,-0.5);
\draw (2.4,0.9)  node {$u_{2}$};\draw  (2.4,-0.45)node { $v_{2}$};
\draw (2.95,0.9)  node { $u_{1}$};\draw  (2.95,-0.45)node { $v_{1}$};
\draw (3.65,0.9)  node { $u_{4}$};\draw  (3.65,-0.45)node { $v_{4}$};
\draw (4.35,0.9)  node {$u_{3}$};\draw  (4.35,-0.45)node { $v_{3}$};
\draw [white](4.95,0.89) node [left]{}--(5,0.85)node [black,pos=0.02,above,sloped]{ $u_{4p-5}$}--(5,0.79) node [right]{};
\draw [white](5.65,0.89) node [left]{}--(5.7,0.86)node [black,pos=0.02,above,sloped]{ $u_{4p-6}$}--(5.7,0.79) node [right]{};
\draw [white](6.4,0.87) node [left]{}--(6.43,0.85)node [black,pos=0.02,above,sloped]{ $u_{4p-7}$}--(6.43,0.81) node [right]{};
\draw [white](7.12,0.80) node [left]{}--(7.15,0.78)node [black,pos=0.02,above,sloped]{ $ u_{4p-4}$}--(7.15,0.70) node [right]{};
\draw [white](4.95,-0.38) node [left]{}--(5,-0.44)node [black,pos=0.02,above,sloped]{ $v_{4p-5}$}--(5,-0.5) node [right]{};
\draw [white](5.74,-0.33) node [left]{}--(5.84,-0.46)node [black,pos=0.02,above,sloped]{$ v_{4p-6}$ }--(5.84,-0.46) node [right]{};
\draw [white](6.48,-0.3) node [left]{}--(6.497,-0.33)node [black,pos=0.02,above,sloped]{ $ v_{4p-7}$}--(6.597,-0.36) node [right]{};
\draw [white](7.12,-0.25) node [left]{}--(7.15,-0.30)node [black,pos=0.02,above,sloped]{ $ v_{4p-4}$}--(7.15,-0.37) node [right]{};
\filldraw [black] (-0.7,1) circle (1.4pt)    (-0.7,-0.5) circle (1.4pt)      (-1.4,1) circle (1.4pt)
                  (-1.4,-0.5) circle (1.4pt)            (-2.1,1) circle (1.4pt)            (-2.1,-0.5) circle (1.4pt)
                  (-2.8,1) circle (1.4pt)               (-2.8,-0.5) circle (1.4pt)        (-4.2,1) circle (1.4pt)
                  (-4.2,-0.5) circle (1.4pt)          (-4.9,1) circle (1.4pt)             (-4.9,-0.5) circle (1.4pt)
                  (-5.6,1) circle (1.4pt)               (-5.6,-0.5) circle (1.4pt)         (-6.3,1) circle (1.4pt)
                  (-6.3,-0.5) circle (1.4pt);
\draw (-6.3,1)--(-5.6,-0.5)-- (-4.9,1) ;\draw  (-2.8,1)-- (-2.1,-0.5)--(-1.4,1);
\draw (-6.45,0.9)  node { $v_{9}$};\draw  (-6.45,-0.3)node { $u_{9}$};
\draw (-5.85,0.9)  node { $v_{12}$};\draw  (-5.85,-0.4)node { $u_{12}$};
\draw (-5.15,0.9)  node { $v_{11}$};\draw  (-5.15,-0.4)node { $u_{11}$};
\draw (-4.4,0.9)  node { $v_{10}$};\draw  (-4.43,-0.40)node { $u_{10}$};
\draw [white](-0.85,0.65) node [left]{}--(-0.81,0.72)node [black,pos=0.02,above,sloped]{ $v_{4p-2}$}--(-0.81,0.72) node [right]{};
\draw [white](-1.55,0.65) node [left]{}--(-1.5,0.72)node [black,pos=0.02,above,sloped]{ $v_{4p-1}$}--(-1.5,0.72) node [right]{};
\draw [white](-2.12,0.72) node [left]{}--(-2.08,0.79)node [black,pos=0.02,above,sloped]{ $v_{4p}$}--(-2.08,0.79) node [right]{};
\draw [white](-3,0.67) node [left]{}--(-2.95,0.74)node [black,pos=0.02,above,sloped]{ $v_{4p-3}$}--(-2.95,0.74) node [right]{};
\draw [white](-3.27,-0.33) node [left]{}--(-3.22,-0.39)node [black,pos=0.02,above,sloped]{ $ u_{4p-3}$}--(-3.22,-0.39) node [right]{};
\draw [white](-2.5,-0.43) node [left]{}--(-2.45,-0.49)node [black,pos=0.02,above,sloped]{ $ u_{4p}$}--(-2.45,-0.55) node [right]{};
\draw [white](-1.85,-0.28) node [left]{}--(-1.83,-0.33)node [black,pos=0.02,above,sloped]{ $ u_{4p-1}$}--(-1.83,-0.36) node [right]{};
\draw [white](-1.18,-0.25) node [left]{}--(-1.15,-0.30)node [black,pos=0.02,above,sloped]{ $ u_{4p-2}$}--(-1.15,-0.37) node [right]{};
\filldraw [black] (-3.5,0.25) circle (1.0pt)
                  (-3.3,0.25) circle (1.0pt)
                  (-3.7,0.25) circle (1.0pt);
\filldraw [black] (4.8,0.25) circle (1.0pt)
                  (4.6,0.25) circle (1.0pt)
                  (5,0.25) circle (1.0pt);
\filldraw [black] (-3.5,1.9) circle (1.4pt)
                  (-3.5,-1.4) circle (1.4pt)
                  (4.8,1.9) circle (1.4pt)
                  (4.8,-1.4) circle (1.4pt);
 \draw[-] (-3.5,1.9)..controls+(-4.7,-0.1)and+(-4.7,0.1)..  (-3.5,-1.4) ;
\draw[-] (4.8,1.9)..controls+(4.4,-0.1)and+(4.4,0.1)..  (4.8,-1.4) ;
\draw  (-3.5,1.9)--(4.8,1.9); \draw   (-3.5,-1.4)--(4.8,-1.4);
\draw[-] (-3.5,-1.4)..controls+(3,0.1)and+(-0.3,-2.4)..  (0.3,1) ;
\draw  (-3.4,2.1) node { $u_{4p+1}$};\draw (-3.5,-1.6)node { $v_{4p+1}$};
\draw(4.88,2.1) node { $v_{4p+2}$};\draw(4.8,-1.6)  node { $u_{4p+2}$};
\draw[-] (4.8,1.9)..controls+(-2.4,-0.3)and+(0.1,2.4)..  (0.9,-0.5) ;
\draw (-0.7,1)--(-3.5,1.9);\draw (-1.4,1)--(-3.5,1.9);\draw (-2.1,1)--(-3.5,1.9);
\draw (-2.8,1)--(-3.5,1.9);\draw (-4.2,1)--(-3.5,1.9);\draw (-4.9,1)--(-3.5,1.9);
\draw (-5.6,1)--(-3.5,1.9);\draw (-0.2,1)--(-3.5,1.9);\draw (-6.3,1)--(-3.5,1.9);
\draw(0.9,1)--(-3.5,1.9);
\draw (-3.5,-1.4)--(-0.7,-0.5);\draw (-3.5,-1.4)--(-1.4,-0.5);\draw (-3.5,-1.4)--(-2.1,-0.5);
\draw (-3.5,-1.4)--(-2.8,-0.5);\draw (-3.5,-1.4)--(-4.2,-0.5);\draw (-3.5,-1.4)--(-4.9,-0.5);
\draw (-3.5,-1.4)--(-5.6,-0.5);\draw (-3.5,-1.4)--(-6.3,-0.5);\draw (-3.5,-1.4)--(-0.2,-0.5);
\draw (2.7,1)--(4.8,1.9);\draw (3.4,1)--(4.8,1.9);\draw (4.1,1)--(4.8,1.9);
\draw (5.5,1)--(4.8,1.9);\draw (6.2,1)--(4.8,1.9);
\draw (6.9,1)--(4.8,1.9);\draw (7.6,1)--(4.8,1.9);
\draw (2.2,1)--(4.8,1.9);\draw (1.6,1)--(4.8,1.9);
\draw (4.8,-1.4)--(2.7,-0.5);\draw (4.8,-1.4)--(3.4,-0.5);\draw (4.8,-1.4)--(4.1,-0.5);
\draw (4.8,-1.4)--(5.5,-0.5);\draw (4.8,-1.4)--(6.2,-0.5);
\draw (4.8,-1.4)--(6.9,-0.5);\draw (4.8,-1.4)--(7.6,-0.5);
\draw (4.8,-1.4)-- (2.2,-0.5);\draw (4.8,-1.4)--(1.6,-0.5);
\draw (4.8,-1.4)--(0.3,-0.5);
\draw (2.7,1)--(2.7,-0.5);\draw (3.4,1)--(3.4,-0.5);\draw (4.1,1)--(4.1,-0.5);
\draw (5.5,1)--(5.5,-0.5);\draw (6.2,1)--(6.2,-0.5);
\draw (6.9,1)--(6.9,-0.5);\draw (7.6,1)--(7.6,-0.5);
\draw (-0.7,1)--(-0.7,-0.5);\draw (-1.4,1)--(-1.4,-0.5);\draw (-2.1,1)--(-2.1,-0.5);
\draw (-2.8,1)--(-2.8,-0.5);\draw (-4.2,1)--(-4.2,-0.5);\draw (-4.9,1)--(-4.9,-0.5);
\draw (-5.6,1)--(-5.6,-0.5);\draw (-6.3,1)--(-6.3,-0.5);
\end{tikzpicture}
\caption{The graph $\widehat{G}_{p+1}$ in Case 2}
\label{figure 3}
\end{center}
\end{figure}

{\bf Case 3.}~ When $p\leq 3$.

When $p=0$, $K_{1,2,2}$ is a planar graph. When $p=1,2,3$, we give a planar decomposition for $K_{1,6,6}$, $K_{1,10,10}$ and $K_{1,14,14}$ with $2$, $3$ and $4$ subgraphs respectively, as shown in Figure \ref{figure 4}, Figure \ref{figure 5} and Figure \ref{figure 6}.

\begin{figure}[H]
\begin{center}
\begin{tikzpicture}
\tiny
[inner sep=0pt]
\filldraw [black] (0.5,0) circle (1.4pt)
                  (1.2,0) circle (1.4pt)
                  (1.9,0) circle (1.4pt)
                   (-0.5,0) circle (1.4pt)
                  (-1.2,0) circle (1.4pt)
                  (-1.9,0) circle (1.4pt)
                   (0,0.5) circle (1.4pt)
                  (0,1.2) circle (1.4pt)
                   (0,1.9) circle (1.4pt)
                  (0,-0.5) circle (1.4pt)
                  (0,-1.2) circle (1.4pt)
                   (0,-1.9) circle (1.4pt);
\draw  (0,-0.5)-- (0,-1.9) ;\draw(0,0.5)-- (0,1.9) ;
\draw  (-0.5,0)--  (-1.9,0) ;\draw   (0.5,0)--  (1.9,0);
\draw  (0,1.9) --  (-1.9,0)--(0,-1.9)-- (1.9,0)-- (0,1.9);
\draw  (0,0.5) --  (-0.5,0)--(0,-0.5)-- (0.5,0)-- (0,0.5);
\draw[-] (0,1.9)..controls+(-0.4,-0.5)and+(-0.1,0.5).. (-0.5,0);
\draw[-] (0,-1.9)..controls+(0.48,0.5)and+(0.1,-0.5).. (0.5,0);
\draw[-]  (-1.9,0)..controls+(0.4,-0.5)and+(-0.5,-0.1)..  (0,-0.5);
\draw[-]  (1.9,0)..controls+(-0.4,0.5)and+(0.5,0.1)..  (0,0.5);
\draw  (0.2,0.65) node { $v_3$};
\draw   (0.22,1.2)node { $u_5$};
\draw   (0.22,1.9)node { $v_1$};
\draw  (0.2,-0.65) node { $v_2$};
\draw   (0.22,-1.2)node { $u_6$};
\draw   (0.22,-2)node { $v_4$};
\draw   (-0.7,0.15) node { $u_4$};
\draw  (-1.2,0.15) node { $v_5$};
\draw   (-2,0.15)node { $u_3$};
\draw   (0.6,0.15) node { $u_1$};
\draw  (1.2,0.15) node { $v_6$};
\draw   (2,0.15)node { $u_2$};
\filldraw [black] (0,0) circle (1.4pt);
\draw  (0,0.2) node { $x$};
\begin{scope}[xshift=6cm]
\filldraw [black] (0,0) circle (1.4pt);
\draw  (0,0.3) node { $x$};
\draw  (-1.5,1.2)  parabola bend (0,1.7)  (1.5,1.2);
\draw  (-1.5,-1.2)  parabola bend (0,-1.7)  (1.5,-1.2);
\draw[-](-1.5,1.2)..controls+(-0.6,-0.8)and+(-0.6,0.8)..  (-1.5,-1.2);
\draw[-](1.5,-1.2)..controls+(0.6,0.8)and+(0.6,-0.8)..  (1.5,1.2);
\draw  (-1.5,1.4)  node { $u_{6}$};
\draw  (-0.4,1.4)  node { $v_{1}$};
\draw  (0.4,1.4)  node { $u_{1}$};
\draw  (1.5,1.4)  node { $v_{5}$};
\draw  (-1.5,-1.4)  node { $v_{6}$};
\draw  (-0.4,-1.4)  node { $u_{4}$};
\draw  (0.4,-1.4)  node { $v_{4}$};
\draw  (1.5,-1.4)  node { $u_{5}$};
\draw  (1.7,0.4)  node { $u_{2}$};
\draw  (1.7,-0.4)  node { $v_{2}$};
\draw  (-1.7,0.4)  node { $v_{3}$};
\draw  (-1.7,-0.4)  node { $u_{3}$};
\filldraw [black] (1.5,0.4) circle (1.4pt)
                  (1.5,1.2) circle (1.4pt)
                  (1.5,-0.4) circle (1.4pt)
                   (1.5,-1.2) circle (1.4pt)
                    (-1.5,0.4) circle (1.4pt)
                  (-1.5,1.2) circle (1.4pt)
                  (-1.5,-0.4) circle (1.4pt)
                   (-1.5,-1.2) circle (1.4pt)
                  (0.46,1.2) circle (1.4pt)
                  (-0.46,1.2) circle (1.4pt)
                  (0.46,-1.2) circle (1.4pt)
                  (-0.46,-1.2) circle (1.4pt);
\draw (0,0)--(-0.46,-1.2);\draw (0,0)--(0.46,-1.2);\draw (0,0)--(1.5,-1.2);
\draw (0,0)--(1.5,-0.4);\draw (0,0)--(1.5,1.2);\draw (0,0)--(0.46,1.2);
\draw (0,0)--(-0.46,1.2);\draw (0,0)--(-1.5,1.2);\draw (0,0)-- (1.5,0.4);
\draw (0,0)--(-1.5,-0.4);\draw (0,0)--(-1.5,-1.2);\draw (0,0)--(-1.5,0.4);
\draw (-0.46,-1.2)--(0.46,-1.2) --  (1.5,-1.2)  -- (1.5,-0.4) -- (1.5,0.4) -- (1.5,1.2) -- (0.46,1.2)  -- (-0.46,1.2)  --  (-1.5,1.2) -- (-1.5,-0.4)-- (-1.5,-1.2) -- (-0.46,-1.2) -- (-0.46,-1.2);
\end{scope}
\end{tikzpicture}
\caption{A planar decomposition of $K_{1,6,6}$}
\label{figure 4}
\end{center}
\end{figure}
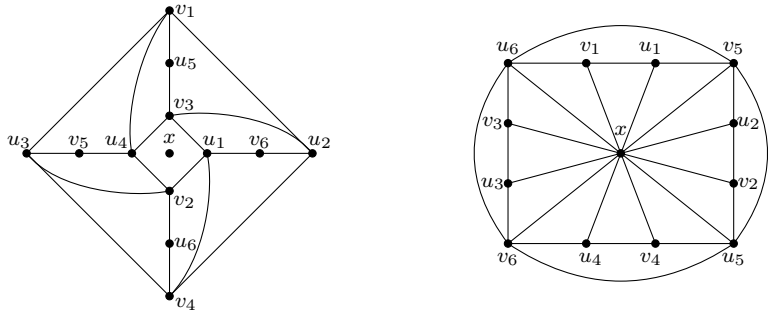

\begin{figure}[H]
\begin{center}
\begin{tikzpicture}
[scale=1.3]
\tikzstyle{every node}=[font=\tiny,scale=1]
\tiny
[inner sep=0pt]
\filldraw [black] (0.5,0) circle (1pt)
                  (1.9,0) circle (1pt)
                  (1.2,-0.16) circle (1pt)
                  (1.2,0.16) circle (1pt)
                   (-0.5,0) circle (1pt)
                  (-1.2,0.16) circle (1pt)
                  (-1.9,0) circle (1pt)
                  (-1.2,-0.16) circle (1pt)
                   (0,0.5) circle (1pt)
                  (0.36,1.2) circle (1pt)
                   (0,1.9) circle (1pt)
                    (-0, 1.2) circle (1pt)
                  (0,-0.5) circle (1pt)
                  (0.16,-1.2) circle (1pt)
                   (0,-1.9) circle (1pt)
                    (-0.16,-1.2) circle (1pt);
\filldraw [black] (-0.5,0.5) circle (1pt);\draw  (-0.7,0.55) node { $u_{10}$};
\draw   (-1.2,0.16)--(-0.5,0.5)-- (0,0.5) ;
\draw[-]  (0,1.9)..controls+(-0.4,-0.5)and+(-0.2,0.5).. (-0.5,0.5);
\draw[-] (0,-0.5)..controls+(-0.1,0.5)and+(0.2,-0.2).. (-0.5,0.5);
\filldraw [black] (0.5,-0.5) circle (1pt);\draw  (0.65,-0.58) node { $u_{9}$};
\draw   (1.2,-0.16)--(0.5,-0.5)-- (0,-0.5) ;
\draw[-]  (0,-1.9)..controls+(0.4,0.5)and+(0.1,-0.5).. (0.5,-0.5);
\draw[-] (0,0.5)..controls+(0.1,-0.5)and+(-0.2,0.2).. (0.5,-0.5);
\filldraw [black] (0.7,0.35) circle (1pt);\draw  (0.55,0.35) node { $v_{9}$};
\draw    (0.7,0.35)--(0.5,0);
\draw[-]  (0.7,0.35)..controls+(0.1,0.1)and+(-0.2,0.2).. (1.9,0);
\filldraw [black] (-0.2,0.9) circle (1pt);\draw  (-0.299,1.03) node { $v_{10}$};
\draw  (-0.5,0.5)-- (-0.2,0.9)-- (0,1.2) ;
\draw  (0,-0.5)--(-0.16,-1.2)-- (0,-1.9) ;\draw  (0,-0.5)--(0.16,-1.2)-- (0,-1.9) ;
\draw(0,0.5)--(0,1.2)-- (0,1.9) ;\draw(0,0.5)--(0.36,1.2)-- (0,1.9) ;
\draw  (-0.5,0)-- (-1.2,0.16)--  (-1.9,0) ;\draw  (-0.5,0)-- (-1.2,-0.16)--  (-1.9,0) ;
\draw   (0.5,0)-- (1.2,0.16)--  (1.9,0);\draw   (0.5,0)-- (1.2,-0.16)--  (1.9,0);
\draw  (0,1.9) --  (-1.9,0);\draw (0,-1.9)-- (1.9,0)-- (0,1.9);
\draw  (0,-0.5) --  (-0.5,0);\draw(0,0.5)-- (0.5,0);
\draw[-]  (1.9,0)..controls+(-0.4,0.38)and+(0.5,0.1)..  (0,0.5);
\draw  (0.22,0.62) node { $v_3$};
\draw   (0.55,1.18)node { $u_5$};
\draw   (0.16,1.2)node { $u_6$};
\draw   (0.17,1.9)node { $v_1$};
\draw  (0.11,-0.4) node { $v_2$};
\draw   (0.28,-1.11)node { $u_7$};
\draw   (-0.281,-1.07)node { $u_8$};
\draw   (0.17,-1.95)node { $v_4$};
\draw   (-0.4,0.10) node { $u_4$};
\draw  (-1.35,0.25) node { $v_5$};
\draw  (-1.35,-0.27) node { $v_7$};
\draw   (-2,0.12)node { $u_3$};
\draw   (0.6,-0.12) node { $u_1$};
\draw  (1.2,0.03) node { $v_6$};
\draw  (1.2,-0.3) node { $v_8$};
\draw   (2,0.09)node { $u_2$};
\filldraw [black] (-1,-1) circle (1pt);
\draw  (-1.1,-1.1) node { $x$};
\draw   (-1,-1)  --  (-0.5,0);\draw   (-1,-1)  --  (0,-0.5);
\draw   (-1,-1)  --  (-1.2,-0.16);\draw   (-1,-1)  --  (-0.16,-1.2);
\draw   (-1,-1)  --  (-1.9,0);\draw   (-1,-1)  --  (0,-1.9);
\draw[-]   (-1,-1)..controls+(-0.5,0.1)and+(0.1,-0.3)..  (-2.2,0);
\draw[-]   (0,1.9)..controls+(-0.5,-0.1)and+(-0.01, 0.8)..  (-2.2,0);
\draw[-]   (-1,-1)..controls+(0.1,-0.1)and+(-0.5,0.1)..  (0,-2.2);
\draw[-]   (1.9,0)..controls+(-0.1,-0.1)and+(0.8,0.01)..  (0,-2.2);
\begin{scope}[xshift=5cm]
\filldraw [black] (0.5,0) circle (1pt)
                  (1.9,0) circle (1pt)
                  (1.2,-0.16) circle (1pt)
                  (1.2,0.16) circle (1pt)
                   (-0.5,0) circle (1pt)
                  (-1.2,0.4) circle (1pt)
                  (-1.9,0) circle (1pt)
                  (-1.2,-0.35) circle (1pt)
                   (0,0.5) circle (1pt)
                  (0.16,1.2) circle (1pt)
                   (0,1.9) circle (1pt)
                    (-0.16, 1.2) circle (1pt)
                  (0,-0.5) circle (1pt)
                  (0.41,-1.2) circle (1pt)
                   (0,-1.9) circle (1pt)
                    (-0.46,-1.2) circle (1pt);
\filldraw [black] (-0.5,0.5) circle (1pt);\draw  (-0.65,0.58) node { $u_{9}$};
\draw   (-1.2,0.4)--(-0.5,0.5)-- (0,0.5) ;
\draw[-]  (0,1.9)..controls+(-0.4,-0.5)and+(-0.1,0.5).. (-0.5,0.5);
\draw[-] (0,-0.5)..controls+(-0.1,0.5)and+(0.2,-0.2).. (-0.5,0.5);
\filldraw [black] (0.5,-0.5) circle (1pt);\draw  (0.74,-0.56) node { $u_{10}$};
\draw   (1.2,-0.16)--(0.5,-0.5)-- (0,-0.5) ;
\draw[-]  (0,-1.9)..controls+(0.4,0.5)and+(0.4,-0.5).. (0.5,-0.5);
\draw[-] (0,0.5)..controls+(0.1,-0.5)and+(-0.2,0.2).. (0.5,-0.5);
 \filldraw [black]  (0,-1.2) circle (1pt);\draw  (0,-1.07) node { $v_{9}$};
\draw (-0.46,-1.2)--(0,-1.2)-- (0.41,-1.2) ;
\filldraw [black] (-1.2,0) circle (1pt);\draw  (-1.2,0.14) node { $v_{10}$};
\draw  (-0.5 ,0)--(-1.9, 0) ;
\draw  (0,-0.5)--(-0.46,-1.2)-- (0,-1.9) ;\draw  (0,-0.5)--(0.41,-1.2)-- (0,-1.9) ;
\draw(0,0.5)--(-0.16,1.2)-- (0,1.9) ;\draw(0,0.5)--(0.16,1.2)-- (0,1.9) ;
\draw  (-0.5,0)-- (-1.2,0.4)--  (-1.9,0) ;\draw  (-0.5,0)-- (-1.2,-0.35)--  (-1.9,0) ;
\draw   (0.5,0)-- (1.2,0.16)--  (1.9,0);\draw   (0.5,0)-- (1.2,-0.16)--  (1.9,0);
\draw  (0,1.9) --  (-1.9,0);\draw (-1.9,0)-- (0,-1.9)-- (1.9,0);
\draw  (0,-0.5) --  (-0.5,0);\draw(0,0.5)-- (0.5,0);
\draw[-]  (-1.9,0)..controls+(1.2,-1)and+(-0.5,-0.2)..  (0,-0.5);
\draw  (0.17,0.5) node { $v_7$};
\draw   (0.3,1.07)node { $u_1$};
\draw   (-0.31,1.14)node { $u_2$};
\draw   (0.17,1.9)node { $v_5$};
\draw  (0.12,-0.4) node { $v_6$};
\draw   (0.47,-1.07)node { $u_4$};
\draw   (-0.53,-1.07)node { $u_3$};
\draw   (0.13,-2)node { $v_8$};
\draw   (-0.4,0.09) node { $u_8$};
\draw  (-1.15,0.53) node { $v_1$};
\draw  (-1.2,-0.21) node { $v_3$};
\draw   (-2,0.12)node { $u_7$};
\draw   (0.57,-0.15) node { $u_5$};
\draw  (1.2,0.29) node { $v_2$};
\draw  (1.2,-0.29) node { $v_4$};
\draw   (2,0.1)node { $u_6$};
\filldraw [black] (1,1) circle (1pt);
\draw  (1.03,0.8) node { $x$};
\draw   (1,1)  --  (0.5,0);\draw   (1,1)  --  (0.16,1.2);
\draw   (1,1)  --  (1.9,0);\draw   (1,1)  --  (0,1.9);
\draw[-]   (1,1)..controls+(0.5,-0.1)and+(-0.1,0.4)..  (2.3,0);
\draw[-]   (0,-1.9)..controls+(0.5,0.1)and+(0.01, -0.8)..  (2.3,0);
\draw[-]   (1,1)..controls+(-0.1,0.1)and+(0.5,-0.1)..  (0,2.2);
\draw[-]   (-1.9,0)..controls+(0.1,0.1)and+(-0.8,-0.01)..  (0,2.2);
\end{scope}
\end{tikzpicture}
\end{center}
\end{figure}

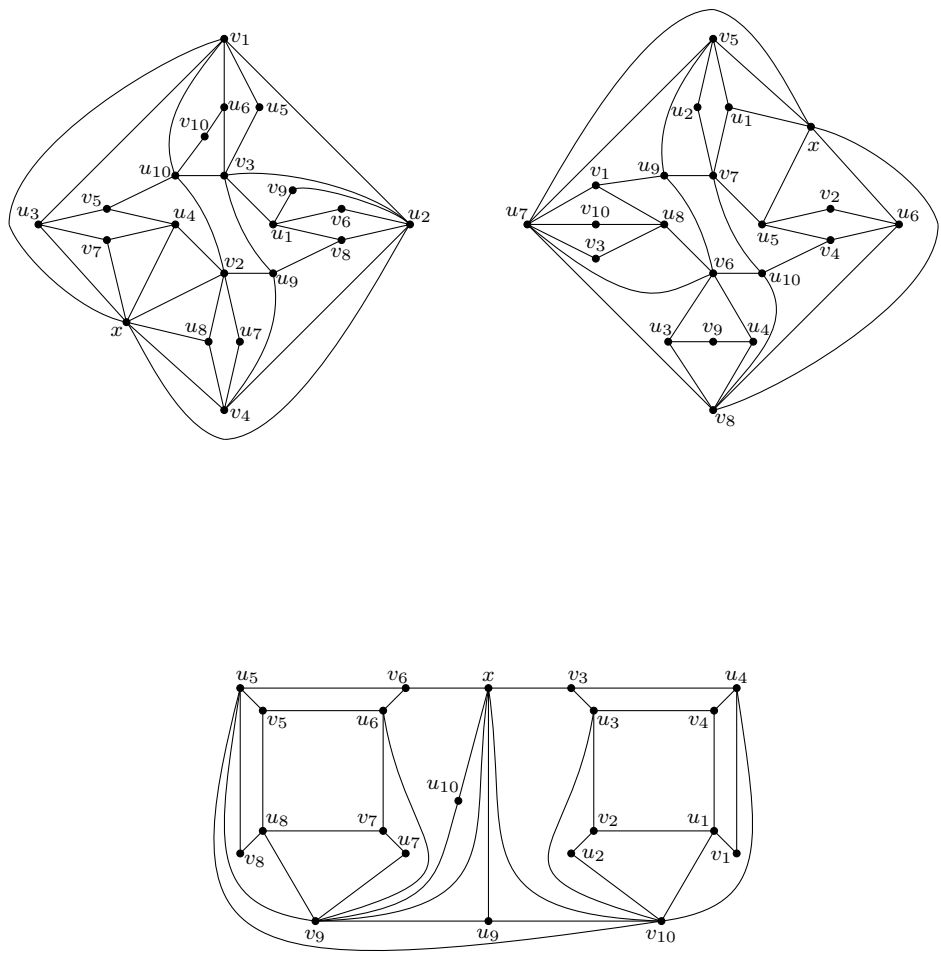
\begin{figure}[thp]
\begin{center}
\begin{tikzpicture}
[inner sep=0pt]
\filldraw [black]     (-3.3,-2) circle (1.3pt);
\draw (-3.3,-2.2) node {\tiny $v_{9}$};
\filldraw [black]     (-1.4,-0.4) circle (1.3pt);
\draw (-1.6,-0.2) node {\tiny $u_{10}$};
\draw[-](-3.3,-2)..controls+(1.7,0.2)and+(-0.3,-1)..(-1.4,-0.4);
\draw[-](-3.3,-2)--(-2.1,-1.1);\draw[-](-3.3,-2)--(-4,-0.8);
\draw[-](-3.3,-2)..controls+(2.7,0.1)and+(-0.3,-2)..(-1,1.1);
\draw[-](-3.3,-2)..controls+(2.5,0.7)and+(0.3,-2)..(-2.4,0.8);
\draw[-](-3.3,-2)..controls+(-1.5,0.2)and+(-0.3,-2)..(-4.3,1.1);
\filldraw [black]     (1.3,-2) circle (1.3pt);
\draw (1.3,-2.2) node {\tiny $v_{10}$};
\draw[-](1.3,-2)--(0.1,-1.1);\draw[-](1.3,-2)--(2,-0.8);
\draw[-](1.3,-2)..controls+(-2.7,0.1)and+(0.3,-2)..(-1,1.1);
\draw[-](1.3,-2)..controls+(-2.5,0.7)and+(-0.3,-2)..(0.4,0.8);
\draw[-](1.3,-2)..controls+(1.5,0.2)and+(0.3,-2)..(2.3,1.1);
\draw[-](1.3,-2)..controls+(-6,-0.9)and+(-0.9,-3.5)..(-4.3,1.1);
\filldraw [black]     (-1,1.1) circle (1.3pt);
\draw (-1,1.25) node {\tiny $x$};
\draw[-](0.1,1.1)--(-2.1,1.1);\draw[-] (-1,1.1)--(-1,-2);\draw[-] (-1,1.1)--(-1.4,-0.4);
\filldraw [black]     (-1,-2) circle (1.3pt);
\draw (-1,-2.18) node {\tiny $u_9$};
\draw[-](-3.3,-2)--(1.3,-2);
\filldraw [black]     (-4,0.8) circle (1.3pt);
\draw (-3.8,0.65) node {\tiny $v_{5}$};
\filldraw [black]     (-4,-0.8) circle (1.3pt);
\draw (-3.8,-0.66) node {\tiny $u_{8}$};
\filldraw [black]     (-2.4,0.8) circle (1.3pt);
\draw (-2.6,0.645) node {\tiny $u_{6}$};
\filldraw [black]     (-2.4,-0.8) circle (1.3pt);
\draw (-2.6,-0.66) node {\tiny $v_{7}$};
\draw[-](-4,0.8)--(-4,-0.8)--(-2.4,-0.8)--(-2.4,0.8)--(-4,0.8);
\filldraw [black]     (-4.3,1.1) circle (1.3pt);\draw[-](-4.3,1.1)--(-4,0.8);
\draw (-4.2,1.25) node {\tiny $u_{5}$};\draw[-](-2.1,1.1)--(-4.3,1.1)--(-4.3,-1.1);
\filldraw [black]     (-2.1,1.1) circle (1.3pt);\draw[-](-2.1,1.1)--(-2.4,0.8);
\draw (-2.2,1.25) node {\tiny $v_{6}$};
\filldraw [black]     (-4.3,-1.1) circle (1.3pt);\draw[-](-4,-0.8)--(-4.3,-1.1);
\draw (-4.1,-1.2) node {\tiny $v_{8}$};
\filldraw [black]     (-2.1,-1.1) circle (1.3pt);\draw[-](-2.1,-1.1) --(-2.4,-0.8);
\draw (-2.05,-0.95) node {\tiny $u_{7}$};
\filldraw [black]     (2,0.8) circle (1.3pt);
\draw (1.8,0.65) node {\tiny $v_{4}$};
\filldraw [black]     (2,-0.8) circle (1.3pt);
\draw (1.8,-0.66) node {\tiny $u_{1}$};
\filldraw [black]     (0.4,0.8) circle (1.3pt);
\draw (0.6,0.645) node {\tiny $u_{3}$};
\filldraw [black]     (0.4,-0.8) circle (1.3pt);
\draw (0.6,-0.66) node {\tiny $v_{2}$};
\draw[-](2,0.8)--(2,-0.8)--(0.4,-0.8)--(0.4,0.8)--(2,0.8);
\filldraw [black]     (2.3,1.1) circle (1.3pt);\draw[-](2.3,1.1)--(2,0.8);
\draw (2.3,1.25) node {\tiny $u_{4}$};\draw[-](0.1,1.1)--(2.3,1.1)--(2.3,-1.1);
\filldraw [black]     (0.1,1.1) circle (1.3pt);\draw[-](0.1,1.1)--(0.4,0.8);
\draw (0.2,1.25) node {\tiny $v_{3}$};
\filldraw [black]     (2.3,-1.1) circle (1.3pt);\draw[-](2,-0.8)--(2.3,-1.1);
\draw (2.1,-1.15) node {\tiny $v_{1}$};
\filldraw [black]     (0.1,-1.1) circle (1.3pt);\draw[-](0.1,-1.1) --(0.4,-0.8);
\draw (0.4,-1.12) node {\tiny $u_{2}$};
\end{tikzpicture}
\caption{A planar decomposition of $K_{1,10,10}$}
\label{figure 5}
\end{center}
\end{figure}

\begin{figure}[H]
\begin{center}
\begin{tikzpicture}
[scale=0.8]
\tikzstyle{every node}=[font=\tiny,scale=0.8]
\tiny
[inner sep=0pt]
\filldraw [black] (0.5,0) circle (1.3pt);\draw  (0.53,0.2)  node { $u_{1}$};
\filldraw [black](3.3,0) circle (1.3pt);\draw  (3.33,0.2)  node { $u_{2}$};
 \filldraw [black] (1.9,-0.3) circle (1.3pt);\draw  (1.9,-0.5)  node { $v_{12}$};
\filldraw [black](1.9,0.3) circle (1.3pt);\draw  (1.9,0.1)  node { $v_{10}$};
\filldraw [black](1.9,-0.9) circle (1.3pt);\draw  (1.9,-1.1)  node { $v_{8}$};
\filldraw [black](1.9,0.9) circle (1.3pt);\draw  (1.9,0.7)  node { $v_{6}$};
\filldraw [black] (-0.5,0) circle (1.3pt);\draw  (-0.5,0.2)  node { $u_{4}$};
\filldraw [black](-1.9,0.3) circle (1.3pt);\draw  (-1.9,0.1)  node { $v_{9}$};
\filldraw [black](-3.3,0) circle (1.3pt);\draw  (-3.34,0.2)  node { $u_{3}$};
\filldraw [black] (-1.9,-0.3) circle (1.3pt);\draw  (-1.9,-0.5)  node { $v_{11}$};
\filldraw [black] (-1.9,0.9) circle (1.3pt);\draw  (-1.9,0.7)  node { $v_{5}$};
 \filldraw [black](-1.9,-0.9) circle (1.3pt);\draw  (-1.7,-1)  node { $v_{7}$};
\filldraw [black] (0,0.5) circle (1.3pt);\draw  (-0.15,0.4)  node { $v_{3}$};
\filldraw [black](0.2,1.9) circle (1.3pt);\draw  (0.37,1.70)  node { $u_{9}$};
\filldraw [black] (0,3.3) circle (1.3pt);\draw  (0.23, 3.3)  node { $v_{1}$};
 \filldraw [black](-0.2, 1.9) circle (1.3pt);\draw  (-0.2,2.09)  node { $v_{14}$};
\filldraw [black](-0.6, 1.9) circle (1.3pt);\draw  (-0.79,1.9)  node { $u_{6}$};
\filldraw [black](0.6, 1.9) circle (1.3pt);\draw  (0.6,2.09)  node { $v_{13}$};
\filldraw [black](-1, 1.9) circle (1.3pt);\draw  (-1.03,1.65)  node { $u_{5}$};
\filldraw [black](1, 1.9) circle (1.3pt);\draw  (1.2,1.70)  node { $u_{10}$};
\filldraw [black](0,-0.5) circle (1.3pt);\draw  (0.2,-0.46)  node { $v_{2}$};
 \filldraw [black] (0.3,-1.9) circle (1.3pt);\draw  (0.04,-1.9)  node { $u_{11}$};
\filldraw [black](0,-3.3) circle (1.3pt);\draw  (0.23,-3.35)  node { $v_{4}$};
\filldraw [black] (-0.3,-1.9) circle (1.3pt);\draw  (-0.53,-1.9)  node { $u_{7}$};
\filldraw [black](0.9,-1.9) circle (1.3pt);\draw  (0.62,-1.9)  node { $u_{12}$};
\filldraw [black] (-0.9,-1.9) circle (1.3pt);\draw  (-1.1,-2)  node { $u_{8}$};
\draw (-3.3,0)--(0,3.3)--(3.3,0)--(0,-3.3);
\draw (-0.5,0)--(0,-0.5);\draw (0.5,0)--(0,0.5);
\draw  (0,0.5)..controls+(1.7,1.6)and+(-0.5,0.3)..(3.3,0);
\draw (0, 3.3)--(-1,1.9)--(0,0.5);\draw (0, 3.3)--(-0.6,1.9)--(0,0.5);
\draw (-0.6,1.9)--(-0.2,1.9)--(0.2,1.9)--(0.6,1.9)--(1,1.9);
\draw (0, 3.3)--(0.2,1.9)--(0,0.5);\draw (0, 3.3)--(1,1.9)--(0,0.5);
\draw (0, -3.3)--(0.3,-1.9)--(0,-0.5);\draw (0, -3.3)--(0.9,-1.9)--(0,-0.5);
\draw (0, -3.3)--(-0.3,-1.9)--(0,-0.5);\draw (0, -3.3)--(-0.9,-1.9)--(0,-0.5);
\draw (-3.3 ,0)--(-1.9,0.3)--(-0.5,0);\draw (-3.3 ,0 )--(-1.9 ,0.9)--(-0.5, 0);
\draw (-3.3 ,0)--(-1.9,-0.3)--(-0.5,0);\draw (-3.3 ,0 )--(-1.9 ,-0.9)--(-0.5, 0);
\draw (3.3 ,0)--(1.9,0.3)--(0.5,0);\draw (3.3 ,0 )--(1.9 ,0.9)--(0.5, 0);
\draw (3.3 ,0)--(1.9,-0.3)--(0.5,0);\draw (3.3 ,0 )--(1.9 ,-0.9)--(0.5, 0);
\filldraw [black] (-1.2,1.1) circle (1.3pt);\draw  (-1.28,0.9)  node { $u_{14}$};
\draw  (-1.2,1.1)--(-1.9,0.9);\draw  (-1.2,1.1)--(0 ,0.5);
\draw  (-1.2,1.1)..controls+(0.4,-0.3)and+(-0.1,1)..(0,-0.5);
\draw  (-1.2,1.1)..controls+(-0.6,0.3)and+(-0.01,-0.01)..(0,3.3);
\filldraw [black] (1.2,-1.1) circle (1.3pt);\draw  (1.24,-0.9)  node { $u_{13}$};
\draw  (1.2,-1.1)--(1.9,-0.9);\draw  (1.2,-1.1)--(0 ,-0.5);
\draw  (1.2,-1.1)..controls+(-0.4,0.3)and+(0.1,-1)..(0,0.5);
\draw  (1.2,-1.1)..controls+(0.6,-0.3)and+(0.01,0.01)..(0,-3.3);
\filldraw [black] (-1.8,-1.7) circle (1.3pt);
\draw  (-1.95,-1.8)  node { $x$};
\draw  (-3.3,0)--(-1.8,-1.7)--(0,-3.3);
\draw  (-1.9,-0.9)--(-1.8,-1.7)--(-0.9,-1.9);
\draw  (-0.5,0)--(-1.8,-1.7)--(0,-0.5);
\draw   (-1.8,-1.7)..controls+(-0.5,0.2)and+(0.4,-1)..(-3.7,0);
\draw  (-3.7,0)..controls+(0.01,1)and+(-1,-1)..(0,3.3);
\draw   (-1.8,-1.7)..controls+(0.5,-0.5)and+(-0.4,0.2)..(0,-3.7);
\draw (0,-3.7)..controls+(1, 0.01)and+(-1,-1)..(3.3, 0);
\begin{scope}[xshift=8cm]
\filldraw [black] (0.5,0) circle (1.3pt);\draw  (0.5,0.2)  node { $u_{5}$};
 \filldraw [black] (3.3,0) circle (1.3pt);\draw  (3.38,-0.2)  node { $u_{6}$};
 \filldraw [black] (1.9,-0.3) circle (1.3pt);\draw  (1.9,-0.5)  node { $v_{4}$};
 \filldraw [black] (1.9,0.3) circle (1.3pt);\draw  (1.9,0.1)  node { $v_{2}$};
\filldraw [black](1.9,-0.9) circle (1.3pt);\draw  (2.15,-1)  node { $v_{10}$};
\filldraw [black] (1.9,0.9) circle (1.3pt);\draw  (1.9,0.68)  node { $v_{12}$};
\filldraw [black] (-0.5,0) circle (1.3pt);\draw  (-0.45,-0.2)  node { $u_{8}$};
\filldraw [black] (-1.9,0.3) circle (1.3pt);\draw  (-1.9,0.1)  node { $v_{3}$};
\filldraw [black](-3.3,0) circle (1.3pt);\draw  (-3.3,-0.25)  node { $u_{7}$};
\filldraw [black] (-1.9,-0.3) circle (1.3pt);\draw  (-1.9,-0.5)  node { $v_{11}$};
\filldraw [black] (-1.9,0.9) circle (1.3pt);\draw  (-1.9,0.7)  node { $v_{1}$};
\filldraw [black](-1.9,-0.9) circle (1.3pt);\draw  (-1.9,-1.1)  node { $v_{9}$};
\filldraw [black] (0,0.5) circle (1.3pt);\draw  (-0.2,0.5)  node { $v_{7}$};
\filldraw [black] (0.4,1.9) circle (1.3pt);\draw  (0.6,2)  node { $u_{10}$};
\filldraw [black]  (0,3.3) circle (1.3pt);\draw  (0.23,3.3)  node { $v_{5}$};
\filldraw [black] (-0.4, 1.9) circle (1.3pt);\draw  (-0.57,1.90)  node { $u_{1}$};
\filldraw [black] (-0.8, 1.9) circle (1.3pt);\draw  (-0.9,1.74)  node { $v_{2}$};
\filldraw [black] (0, 1.9) circle (1.3pt);\draw  (0,2.05)  node { $v_{14}$};
\filldraw [black] (0.9, 1.9) circle (1.3pt);\draw  (0.95,1.64)  node { $v_{9}$};
\filldraw [black](0,-0.5) circle (1.3pt);\draw  (0.2,-0.5)  node { $v_{6}$};
\filldraw [black](0.3,-1.9) circle (1.3pt);\draw  (0.6,-1.9)  node { $u_{11}$};
\filldraw [black] (0,-3.3) circle (1.3pt);\draw  (0.23,-3.31)  node { $v_{8}$};
\filldraw [black] (-0.3,-1.9) circle (1.3pt);\draw  (-0.04,-1.9)  node { $u_{4}$};
\filldraw [black](0.9,-1.9) circle (1.3pt);\draw  (1.09,-2.05)  node { $u_{12}$};
 \filldraw [black](-0.9,-1.9) circle (1.3pt);\draw  (-0.64,-1.9)  node { $u_{3}$};
\draw (3.3,0)--(0,3.3)--(-3.3,0)--(0,-3.3);
\draw (-0.5,0)--(0,0.5);
\draw  (-0.5, 0)..controls+(-1.7, 1.8)and+(-0.2,-0.5)..(0, 3.3);
\draw (0, 3.3)--(-0.8,1.9)--(0,0.5);\draw (0, 3.3)--(-0.4,1.9)--(0,0.5);
\draw (-0.4,1.9)--(0,1.9)--(0.4,1.9);
\draw (0, 3.3)--(0.4,1.9)--(0,0.5);\draw (0, 3.3)--(0.9,1.9)--(0,0.5);
\draw (0, -3.3)--(0.3,-1.9)--(0,-0.5);\draw (0, -3.3)--(0.9,-1.9)--(0,-0.5);
\draw (0, -3.3)--(-0.3,-1.9)--(0,-0.5);\draw (0, -3.3)--(-0.9,-1.9)--(0,-0.5);
\draw (-3.3 ,0)--(-1.9,0.3)--(-0.5,0);\draw (-3.3 ,0 )--(-1.9 ,0.9)--(-0.5, 0);
\draw (-3.3 ,0)--(-1.9,-0.3)--(-0.5,0);\draw (-3.3 ,0 )--(-1.9 ,-0.9)--(-0.5, 0);
\draw (3.3,0)--(3.1,0.5);\filldraw [black](3.1,0.5) circle (1.3pt);
\draw  (3.05,0.68)  node { $v_{13}$};
\draw (3.3 ,0)--(1.9,0.3)--(0.5,0);\draw (3.3 ,0 )--(1.9 ,0.9)--(0.5, 0);
\draw (3.3 ,0)--(1.9,-0.3)--(0.5,0);\draw (3.3 ,0 )--(1.9 ,-0.9)--(0.5, 0);
\filldraw [black] (-1.2,-1.1) circle (1.3pt);\draw  (-1.28,-0.9)  node { $u_{14}$};
\draw  (-1.2,-1.1)--(-1.9,-0.9);\draw  (-1.2,-1.1)--(0 ,-0.5);
\draw  (-1.2,-1.1)..controls+(0.4,0.3)and+(-0.1,-1)..(0,0.5);
\draw  (-1.2,-1.1)..controls+(0.1, -1.2)and+(-0.1,0.2)..(0,-3.3);
\filldraw [black] (1.2,1.1) circle (1.3pt);\draw  (1.28,0.9)  node { $u_{13}$};
\draw  (1.2,1.1)--(1.9,0.9);\draw  (1.2,1.1)--(0 ,0.5);
\draw  (1.2,1.1)..controls+(-0.4,-0.3)and+(-0.1,1.2)..(0,-0.5);
\draw  (1.2,1.1)..controls+(-0.1, 1.2)and+(0.1,-0.2)..(0,3.3);
\draw (1.8,-1.7)..controls+(-1, 1)and+(-2, -1.2)..(1.2,1.1);
\filldraw [black] (1.8,-1.7) circle (1.3pt);
\draw  (1.95,-1.8)  node { $x$};
\draw  (3.3,0)--(1.8,-1.7)--(0,-3.3);
\draw  (1.9,-0.9)--(1.8,-1.7)--(0.9,-1.9);
\draw  (0.5,0)--(1.8,-1.7)--(0,-0.5);
\draw   (1.8,-1.7)..controls+(0.5,0.2)and+(-0.4,-1)..(3.7,0);
\draw  (3.7,0)..controls+(-0.01,1)and+(1,-1)..(0,3.3);
\draw   (1.8,-1.7)..controls+(-0.5,-0.5)and+(0.4,0.2)..(0,-3.7);
\draw (0,-3.7)..controls+(-1, 0.01)and+(1,-1)..(-3.3, 0);
\end{scope}
\end{tikzpicture}
\end{center}
\end{figure}

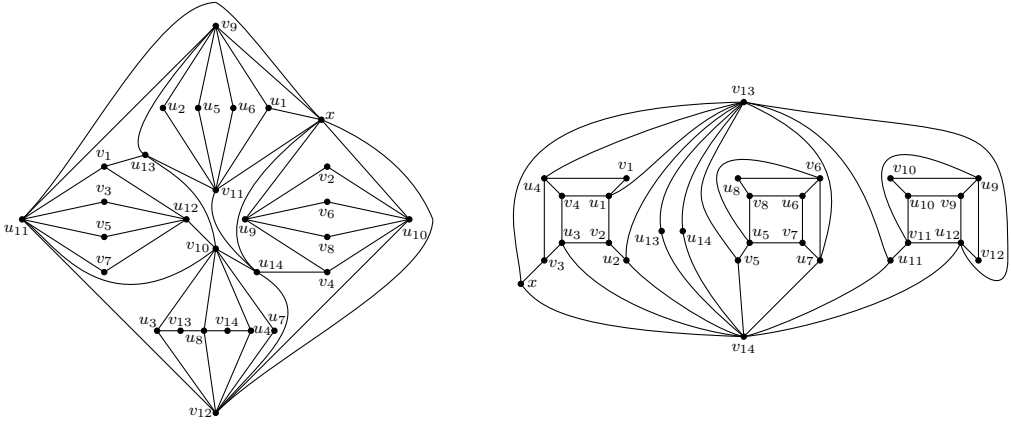
\begin{figure}[H]
\begin{center}
\begin{tikzpicture}
[scale=0.78]
\tikzstyle{every node}=[font=\tiny,scale=0.78]
\tiny
[inner sep=0pt]
\filldraw [black] (0.5,0) circle (1.3pt);\draw  (0.55,-0.2)  node { $u_{9}$};
\filldraw [black](3.3,0) circle (1.3pt);\draw  (3.4, -0.25)  node { $u_{10}$};
\filldraw [black](1.9,-0.3) circle (1.3pt);\draw  (1.9,-0.5)  node { $v_{8}$};
\filldraw [black](1.9,0.3) circle (1.3pt);\draw  (1.9,0.1)  node { $v_{6}$};
\filldraw [black](1.9,-0.9) circle (1.3pt);\draw  (1.9,-1.1)  node { $v_{4}$};
\filldraw [black](1.9,0.9) circle (1.3pt);\draw  (1.9,0.7)  node { $v_{2}$};
\filldraw [black](-0.5,0) circle (1.3pt);\draw  (-0.5,0.2)  node { $u_{12}$};
\filldraw [black](-1.9,0.3) circle (1.3pt);\draw  (-1.9,0.5)  node { $v_{3}$};
\filldraw [black](-3.3,0) circle (1.3pt);\draw  (-3.39,-0.18)  node { $u_{11}$};
\filldraw [black](-1.9,-0.3) circle (1.3pt);\draw  (-1.9,-0.1)  node { $v_{5}$};
\filldraw [black](-1.9,0.9) circle (1.3pt);\draw  (-1.9,1.1)  node { $v_{1}$};
\filldraw [black](-1.9,-0.9) circle (1.3pt);\draw  (-1.9,-0.7)  node { $v_{7}$};
\filldraw [black](0,-0.5) circle (1.3pt);\draw  (-0.3,-0.47)  node { $v_{10}$};
\filldraw [black] (-0.2,-1.9) circle (1.3pt);\draw  (-0.36,-2.05)  node { $u_{8}$};
\filldraw [black](0,-3.3) circle (1.3pt);\draw  (-0.25,-3.3)  node { $v_{12}$};
\filldraw [black](0.2, -1.9) circle (1.3pt);\draw  (0.2,-1.74)  node { $v_{14}$};
\filldraw [black](0.6, -1.9) circle (1.3pt);\draw  (0.8,-1.9)  node { $u_{4}$};
\filldraw [black](-0.6,- 1.9) circle (1.3pt);\draw  (-0.6,-1.74)  node { $v_{13}$};
\filldraw [black](1, -1.9) circle (1.3pt);\draw  (1.05,-1.7)  node { $u_{7}$};
\filldraw [black](-1, -1.9) circle (1.3pt);\draw  (-1.15,-1.75)  node { $u_{3}$};
\filldraw [black](0,0.5) circle (1.3pt);\draw  (0.3,0.45)  node { $v_{11}$};
\filldraw [black] (-0.3,1.9) circle (1.3pt);\draw  (-0.06,1.9)  node { $u_{5}$};
\filldraw [black](0,3.3) circle (1.3pt);\draw  (0.24,3.3)  node { $v_{9}$};
\filldraw [black] (0.3,1.9) circle (1.3pt);\draw  (0.54,1.9)  node { $u_{6}$};
\filldraw [black] (-0.9,1.9) circle (1.3pt);\draw  (-0.66,1.9)  node { $u_{2}$};
\filldraw [black] (0.9,1.9) circle (1.3pt);\draw  (1.08,2)  node { $u_{1}$};
\draw (3.3,0)--(0,-3.3)--(-3.3,0)--(0,3.3);
\draw (-0.5,0)--(0,-0.5);
\draw  (0,-0.5)..controls+(-1.9,-1.5)and+(0.5,-0.3)..(-3.3,0);
\draw (0, -3.3)--(1,-1.9)--(0,-0.5);\draw (0, -3.3)--(0.6,-1.9)--(0,-0.5);
\draw (0.6,-1.9)--(0.2,-1.9)--(-0.2,-1.9)--(-0.6,-1.9)--(-1,-1.9);
\draw (0, -3.3)--(-0.2,-1.9)--(0,-0.5);\draw (0, -3.3)--(-1,-1.9)--(0,-0.5);
\draw (0, 3.3)--(-0.3,1.9)--(0,0.5);\draw (0, 3.3)--(-0.9,1.9)--(0,0.5);
\draw (0, 3.3)--(0.3,1.9)--(0,0.5);\draw (0, 3.3)--(0.9,1.9)--(0,0.5);
\draw (3.3 ,0)--(1.9,-0.3)--(0.5,0);\draw (3.3 ,0 )--(1.9 ,-0.9)--(0.5, 0);
\draw (3.3 ,0)--(1.9,0.3)--(0.5,0);\draw (3.3 ,0 )--(1.9 ,0.9)--(0.5, 0);
\draw (-3.3 ,0)--(-1.9,-0.3)--(-0.5,0);\draw (-3.3 ,0 )--(-1.9 ,-0.9)--(-0.5, 0);
\draw (-3.3 ,0)--(-1.9,0.3)--(-0.5,0);\draw (-3.3 ,0 )--(-1.9 ,0.9)--(-0.5, 0);
\filldraw [black] (0.7,-0.9) circle (1.3pt);\draw  (0.95,-0.77)  node { $u_{14}$};
\draw  (0.7,-0.9)--(1.9,-0.9);\draw   (0.7,-0.9)--(0 ,-0.5);
\draw   (0.7,-0.9)..controls+(-0.7,0.6)and+(0.2,-0,01)..(0,0.5);
\draw   (0.7,-0.9)..controls+(1.2,-0.7)and+(0.8,0.94)..(0,-3.3);
\draw   (0.7,-0.9)..controls+(-0.35,0.3)and+(-2,-1.5)..(1.8,1.7);
\filldraw [black] (-1.2,1.1) circle (1.3pt);\draw  (-1.23,0.9)  node { $u_{13}$};
\draw  (-1.2,1.1)--(-1.9,0.9);\draw  (-1.2,1.1)--(0,0.5);
\draw  (-1.2,1.1)..controls+(0.4,-0.3)and+(-0.1,1)..(0,-0.5);
\draw  (-1.2,1.1)..controls+(-0.6,0.3)and+(-0.01,-0.01)..(0,3.3);
\filldraw [black] (1.8,1.7) circle (1.3pt);
\draw  (1.95,1.78)   node { $x$};
\draw  (3.3,0)--(1.8,1.7)--(0,3.3);
\draw  (1.8,1.7)--(0.9,1.9);
\draw  (0.5,0)--(1.8,1.7)--(0,0.5);
\draw   (1.8,1.7)..controls+(0.5,-0.2)and+(-0.4,1)..(3.7,0);
\draw  (3.7,0)..controls+(-0.01,-1)and+(1,1)..(0,-3.3);
\draw   (1.8,1.7)..controls+(-0.5,0.5)and+(0.4,-0.2)..(0,3.7);
\draw (0,3.7)..controls+(-1, -0.01)and+(1,1)..(-3.3, 0);
\begin{scope}[xshift=8cm]
\filldraw [black]     (1,2) circle (1.3pt);
\draw (1,2.2) node {\tiny $v_{13}$};
\filldraw [black]     (1,-2) circle (1.3pt);
\draw (1,-2.2) node {\tiny $v_{14}$};
\draw[-](1,2)..controls+(-1,-0.2)and+(0.5,0.4)..(-2.4,0.7);
\draw[-](1,2)..controls+(-1,-0.3)and+(0.6,0.2)..(-1.3,0.4);
\draw[-](1,2)..controls+(-1,-0.5)and+(-0.1,0.8)..(-1,-0.7);
\draw[-](1,2)..controls+(-0.9,-1.6)and+(-0.9,1.2)..(0.9,-0.7);
\draw[-](1,2)..controls+(1.8,-0.8)and+(0.3,1.0)..(2.3,-0.7);
\draw[-](1,2)..controls+(2.2,-0.4)and+(-0.2,0.01)..(3.5,-0.7);
\draw[-](4.7,-0.4)..controls+(0.01,-0.3)and+(-0.01,-0.8)..(5.5,-0.7);
\draw[-](1,2)..controls+(4.1,-0.7)and+(0.01,2.7)..(5.5,-0.7);
\draw[-](1,-2)..controls+(-1,0.1)and+(0.41,-0.9)..(-2.1,-0.4);
\draw[-](1,-2)..controls+(-0.5,0.1)and+(0.3,-0.4)..(-1,-0.7);
\draw[-](1,-2)--(0.9,-0.7);
\draw[-](1,-2)--(2.3,-0.7);
\draw[-](1,-2)..controls+(0.5,0.1)and+(-0.2,-0.4)..(3.5,-0.7);
\draw[-](1,-2)..controls+(1,0.1)and+(-0.41,-0.9)..(4.7,-0.4);
\filldraw [black]     (-2.8,-1.1) circle (1.3pt);
\draw (-2.6,-1.1) node {\tiny $x$};
\draw[-](1,2)..controls+(-5,-0.1)and+(0,0.9)..(-2.8,-1.1);
\draw[-](-2.8,-1.1)--(-2.4,-0.7);
\draw[-](1,-2)..controls+(-3,0.1)and+(0.3,-0.4)..(-2.8,-1.1);
\filldraw [black]     (-0.4,-0.2) circle (1.3pt);
\draw (-0.64,-0.35) node {\tiny $u_{13}$};
\filldraw [black]     (-0.04,-0.2) circle (1.3pt);
\draw (0.25,-0.35) node {\tiny $u_{14}$};
\draw[-](1,2)..controls+(-1.9,-1.6)and+(-1.9,1.6)..(1,-2);
\draw[-](1,2)..controls+(-1.4,-1.6)and+(-1.4,1.6)..(1,-2);
\filldraw [black]     (-2.1,0.4) circle (1.3pt);
\draw (-1.92,0.25) node {\tiny $v_{4}$};
\filldraw [black]     (-2.1,-0.4) circle (1.3pt);
\draw (-1.92,-0.25) node {\tiny $u_{3}$};
\filldraw [black]     (-1.3,0.4) circle (1.3pt);
\draw (-1.48,0.25) node {\tiny $u_{1}$};
\filldraw [black]     (-1.3,-0.4) circle (1.3pt);
\draw (-1.48,-0.25) node {\tiny $v_{2}$};
\draw[-](-2.1,0.4)--(-2.1,-0.4)--(-1.3,-0.4)--(-1.3,0.4)--(-2.1,0.4);
\filldraw [black]     (-2.4,0.7) circle (1.3pt);
\draw[-](-2.4,0.7)--(-2.1,0.4);
\draw (-2.6,0.55) node {\tiny $u_{4}$};
\draw[-](-1,0.7)--(-2.4,0.7)--(-2.4,-0.7);
\filldraw [black]     (-1,0.7) circle (1.3pt);
\draw[-](-1,0.7)--(-1.3,0.4);
\draw (-1,0.9) node {\tiny $v_{1}$};
\filldraw [black]     (-2.4,-0.7) circle (1.3pt);
\draw[-](-2.1,-0.4)--(-2.4,-0.7);
\draw (-2.18,-0.78) node {\tiny $v_{3}$};
\filldraw [black]     (-1.0,-0.7) circle (1.3pt);
\draw[-](-1.0,-0.7) --(-1.3,-0.4);
\draw (-1.25,-0.7) node {\tiny $u_{2}$};
\filldraw [black]     (2,0.4) circle (1.3pt);
\draw (1.8,0.25) node {\tiny $u_{6}$};
\filldraw [black]     (2,-0.4) circle (1.3pt);
\draw (1.8,-0.25) node {\tiny $v_{7}$};
\filldraw [black]     (1.1,0.4) circle (1.3pt);
\draw (1.3,0.25) node {\tiny $v_{8}$};
\filldraw [black]     (1.1,-0.4) circle (1.3pt);
\draw (1.3,-0.25) node {\tiny $u_{5}$};
\draw[-](2,0.4)--(2,-0.4)--(1.1,-0.4)--(1.1,0.4)--(2,0.4);
\filldraw [black]     (2.3,0.7) circle (1.3pt);
\draw[-](2.3,0.7)--(2,0.4);
\draw (2.2,0.9) node {\tiny $v_{6}$};
\draw[-](0.9,0.7)--(2.3,0.7)--(2.3,-0.7);
\filldraw [black]     (0.9,0.7) circle (1.3pt);
\draw[-](0.9,0.7)--(1.1,0.4);
\draw (0.8,0.5) node {\tiny $u_{8}$};
\filldraw [black]     (2.3,-0.7) circle (1.3pt);
\draw[-](2,-0.4)--(2.3,-0.7);
\draw (2.05,-0.7) node {\tiny $u_{7}$};
\filldraw [black]     (0.9,-0.7) circle (1.3pt);
\draw[-](0.9,-0.7) --(1.1,-0.4);
\draw (1.15,-0.7) node {\tiny $v_{5}$};
\draw[-](1.1,-0.4)..controls+(-0.4,0.5)and+(-2.7,1)..(2.3,0.7);
\filldraw [black]     (4.7,0.4) circle (1.3pt);
\draw (4.5,0.25) node {\tiny $v_{9}$};
\filldraw [black]     (4.7,-0.4) circle (1.3pt);
\draw (4.48,-0.25) node {\tiny $u_{12}$};
\filldraw [black]     (3.8,0.4) circle (1.3pt);
\draw (4.05,0.25) node {\tiny $u_{10}$};
\filldraw [black]     (3.8,-0.4) circle (1.3pt);
\draw (4.02,-0.28) node {\tiny $v_{11}$};
\draw[-](4.7,0.4)--(4.7,-0.4)--(3.8,-0.4)--(3.8,0.4)--(4.7,0.4);
\filldraw [black]     (5,0.7) circle (1.3pt);
\draw[-](5,0.7)--(4.7,0.4);
\draw (5.2,0.56) node {\tiny $u_{9}$};
\draw[-](3.5,0.7)--(5,0.7)--(5,-0.7);
\filldraw [black]     (3.5,0.7) circle (1.3pt);
\draw[-](3.5,0.7)--(3.8,0.4);
\draw (3.75,0.85) node {\tiny $v_{10}$};
\filldraw [black]     (5,-0.7) circle (1.3pt);
\draw[-](4.7,-0.4)--(5,-0.7);
\draw (5.25,-0.6) node {\tiny $v_{12}$};
\filldraw [black]     (3.5,-0.7) circle (1.3pt);
\draw[-](3.8,-0.4) --(3.5,-0.7);
\draw (3.85,-0.7) node {\tiny $u_{11}$};
\draw[-](3.8,-0.4)..controls+(-0.3,0.5)and+(-2.5,1.2)..(5,0.7);
\end{scope}
\end{tikzpicture}
\caption{A planar decomposition of $K_{1,14,14}$}
\label{figure 6}
\end{center}
\end{figure}

Lemma follows from Cases 1, 2 and 3.\end{proof}

\begin{theorem}\label{main1} The thickness of the complete 3-partite graph $K_{1,n,n}$ is
$$\theta(K_{1,n,n})=\big\lceil\frac{n+2}{4}\big\rceil.$$
\end{theorem}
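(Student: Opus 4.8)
The plan is to combine the three preceding lemmas via a residue analysis modulo $4$, since $\lceil\frac{n+2}{4}\rceil$ takes the value $p+1$ when $n\in\{4p,4p+1,4p+2\}$ and the value $p+2$ when $n=4p+3$. The lower bound is uniform across all residue classes: because $K_{n,n}\subset K_{1,n,n}$, monotonicity of thickness under subgraph containment gives $\theta(K_{1,n,n})\geq\theta(K_{n,n})=\lceil\frac{n+2}{4}\rceil$ directly from Lemma \ref{2.1}. So the entire content of the theorem reduces to matching this with an upper bound of $\lceil\frac{n+2}{4}\rceil$ in each case.

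For the upper bound I would split into the four residue classes. When $n\equiv 0$ or $3\pmod 4$, Lemma \ref{2.2} already asserts the exact equality $\theta(K_{1,n,n})=\lceil\frac{n+2}{4}\rceil$, so nothing further is needed. When $n\equiv 2\pmod 4$, write $n=4p+2$; then Lemma \ref{2.3} supplies a planar decomposition of $K_{1,4p+2,4p+2}$ into $p+1$ subgraphs, giving $\theta(K_{1,n,n})\leq p+1=\lceil\frac{n+2}{4}\rceil$, which meets the lower bound.

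The only class requiring a small additional argument is $n\equiv 1\pmod 4$. Here I would write $n=4p+1$ and use the containment $K_{1,4p+1,4p+1}\subset K_{1,4p+2,4p+2}$, obtained by adjoining one vertex to each of the two size-$n$ parts. Applying Lemma \ref{2.3} to the larger graph yields
$$\theta(K_{1,4p+1,4p+1})\leq\theta(K_{1,4p+2,4p+2})\leq p+1,$$
and since $\lceil\frac{(4p+1)+2}{4}\rceil=p+1$ this again coincides with the lower bound $\theta(K_{4p+1,4p+1})=p+1$ from Lemma \ref{2.1}. Thus $\theta(K_{1,4p+1,4p+1})=p+1=\lceil\frac{n+2}{4}\rceil$.

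I expect no genuine obstacle in the theorem itself: all the combinatorial difficulty has been absorbed into the explicit constructions of Lemma \ref{2.3} (and the background Lemma \ref{2.1} on $K_{n,n}$). The remaining work is purely a bookkeeping assembly, and the one point demanding a little care is verifying that the ceiling function evaluates to $p+1$ consistently on the three classes $n\in\{4p,4p+1,4p+2\}$ and to $p+2$ on $n=4p+3$, so that the sandwich inequalities close to equalities in every case; the $n\equiv 1$ step is the place where the "extra" embedding into the $n\equiv 2$ graph must be invoked rather than a direct construction.
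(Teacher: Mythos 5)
Your proposal is correct and follows essentially the same route as the paper: Lemma \ref{2.2} for $n\equiv 0,3 \pmod 4$, the decomposition of Lemma \ref{2.3} for the upper bound when $n\equiv 1,2 \pmod 4$ (with the containment $K_{1,4p+1,4p+1}\subset K_{1,4p+2,4p+2}$ handling the residue-$1$ case), and Lemma \ref{2.1} for the lower bound. The only cosmetic difference is that the paper anchors the lower bound for $n=4p+1$ at $\theta(K_{4p,4p})=p+1$ while you use $\theta(K_{4p+1,4p+1})=p+1$; both give the same value.
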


\begin{proof}
When $n=4p,4p+3$, the theorem follows from Lemma \ref{2.2}.

When $n=4p+1,n=4p+2$, from Lemma \ref{2.3},  we have $\theta(K_{1,4p+2,4p+2})\leq p+1$. Since $\theta(K_{4p,4p})=p+1$ and $K_{4p,4p}\subset K_{1,4p+1,4p+1}\subset K_{1,4p+2,4p+2}$, we obtain $$p+1\leq \theta(K_{1,4p+1,4p+1})\leq \theta(K_{1,4p+2,4p+2})\leq p+1.$$
Therefore, $\theta(K_{1,4p+1,4p+1})=\theta(K_{1,4p+2,4p+2})=p+1$.

Summarizing the above, the theorem is obtained.\end{proof}

\section{The thickness of $K_{2,n,n}$ }

\begin{lemma}\label{2.4} There exists a planar decomposition of the complete 3-partite graph $K_{2,4p+1,4p+1}$ $(p\geq 0)$ with $p+1$ subgraphs.\end{lemma}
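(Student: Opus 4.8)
The plan is to mimic the construction of Lemma \ref{2.3} as closely as possible, with the single apex $x$ and the four added bipartite vertices $u_{4p+1},u_{4p+2},v_{4p+1},v_{4p+2}$ used there replaced by two apices $x_1,x_2$ and only two added bipartite vertices $u_{4p+1},v_{4p+1}$. Let the vertex partition of $K_{2,4p+1,4p+1}$ be $(X,U,V)$ with $X=\{x_1,x_2\}$, $U=\{u_1,\dots,u_{4p+1}\}$ and $V=\{v_1,\dots,v_{4p+1}\}$, and start from the Chen--Yin decomposition $\{G_1,\dots,G_{p+1}\}$ of $K_{4p,4p}$ of Figure \ref{figure 1}. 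For each $r$ with $1\le r\le p+1$ I would build $\widehat G_r$ from $G_r$ by inserting $x_1,x_2,u_{4p+1},v_{4p+1}$ into suitably chosen faces, adding edges, and deleting a few edges of $G_r$ that are then reassigned to $G_{p+1}$; the target is that $\{\widehat G_1,\dots,\widehat G_{p+1}\}$ covers every edge of $K_{2,4p+1,4p+1}$ exactly once with each $\widehat G_r$ planar. Note the vertex arithmetic is favourable: passing from $K_{4p,4p}$ to $K_{2,4p+1,4p+1}$ adds four vertices, exactly as the five-vertex addition of Lemma \ref{2.3} minus one, so essentially the same insertion machinery applies.

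The only tool needed is the \emph{parallel paths modification} introduced before Case 1 of Lemma \ref{2.3}: for every pair such as $v_{4r-3},v_{4r-1}$ or $u_{4r-1},u_{4r}$, the $2p-2$ length-two parallel paths joining them may be freely reordered without destroying planarity, which lets me expose any prescribed short list of $U$- or $V$-vertices on a common face and plant a new vertex there. Each apex $x_i$ must reach $4p+1$ vertices of $U$ and $4p+1$ of $V$, so I would route four $x_i$-to-$U$ edges and four $x_i$-to-$V$ edges into each of $\widehat G_1,\dots,\widehat G_p$, placing $x_1$ and $x_2$ in two different faces of each $G_r$, arranged so that across $r=1,\dots,p$ every $u_j,v_j$ with $j\le 4p$ receives exactly one edge to $x_1$ and exactly one to $x_2$; the leftover edges $x_iu_{4p+1},x_iv_{4p+1}$ fall into $\widehat G_{p+1}$. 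Simultaneously, the bipartite edges of $K_{4p+1,4p+1}$ missing from $K_{4p,4p}$, namely those incident with $u_{4p+1}$ or $v_{4p+1}$, are distributed four per subgraph at each of $u_{4p+1},v_{4p+1}$; this fills essentially the same per-subgraph budget that the four added vertices occupied in Lemma \ref{2.3}, reorganized into two degree-four vertices rather than four degree-two ones. As there, I expect to split according to the parity of $p$ and to record the incidences in tables analogous to Table \ref{tab 1} and Table \ref{tab 3}.

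Once the edges placed in $\widehat G_1,\dots,\widehat G_p$ are tabulated, the edges of $K_{2,4p+1,4p+1}$ not yet used are determined by complementation exactly as in Table \ref{tab 2} and Table \ref{tab 4}; these form $\widehat G_{p+1}$, for which I would exhibit an explicit planar drawing (an analogue of Figure \ref{figure 2} and Figure \ref{figure 3}) with $x_1,x_2$ on the outer boundary and the matching-type edges drawn along a row. The small values of $p$, where the generic parity construction breaks because the index arithmetic mod $4p$ collides, would be handled by hand with explicit decompositions in the spirit of Case 3 and Figures \ref{figure 4}--\ref{figure 6}.

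The main obstacle I anticipate is the bookkeeping needed to secure both properties at once: every edge must be covered exactly once, so the four-per-subgraph distributions for $x_1$, $x_2$, $u_{4p+1}$ and $v_{4p+1}$ must be mutually compatible and must jointly tile the index set $\{1,\dots,4p\}$; and after all insertions each $\widehat G_r$ must remain planar. Fitting two apices rather than one into a single planar $G_r$ is the delicate point, since each apex already demands a face meeting four $U$- and four $V$-vertices, so making their two host faces disjoint and compatible with the faces used to plant $u_{4p+1},v_{4p+1}$ is where the parity cases and the low-$p$ exceptions originate. Checking planarity of the residual $\widehat G_{p+1}$ is then the remaining step, which I expect to be routine once the right drawing is fixed.
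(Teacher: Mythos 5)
Your plan is essentially the paper's own proof: both start from the Chen--Yin decomposition of $K_{4p,4p}$, insert $x_1,x_2,u_{4p+1},v_{4p+1}$ into faces of each $G_r$ exposed by parallel--paths modification, delete a few edges of $G_r$ that are reassigned to the last subgraph, tabulate the added edges by the parity of $r$ and of $p$, collect the complement into a planar $\widehat G_{p+1}$, and treat small $p$ ($p\le 3$) by explicit drawings. The only divergence is quantitative and immaterial: the paper routes three (not four) edges from each of $x_1,x_2$ to each of $U$ and $V$ per subgraph $\widehat G_r$, leaving a larger residual set of apex edges for $\widehat G_{p+1}$, which is exactly the kind of bookkeeping adjustment you flag as the remaining work.
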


\begin{proof}~~Let  $(X,U,V)$ be the vertex partition of the complete 3-partite graph $K_{2,n,n}$, in which $X=\{x_1,x_2\}$, $U=\{u_1, \dots, u_{n}\}$ and $V=\{v_1, \dots,v_{n}\}$. When $n=4p+1$, we will construct a planar decomposition of $K_{2,4p+1,4p+1}$ with $p+1$ planar subgraphs.

The construction is analogous to that in Lemma \ref{2.3}. Let $\{G_1,$ $G_2,\dots,G_{p+1}\}$ be a planar decomposition of $K_{4p,4p}$ given in \cite{CYi16}. In the following, for $1\leq r\leq p+1$, by adding vertices $x_1,x_2,u_{4p+1},v_{4p+1}$ to $G_r$, deleting some edges from $G_{r}$ and adding some edges to $G_{r}$, we will get a new planar graph $\widehat{G}_r$ such that $\{\widehat{G}_1,\dots,\widehat{G}_{p+1}\}$ is a planar decomposition of $K_{2,4p+1,4p+1}$. All the subscripts of vertices are taken modulo $4p$, except that of $u_{4p+1} $ and $v_{4p+1}$ (the vertices we added to $G_{r}$).

{\bf Case 1.}~  When $p$ is even and $p> 2$.

\noindent{\bf (a)}~ The construction for $\widehat{G}_r$ , $1\leq r\leq p$.

\noindent {\bf Step 1:}~~ When $r$ is odd, place the vertex $x_1$,$x_2$ and $u_{4p+1}$ in the face $1$,$2$ and $5$ of $G_{r}$ respectively. Delete edges $v_{4r-3}u_{4r}$ and $u_{4r-1}v_{4r-2}$ from $G_r$.

When $r$ is even, place the vertex $x_1$,$x_2$ and $u_{4p+1}$ in the face $3$,$4$ and $5$ of $G_{r}$, respectively. Delete edge $v_{4r}u_{4r-3}$ and $u_{4r-2}v_{4r-1}$ from $G_r$.

\noindent {\bf Step 2:}~~  Do parallel paths modifications, then join $x_1$, $x_2$, $u_{4p+1}$ and $v_{4p+1}$ to some $u_{j}$ and $v_{j}$, as shown in Table \ref{tab 5}.

\begin{table}[H]
\centering\caption {The edges we add to $G_{r} (1\leq r\leq p)$ in Case 1}\label{tab 5}\tiny
\renewcommand{\arraystretch}{1.5}
\begin{tabular}{|*{10}{c|}}
\hline
\multicolumn{2}{|c|}{\diagbox { edge}{ subscript}{ case} } &  \multicolumn{4}{|c|}{$r$ is odd}  & \multicolumn{4}{|c|}{$r$ is even} \\
\hline
\multicolumn{2}{|c|}{$x_1u_j$}   & \multicolumn{3}{|c|}{$4r-1,4r$} & $4r+5~(U_1^r)$ & \multicolumn{3}{|c|}{$4r-3,4r-2$} & $4r+8~(U_2^r)$    \\
\hline
\multicolumn{2}{|c|}{$x_1v_j$}  & \multicolumn{3}{|c|}{$4r-3,4r-1$} & $4r+1~(V_1^r)$ & \multicolumn{3}{|c|}{$4r-2,4r$} & $4r+4~(V_2^r)$  \\
\hline
\multicolumn{2}{|c|}{$x_2u_j$}     & \multicolumn{3}{|c|}{$4r-1,4r$} & $4r+3~(U_2^r)$ & \multicolumn{3}{|c|}{$4r-3,4r-2$} & $4r+2~(U_1^r)$    \\
\hline
\multicolumn{2}{|c|}{$x_2v_j$}   & \multicolumn{3}{|c|}{$4 r-2,4r$} & $4r+7~(V_1^r)$ & \multicolumn{3}{|c|}{$4r-3,4r-1$} & $4r+6~(V_2^r)$  \\
\hline
\multicolumn{2}{|c|}{ $u_{4p+1}v_j$}   & \multicolumn{8}{|c|}{$4r-2,4r-1$}  \\
\hline
\multicolumn{2}{|c|}{ $v_{4p+1}u_j$ }   & \multicolumn{4}{|c|}{$4r+4,4r+8~(U_2^r)$}  & \multicolumn{4}{|c|}{$4r-11,4r-7~(U_1^r)$} \\
\hline
\end{tabular}
\end{table}

\noindent{\bf (b)}~ The construction for $\widehat{G}_{p+1}$.

We list the edges that belong to $K_{2,4p+1,4p+1}$ but not to any $\widehat{G}_{r}$, $1\leq r\leq p$, as shown in Table \ref{tab 6}.
Then $\widehat{G}_{p+1}$ is the graph that consists of the edges in Table \ref{tab 6}, Figure \ref{figure 7} shows $\widehat{G}_{p+1}$ is a planar graph.

\begin{table}[htp]
\centering\caption {The edges of $\widehat{G}_{p+1}$ in Case 1} \label{tab 6}
\tiny
\renewcommand{\arraystretch}{1.6}
\begin{tabular}{|*{2}{c|}}
\hline
\Gape[8pt]edges & subscript\\
\hline
$x_1u_j$ & \multirow{2}{*}{$j=4r-2,4r+3,4p+1.$ ($r=1,3,\dots,p-1.$)} \\
\cline{1-1}
$x_1v_j$ &  \\
\hline
$x_2u_j$ & \multirow{2}{*}{$j=4r-7,4r,4p+1.$ ($r=2,4,\dots,p.$)} \\
\cline{1-1}
$x_2v_j$ &  \\
\hline
$u_{4p+1}v_j$ &$j=4r-3,4r.$ ($r=1,2,\dots,p.$) \\
\cline{1-1}\cline{2-2}
$v_{4p+1}u_j$ & $j=4r-2,4r-1.$ ($r=1,2,\dots,p.$) \\
\hline
$v_{4r-3}u_{4r},v_{4r-2}u_{4r-1} $ &  $r=1,3,\dots,p-1.$ \\
\hline
$u_{4r-3}v_{4r},u_{4r-2}v_{4r-1} $ &  $r=2,4,\dots,p.$  \\
\hline
$u_{j}v_{j}$ &  $j=1,\dots,4p+1.$  \\
\hline
\end{tabular}
\end{table}
\vskip-0.3cm
\begin{figure}[H]
\begin{center}
\begin{tikzpicture}
\tiny
[inner sep=0pt]
\filldraw [black] (2.6,1.8) circle (1.4pt)
                  (2.6,-1.3) circle (1.4pt)
                  (-2.6,1.8) circle (1.4pt)
                  (-2.6,-1.3) circle (1.4pt);
\draw (2.6,1.8)-- (-2.6,1.8) ;\draw (2.6,-1.3)-- (-2.6,-1.3) ;
\draw[-] (2.6,1.8)..controls+(4,0.1)and+(4,0.3).. (2.6,-1.3);
\draw[-] (-2.6,1.8)..controls+(-4,-0.3)and+(-4,-0.1).. (-2.6,-1.3);
\draw[-] (-2.6,1.8)..controls+(5.4,-0.2)and+(-5,0.2).. (2.6,-1.3);
\draw  (2.6,2)  node { $x_2$};\draw  (-2.6,2) node { $v_{4p+1}$};
\draw  (2.6,-1.5)  node { $u_{4p+1}$};\draw  (-2.6,-1.5) node { $x_1$};
\draw (-2.6,-1.3)-- (-4.4,-0.5) ;\draw (-2.6,-1.3)--(-2,-0.5)  ;
\draw[-] (-2.6,-1.3)..controls+(-2,0.1)and+(-1,-1.8)..(-4.4,1);
\draw[-] (-2.6,-1.3)..controls+(0.2,0.6)and+(-0.4,-0.4)..(-2,1);
\draw (2.6,1.8)-- (2,1) ;\draw (2.6,1.8)-- (4.4,1)  ;
\draw[-] (2.6,1.8)..controls+(-0.3,-0.1)and+(-0.6,1.7)..(2,-0.5);
\draw[-] (2.6,1.8)..controls+(2,-0.1)and+(0.96,1.8)..(4.4,-0.5);
\draw (-2.6,1.8)--  (-1,1);\draw (-2.6,1.8)--(-2,1) ;
\draw (-2.6,1.8)-- (-3.4,1)  ;\draw (-2.6,1.8)-- (-4.4,1) ;
\draw (2.6,-1.3)--  (1,-0.5);\draw (2.6,-1.3)--(2,-0.5)  ;
\draw (2.6,-1.3)-- (3.4,-0.5)  ;\draw (2.6,-1.3)-- (4.4,-0.5)  ;
\filldraw [black] (-1,1) circle (1.4pt)
                  (-1,-0.5) circle (1.4pt)
                  (-2,1) circle (1.4pt)
                  (-2,-0.5) circle (1.4pt);
\filldraw [black] (-3.4,1) circle (1.4pt)
                  (-3.4,-0.5) circle (1.4pt)
                  (-4.4,1) circle (1.4pt)
                  (-4.4,-0.5) circle (1.4pt);
\draw  (-3.4,1)--(-3.4,-0.5) ;\draw  (-4.4,1) --(-4.4,-0.5);
\draw (-4.4,-0.5)--(-3.4,1);
\draw  (-4.62,-0.5)  node { $v_{2}$};\draw  (-4.64,1) node { $u_{2}$};
\draw (-3.64,-0.5)  node { $v_{3}$};\draw   (-3.64,1)node { $u_{3}$};
\draw  (-1,1)--(-1,-0.5) ;\draw  (-2,1) --(-2,-0.5);
\draw (-2,-0.5)--(-1,1);
\draw  (-0.53,-0.5)  node { $v_{4p-2}$};\draw  (-0.53,1) node { $u_{4p-2}$};
\draw (-1.48,-0.5)  node { $v_{4p-1}$};\draw   (-1.53,1)node { $u_{4p-1}$};
\filldraw [black] (1,1) circle (1.4pt)
                  (1,-0.5) circle (1.4pt)
                  (2,1) circle (1.4pt)
                  (2,-0.5) circle (1.4pt);
\filldraw [black] (3.4,1) circle (1.4pt)
                  (3.4,-0.5) circle (1.4pt)
                  (4.4,1) circle (1.4pt)
                  (4.4,-0.5) circle (1.4pt);
\draw  (3.4,1)--(3.4,-0.5) ;\draw  (4.4,1) --(4.4,-0.5);
\draw (4.4,-0.5)--(3.4,1);
\draw  (4.64,-0.5)  node { $v_{1}$};\draw  (4.62,1) node { $u_{1}$};
\draw (3.64,-0.5)  node { $v_{4}$};\draw   (3.64,1)node { $u_{4}$};
\draw  (1,1)--(1,-0.5) ;\draw  (2,1) --(2,-0.5);
\draw (2,-0.5)--(1,1);
\draw  (2.34,-0.5)  node { $v_{4p}$};\draw  (2.34,1) node { $u_{4p}$};
\draw (1.44,-0.43)  node { $v_{4p-3}$};\draw   (1.45,0.97)node { $u_{4p-3}$};
\filldraw [black] (2.5,0.25) circle (1pt)
                  (2.7,0.25) circle (1pt)
                  (2.9,0.25) circle (1pt);
\filldraw [black] (-2.5,0.25) circle (1pt)
                  (-2.7,0.25) circle (1pt)
                  (-2.9,0.25) circle (1pt);
\end{tikzpicture}
\vskip-0.3cm
\caption{The graph $\widehat{G}_{p+1}$ in Case 1}
\label{figure 7}
\end{center}
\end{figure}
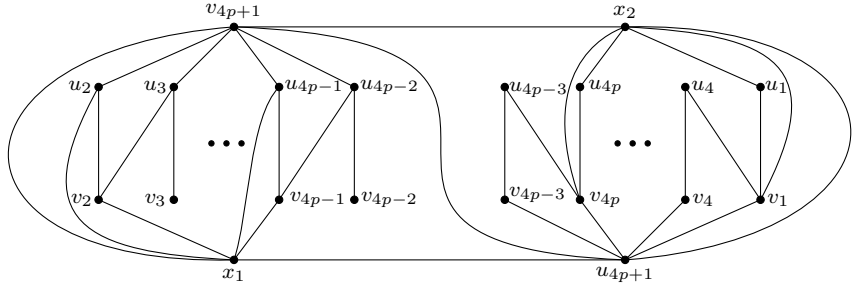

Therefore, $\{\widehat{G}_1,\dots,\widehat{G}_{p+1}\}$ is a planar decomposition of $K_{2,4p+1,4p+1}$ in this case. In Figure \ref{figure 13}, we draw the planar decomposition of $K_{2,17,17}$ it is the smallest example for the Case $1$.  We denote vertex $u_i$ and $v_i$ by $i$ and $i'$ respectively in this figure.

\begin{figure}[H]
\begin{center}
\begin{tikzpicture}
[scale=0.85]
\tikzstyle{every node}=[font=\tiny,scale=0.7]
\tiny
[inner sep=0pt]
\filldraw [black] (0.5,0) circle (1.3pt);\draw  (0.5,-0.18)  node { $1$};
\filldraw [black](3.6,0) circle (1.3pt);\draw  (3.58,-0.18)  node { $2$};
\filldraw [black](1.9,-1.1) circle (1.3pt);\draw  (1.9,-1.28)  node { $16'$};
\filldraw [black](1.9,-0.7) circle (1.3pt);\draw  (1.9,-0.88)  node { $14'$};
\filldraw [black](1.9,-0.3) circle (1.3pt);\draw  (1.9,-0.48)  node { $12'$};
\filldraw [black] (1.9,0.1) circle (1.3pt);\draw  (1.9,-0.08)  node { $10'$};
\filldraw [black] (1.9,0.5) circle (1.3pt);\draw  (1.9,0.32)  node { $8'$};
\filldraw [black](1.9,0.9) circle (1.3pt);\draw  (1.9,0.72)  node { $6'$};
\filldraw [black] (-0.5,0) circle (1.3pt);\draw  (-0.54,-0.2)  node { $4$};
\filldraw [black](-3.6,0) circle (1.3pt);\draw  (-3.6,-0.2)  node { $3$};
 \filldraw [black](-1.9,0.9) circle (1.3pt);\draw  (-1.94,1.05)  node { $5'$};
 \filldraw [black] (-1.9,0.5) circle (1.3pt);\draw  (-1.9,0.68)  node { $7'$};
  \filldraw [black] (-1.9,0.1) circle (1.3pt);\draw  (-1.9,0.28)  node { $9'$};
 \filldraw [black](-1.9,-0.3) circle (1.3pt);\draw  (-1.88,-0.12)  node { $13'$};
 \filldraw [black] (-1.9,-0.7) circle (1.3pt);\draw  (-1.88,-0.52)  node { $15'$};
   \filldraw [black] (-1.9,-1.1) circle (1.3pt);\draw  (-1.88,-0.92)  node { $11'$};
\filldraw [black](0,-0.5) circle (1.3pt);\draw  (0.22,-0.51)  node { $2'$};
\filldraw [black](0,-3.6) circle (1.3pt);\draw  (0.2,-3.6)  node { $4'$};
\filldraw [black] (1,-1.9) circle (1.3pt);\draw  (1.03,-1.74)  node { $12$};
\filldraw [black] (0.7,-1.9) circle (1.3pt);\draw  (0.65,-1.74)  node { $17'$};
\filldraw [black] (0.4,-1.9) circle (1.3pt);\draw  (0.25,-1.9)  node { $8$};
\filldraw [black] (0.05,-1.9) circle (1.3pt);\draw  (-0.14,-1.9)  node { $16$};
 \filldraw [black] (-0.33,-1.9) circle (1.3pt);\draw  (-0.52,-1.9)  node { $15$};
\filldraw [black](-0.7,-1.9) circle (1.3pt);\draw  (-0.88,-1.9)  node { $11$};
\filldraw [black](-1.1,-1.9) circle (1.3pt);\draw  (-1.1,-1.74)  node { $7$};

\filldraw [black] (0,0.5) circle (1.3pt);\draw  (0.24,0.49)  node { $3'$};
\filldraw [black] (0,3.6) circle (1.3pt);\draw  (0.2, 3.64)  node { $1'$};
\filldraw [black](0.9, 1.9) circle (1.3pt);\draw  (1.09,1.9)  node { $14$};
\filldraw [black](0.5, 1.9) circle (1.3pt);\draw  (0.67,1.9)  node { $13$};
\filldraw [black](0.1, 1.9) circle (1.3pt);\draw  (0.28,1.9)  node { $10$};
\filldraw [black](-0.3,1.9) circle (1.3pt);\draw  (-0.15,1.9)  node { $6$};
\filldraw [black](-0.7, 1.9) circle (1.3pt);\draw  (-0.55,1.9)  node { $5$};
\filldraw [black](-1.1, 1.9) circle (1.3pt);\draw  (-0.95,1.9)  node { $9$};
\draw (3.6,0)--(0,-3.6)--(-3.6,0)--(0,3.6)--(3.6,0);
\draw (0,-0.5)-- (0.5,0)--(0,0.5)--(-0.5,0)--(0,-0.5);
\draw  (0.5,0)..controls+(1.7,-1.3)and+(0.5,0.6)..(0, -3.6);
\draw  (0,0.5)..controls+(1.7,1.3)and+(-0.5,0.5)..(3.6,0);
\draw (0, -3.6)--(-1.1,-1.9)--(0,-0.5);\draw (0, -3.6)--(-0.7,-1.9)--(0,-0.5);
\draw (0, -3.6)--(-0.35,-1.9)--(0,-0.5);\draw (0, -3.6)--(0.05,-1.9)--(0,-0.5);
\draw (0, -3.6)--(0.4,-1.9)--(0,-0.5);\draw (0, -3.6)--(1,-1.9)--(0,-0.5);
\draw (0.4,-1.9)--(1,-1.9);
\draw (0, 3.6)--(0.9,1.9)--(0,0.5);\draw (0, 3.6)--(0.5,1.9)--(0,0.5);
\draw (0, 3.6)--(0.1,1.9)--(0,0.5);\draw (0, 3.6)--(-0.3,1.9)--(0,0.5);
\draw (0, 3.6)--(-0.7,1.9)--(0,0.5);\draw (0, 3.6)--(-1.1,1.9)--(0,0.5);
\draw (-3.6 ,0)--(-1.9,0.9)--(-0.5,0);\draw (-3.6 ,0 )--(-1.9 ,0.5)--(-0.5, 0);
\draw (-3.6 ,0)--(-1.9,0.1)--(-0.5,0);\draw (-3.6 ,0 )--(-1.9 ,-0.3)--(-0.5, 0);
\draw (-3.6 ,0)--(-1.9,-0.7)--(-0.5,0);\draw (-3.6 ,0 )--(-1.9 ,-1.1)--(-0.5, 0);
\draw (3.6 ,0 )--(1.9 ,0.9)--(0.5, 0);\draw (3.6 ,0 )--(1.9 ,0.5)--(0.5, 0);
\draw (3.6 ,0)--(1.9,0.1)--(0.5,0);\draw (3.6 ,0)--(1.9,-0.3)--(0.5,0);
\draw (3.6 ,0 )--(1.9 ,-0.7)--(0.5, 0);\draw (3.6 ,0 )--(1.9 ,-1.1)--(0.5, 0);
\filldraw [black] (-1.7,1.5) circle (1.3pt);
\draw  (-1.45,1.5)  node { $x_1$};
\draw  (-1.7,1.5)--(-1.9,0.9);\draw  (-1.7,1.5)--(0 ,0.5);
\draw   (-1.7,1.5)--(-0.5, 0 );\draw   (-1.7,1.5)--(-1.1,1.9 );
\draw   (-1.7,1.5)--(0,3.6);
\draw   (-1.7,1.5)--(-3.6,0);
\filldraw [black] (-1.7,-1.5) circle (1.3pt);
\draw  (-1.45,-1.52)  node { $x_2$};
\draw  (-1.7,-1.5)--(-1.9,-1.1);\draw  (-1.7,-1.5)--(0 ,-0.5);
\draw   (-1.7,-1.5)--(-0.5, 0 );\draw   (-1.7,-1.5)--(-1.1,-1.9 );
\draw   (-1.7,-1.5)--(0,-3.6);
\draw   (-1.7,-1.5)--(-3.6,0);
\filldraw [black] (0,0) circle (1.3pt);
\draw  (0.18,0)  node { $17$};
\draw  (0,0.5)-- (0,0)--(0 ,-0.5);

\begin{scope}[xshift=8.3cm]
\filldraw [black] (-0.5,0) circle (1.3pt);\draw  (-0.5,0.18)  node { $8$};
\filldraw [black](-3.6,0) circle (1.3pt);\draw  (-3.58,0.18)  node { $7$};
\filldraw [black](-1.9,1.1) circle (1.3pt);\draw  (-1.9,1.28)  node { $1'$};
\filldraw [black](-1.9,0.7) circle (1.3pt);\draw  (-1.9,0.88)  node { $3'$};
\filldraw [black](-1.9,0.3) circle (1.3pt);\draw  (-1.9,0.48)  node { $9'$};
\filldraw [black] (-1.9,-0.1) circle (1.3pt);\draw  (-1.87,0.08)  node { $11'$};
\filldraw [black] (-1.9,-0.5) circle (1.3pt);\draw  (-1.87,-0.32)  node { $13'$};
\filldraw [black](-1.9,-0.9) circle (1.3pt);\draw  (-1.87,-0.72)  node { $15'$};
\filldraw [black] (0.5,0) circle (1.3pt);\draw  (0.51,0.2)  node { $5$};
\filldraw [black](3.6,0) circle (1.3pt);\draw  (3.6,0.2)  node { $6$};
 \filldraw [black](1.9,-0.9) circle (1.3pt);\draw  (2.05,-1.04)  node { $12'$};
 \filldraw [black] (1.9,-0.5) circle (1.3pt);\draw  (1.9,-0.65)  node { $16'$};
  \filldraw [black] (1.9,-0.1) circle (1.3pt);\draw  (1.9,-0.27)  node { $10'$};
 \filldraw [black](1.9,0.3) circle (1.3pt);\draw  (1.9,0.13)  node { $4'$};
 \filldraw [black] (1.9,0.7) circle (1.3pt);\draw  (1.9,0.53)  node { $2'$};
   \filldraw [black] (1.9,1.1) circle (1.3pt);\draw  (1.9,0.9)  node { $14'$};
\filldraw [black](0,0.5) circle (1.3pt);\draw  (-0.18,0.53)  node { $7'$};
\filldraw [black](0,3.6) circle (1.3pt);\draw  (0.18,3.65)  node { $5'$};
\filldraw [black] (-1,1.9) circle (1.3pt);\draw  (-1.03,1.74)  node { $1$};
\filldraw [black] (-0.65,1.9) circle (1.3pt);\draw  (-0.65,1.74)  node { $17'$};
\filldraw [black] (-0.3,1.9) circle (1.3pt);\draw  (-0.45,2.05)  node { $13$};
\filldraw [black] (0,1.9) circle (1.3pt);\draw  (0.13,1.9)  node { $2$};
 \filldraw [black] (0.35,1.9) circle (1.3pt);\draw  (0.49,1.9)  node { $9$};
\filldraw [black](0.7,1.9) circle (1.3pt);\draw  (0.87,1.9)  node { $14$};
\filldraw [black](1.1,1.9) circle (1.3pt);\draw  (1.13,1.69)  node { $10$};
\filldraw [black] (0,-0.5) circle (1.3pt);\draw  (-0.27,-0.5)  node { $6'$};
\filldraw [black] (0,-3.6) circle (1.3pt);\draw  (0.2, -3.6)  node { $8'$};
\filldraw [black](-0.9, -1.9) circle (1.3pt);\draw  (-1.05,-1.9)  node { $3$};
\filldraw [black](-0.5, -1.9) circle (1.3pt);\draw  (-0.64,-1.9)  node { $4$};
\filldraw [black](-0.1, -1.9) circle (1.3pt);\draw  (-0.27,-1.9)  node { $11$};
\filldraw [black](0.3,-1.9) circle (1.3pt);\draw  (0.11,-1.9)  node { $12$};
\filldraw [black](0.7, -1.9) circle (1.3pt);\draw  (0.51,-1.9)  node { $15$};
\filldraw [black](1.1, -1.9) circle (1.3pt);\draw  (0.91,-1.9)  node { $16$};
\draw (3.6,0)--(0,-3.6)--(-3.6,0)--(0,3.6)--(3.6,0);
\draw (0,-0.5)-- (0.5,0)--(0,0.5)--(-0.5,0)--(0,-0.5);
\draw  (-0.5,0)..controls+(-1.7,1.3)and+(-0.5,-0.6)..(0, 3.6);
\draw  (0,-0.5)..controls+(-1.7,-1.3)and+(0.5,-0.4)..(-3.6,0);
\draw (0, 3.6)--(1.1,1.9)--(0,0.5);\draw (0, 3.6)--(0.7,1.9)--(0,0.5);
\draw (0, 3.6)--(0.35,1.9)--(0,0.5);\draw (0, 3.6)--(0,1.9)--(0,0.5);
\draw (0, 3.6)--(-0.3,1.9)--(0,0.5);\draw (0, 3.6)--(-1,1.9)--(0,0.5);
\draw (-0.3,1.9)--(-1,1.9);
\draw (0, -3.6)--(-0.9,-1.9)--(0,-0.5);\draw (0, -3.6)--(-0.5,-1.9)--(0,-0.5);
\draw (0, -3.6)--(-0.1,-1.9)--(0,-0.5);\draw (0,- 3.6)--(0.3,-1.9)--(0,-0.5);
\draw (0, -3.6)--(0.7,-1.9)--(0,-0.5);\draw (0, -3.6)--(1.1,-1.9)--(0,-0.5);
\draw (3.6 ,0)--(1.9,-0.9)--(0.5,0);\draw (3.6 ,0 )--(1.9 ,-0.5)--(0.5, 0);
\draw (3.6 ,0)--(1.9,-0.1)--(0.5,0);\draw (3.6 ,0 )--(1.9 ,0.3)--(0.5, 0);
\draw (3.6 ,0)--(1.9,0.7)--(0.5,0);\draw (3.6 ,0 )--(1.9 ,1.1)--(0.5, 0);
\draw (-3.6 ,0 )--(-1.9 ,-0.9)--(-0.5, 0);\draw (-3.6 ,0 )--(-1.9 ,-0.5)--(-0.5, 0);
\draw (-3.6 ,0)--(-1.9,-0.1)--(-0.5,0);\draw (-3.6 ,0)--(-1.9,0.3)--(-0.5,0);
\draw (-3.6 ,0 )--(-1.9 ,0.7)--(-0.5, 0);\draw (-3.6 ,0 )--(-1.9 ,1.1)--(-0.5, 0);
\filldraw [black] (1.7,-1.5) circle (1.3pt);
\draw  (1.45,-1.5)  node { $x_1$};
\draw  (1.7,-1.5)--(1.9,-0.9);\draw  (1.7,-1.5)--(0 ,-0.5);
\draw   (1.7,-1.5)--(0.5, 0 );\draw   (1.7,-1.5)--(1.1,-1.9 );
\draw   (1.7,-1.5)--(0,-3.6);
\draw   (1.7,-1.5)--(3.6,0);
\filldraw [black] (1.7,1.5) circle (1.3pt);
\draw  (1.45,1.52)  node { $x_2$};
\draw  (1.7,1.5)--(1.9,1.1);\draw  (1.7,1.5)--(0 ,0.5);
\draw   (1.7,1.5)--(0.5, 0 );\draw   (1.7,1.5)--(1.1,1.9 );
\draw   (1.7,1.5)--(0,3.6);
\draw   (1.7,1.5)--(3.6,0);
\filldraw [black] (0,0) circle (1.3pt);
\draw  (-0.18,0)  node { $17$};
\draw  (0,-0.5)-- (0,0)--(0 ,0.5);
\end{scope}
\end{tikzpicture}
\vskip-0.05cm    (a)  The graph $\widehat{G}_1$\  \ \ \ \ \ \  \  \ \ \ \ \  \ \ \ \ \ \ \ \ \ \  \  \ \ \ \  \ \ \ \ \  \ \ \     (b)  The graph $\widehat{G}_2$
\end{center}
\end{figure}
\vskip-0.6cm

\begin{figure}[H]
\begin{center}
\begin{tikzpicture}
[scale=0.85]
\tikzstyle{every node}=[font=\tiny,scale=0.7]
\tiny
[inner sep=0pt]
\filldraw [black] (0.5,0) circle (1.3pt);\draw  (0.5,-0.2)  node { $9$};
\filldraw [black](3.6,0) circle (1.3pt);\draw  (3.63,-0.2)  node { $10$};
\filldraw [black](1.9,-1.1) circle (1.3pt);\draw  (1.9,-1.25)  node { $16'$};
\filldraw [black](1.9,-0.7) circle (1.3pt);\draw  (1.9,-0.85)  node { $14'$};
\filldraw [black](1.9,-0.3) circle (1.3pt);\draw  (1.9,-0.45)  node { $8'$};
\filldraw [black] (1.9,0.1) circle (1.3pt);\draw  (1.9,-0.05)  node { $6'$};
\filldraw [black] (1.9,0.5) circle (1.3pt);\draw  (1.9,0.35)  node { $4'$};
\filldraw [black](1.9,0.9) circle (1.3pt);\draw  (1.9,0.75)  node { $2'$};
\filldraw [black] (-0.5,0) circle (1.3pt);\draw  (-0.5,-0.24)  node { $12$};
\filldraw [black](-3.6,0) circle (1.3pt);\draw  (-3.6,-0.24)  node { $11$};
 \filldraw [black](-1.9,0.9) circle (1.3pt);\draw  (-2.02,1.02)  node { $13'$};
 \filldraw [black] (-1.9,0.5) circle (1.3pt);\draw  (-1.9,0.68)  node { $1'$};
  \filldraw [black] (-1.9,0.1) circle (1.3pt);\draw  (-1.9,0.28)  node { $5'$};
 \filldraw [black](-1.9,-0.3) circle (1.3pt);\draw  (-1.9,-0.12)  node { $7'$};
 \filldraw [black] (-1.9,-0.7) circle (1.3pt);\draw  (-1.87,-0.52)  node { $15'$};
   \filldraw [black] (-1.9,-1.1) circle (1.3pt);\draw  (-1.87,-0.92)  node { $3'$};
\filldraw [black](0,-0.5) circle (1.3pt);\draw  (0.25,-0.51)  node { $10'$};
\filldraw [black](0,-3.6) circle (1.3pt);\draw  (0.25,-3.6)  node { $12'$};
\filldraw [black] (1,-1.9) circle (1.3pt);\draw  (1.05,-1.70)  node { $16$};
\filldraw [black] (0.65,-1.9) circle (1.3pt);\draw  (0.61,-1.70)  node { $17'$};
\filldraw [black] (0.3,-1.9) circle (1.3pt);\draw  (0.18,-1.9)  node { $4$};
\filldraw [black] (-0.05,-1.9) circle (1.3pt);\draw  (-0.18,-1.9)  node { $8$};
 \filldraw [black] (-0.4,-1.9) circle (1.3pt);\draw  (-0.54,-1.9)  node { $7$};
\filldraw [black](-0.75,-1.9) circle (1.3pt);\draw  (-0.89,-1.9)  node { $3$};
\filldraw [black](-1.1,-1.9) circle (1.3pt);\draw  (-1.12,-1.7)  node { $15$};

\filldraw [black] (0,0.5) circle (1.3pt);\draw  (0.3,0.5)  node { $11'$};
\filldraw [black] (0,3.6) circle (1.3pt);\draw  (0.2, 3.6)  node { $9'$};
\filldraw [black](0.9, 1.9) circle (1.3pt);\draw  (1.09,1.9)  node { $14$};
\filldraw [black](0.5, 1.9) circle (1.3pt);\draw  (0.67,1.9)  node { $13$};
\filldraw [black](0.1, 1.9) circle (1.3pt);\draw  (0.23,1.9)  node { $6$};
\filldraw [black](-0.3,1.9) circle (1.3pt);\draw  (-0.15,1.9)  node { $5$};
\filldraw [black](-0.7, 1.9) circle (1.3pt);\draw  (-0.55,1.9)  node { $2$};
\filldraw [black](-1.1, 1.9) circle (1.3pt);\draw  (-0.95,1.9)  node { $1$};
\draw (3.6,0)--(0,-3.6)--(-3.6,0)--(0,3.6)--(3.6,0);
\draw (0,-0.5)-- (0.5,0)--(0,0.5)--(-0.5,0)--(0,-0.5);
\draw  (0.5,0)..controls+(1.7,-1.3)and+(0.5,0.6)..(0, -3.6);
\draw  (0,0.5)..controls+(1.7,1.3)and+(-0.5,0.5)..(3.6,0);
\draw (0, -3.6)--(-1.1,-1.9)--(0,-0.5);\draw (0, -3.6)--(-0.75,-1.9)--(0,-0.5);
\draw (0, -3.6)--(-0.4,-1.9)--(0,-0.5);\draw (0, -3.6)--(-0.05,-1.9)--(0,-0.5);
\draw (0, -3.6)--(0.3,-1.9)--(0,-0.5);\draw (0, -3.6)--(1,-1.9)--(0,-0.5);
\draw (0.3,-1.9)--(1,-1.9);
\draw (0, 3.6)--(0.9,1.9)--(0,0.5);\draw (0, 3.6)--(0.5,1.9)--(0,0.5);
\draw (0, 3.6)--(0.1,1.9)--(0,0.5);\draw (0, 3.6)--(-0.3,1.9)--(0,0.5);
\draw (0, 3.6)--(-0.7,1.9)--(0,0.5);\draw (0, 3.6)--(-1.1,1.9)--(0,0.5);
\draw (-3.6 ,0)--(-1.9,0.9)--(-0.5,0);\draw (-3.6 ,0 )--(-1.9 ,0.5)--(-0.5, 0);
\draw (-3.6 ,0)--(-1.9,0.1)--(-0.5,0);\draw (-3.6 ,0 )--(-1.9 ,-0.3)--(-0.5, 0);
\draw (-3.6 ,0)--(-1.9,-0.7)--(-0.5,0);\draw (-3.6 ,0 )--(-1.9 ,-1.1)--(-0.5, 0);
\draw (3.6 ,0 )--(1.9 ,0.9)--(0.5, 0);\draw (3.6 ,0 )--(1.9 ,0.5)--(0.5, 0);
\draw (3.6 ,0)--(1.9,0.1)--(0.5,0);\draw (3.6 ,0)--(1.9,-0.3)--(0.5,0);
\draw (3.6 ,0 )--(1.9 ,-0.7)--(0.5, 0);\draw (3.6 ,0 )--(1.9 ,-1.1)--(0.5, 0);
\filldraw [black] (-1.7,1.5) circle (1.3pt);
\draw  (-1.45,1.5)  node { $x_1$};
\draw  (-1.7,1.5)--(-1.9,0.9);\draw  (-1.7,1.5)--(0 ,0.5);
\draw   (-1.7,1.5)--(-0.5, 0 );\draw   (-1.7,1.5)--(-1.1,1.9 );
\draw   (-1.7,1.5)--(0,3.6);
\draw   (-1.7,1.5)--(-3.6,0);
\filldraw [black] (-1.7,-1.5) circle (1.3pt);
\draw  (-1.45,-1.52)  node { $x_2$};
\draw  (-1.7,-1.5)--(-1.9,-1.1);\draw  (-1.7,-1.5)--(0 ,-0.5);
\draw   (-1.7,-1.5)--(-0.5, 0 );\draw   (-1.7,-1.5)--(-1.1,-1.9 );
\draw   (-1.7,-1.5)--(0,-3.6);
\draw   (-1.7,-1.5)--(-3.6,0);
\filldraw [black] (0,0) circle (1.3pt);
\draw  (0.18,0)  node { $17$};
\draw  (0,0.5)-- (0,0)--(0 ,-0.5);
\begin{scope}[xshift=8.3cm]
\filldraw [black] (-0.5,0) circle (1.3pt);\draw  (-0.52,0.22)  node { $16$};
\filldraw [black](-3.6,0) circle (1.3pt);\draw  (-3.62,0.22)  node { $15$};
\filldraw [black](-1.9,1.1) circle (1.3pt);\draw  (-1.9,1.27)  node { $1'$};
\filldraw [black](-1.9,0.7) circle (1.3pt);\draw  (-1.9,0.87)  node { $3'$};
\filldraw [black](-1.9,0.3) circle (1.3pt);\draw  (-1.9,0.47)  node { $5'$};
\filldraw [black] (-1.9,-0.1) circle (1.3pt);\draw  (-1.9,0.07)  node { $7'$};
\filldraw [black] (-1.9,-0.5) circle (1.3pt);\draw  (-1.94,-0.32)  node { $9'$};
\filldraw [black](-1.9,-0.9) circle (1.3pt);\draw  (-1.87,-0.70)  node { $11'$};
\filldraw [black] (0.5,0) circle (1.3pt);\draw  (0.5,0.23)  node { $13$};
\filldraw [black](3.6,0) circle (1.3pt);\draw  (3.63,0.23)  node { $14$};
 \filldraw [black](1.9,-0.9) circle (1.3pt);\draw  (2,-1.02)  node { $4'$};
 \filldraw [black] (1.9,-0.5) circle (1.3pt);\draw  (1.9,-0.65)  node { $12'$};
  \filldraw [black] (1.9,-0.1) circle (1.3pt);\draw  (1.9,-0.27)  node { $10'$};
 \filldraw [black](1.9,0.3) circle (1.3pt);\draw  (1.9,0.12)  node { $8'$};
 \filldraw [black] (1.9,0.7) circle (1.3pt);\draw  (1.9,0.52)  node { $2'$};
   \filldraw [black] (1.9,1.1) circle (1.3pt);\draw  (1.9,0.92)  node { $6'$};
\filldraw [black](0,0.5) circle (1.3pt);\draw  (-0.26,0.53)  node { $15'$};
\filldraw [black](0,3.6) circle (1.3pt);\draw  (0.24,3.65)  node { $13'$};
\filldraw [black] (-1,1.9) circle (1.3pt);\draw  (-1.03,1.71)  node { $5$};
\filldraw [black] (-0.65,1.9) circle (1.3pt);\draw  (-0.60,1.72)  node { $17'$};
\filldraw [black] (-0.3,1.9) circle (1.3pt);\draw  (-0.15,1.9)  node { $9$};
\filldraw [black] (0.05,1.9) circle (1.3pt);\draw  (0.17,1.9)  node { $1$};
 \filldraw [black] (0.35,1.9) circle (1.3pt);\draw  (0.5,1.9)  node { $6$};
\filldraw [black](0.7,1.9) circle (1.3pt);\draw  (0.88,1.9)  node { $10$};
\filldraw [black](1.1,1.9) circle (1.3pt);\draw  (1.13,1.74)  node { $2$};
\filldraw [black] (0,-0.5) circle (1.3pt);\draw  (-0.29,-0.5)  node { $14'$};
\filldraw [black] (0,-3.6) circle (1.3pt);\draw  (0.24, -3.6)  node { $16'$};
\filldraw [black](-0.9, -1.9) circle (1.3pt);\draw  (-1.05,-1.9)  node { $3$};
\filldraw [black](-0.5, -1.9) circle (1.3pt);\draw  (-0.64,-1.9)  node { $4$};
\filldraw [black](-0.1, -1.9) circle (1.3pt);\draw  (-0.23,-1.9)  node { $7$};
\filldraw [black](0.3,-1.9) circle (1.3pt);\draw  (0.12,-1.9)  node { $11$};
\filldraw [black](0.7, -1.9) circle (1.3pt);\draw  (0.52,-1.9)  node { $12$};
\filldraw [black](1.1, -1.9) circle (1.3pt);\draw  (0.95,-1.9)  node { $8$};
\draw (3.6,0)--(0,-3.6)--(-3.6,0)--(0,3.6)--(3.6,0);
\draw (0,-0.5)-- (0.5,0)--(0,0.5)--(-0.5,0)--(0,-0.5);
\draw  (-0.5,0)..controls+(-1.7,1.3)and+(-0.5,-0.6)..(0, 3.6);
\draw  (0,-0.5)..controls+(-1.7,-1.3)and+(0.5,-0.4)..(-3.6,0);
\draw (0, 3.6)--(1.1,1.9)--(0,0.5);\draw (0, 3.6)--(0.7,1.9)--(0,0.5);
\draw (0, 3.6)--(0.35,1.9)--(0,0.5);\draw (0, 3.6)--(0.05,1.9)--(0,0.5);
\draw (0, 3.6)--(-0.3,1.9)--(0,0.5);\draw (0, 3.6)--(-1,1.9)--(0,0.5);
\draw (-0.3,1.9)--(-1,1.9);
\draw (0, -3.6)--(-0.9,-1.9)--(0,-0.5);\draw (0, -3.6)--(-0.5,-1.9)--(0,-0.5);
\draw (0, -3.6)--(-0.1,-1.9)--(0,-0.5);\draw (0,- 3.6)--(0.3,-1.9)--(0,-0.5);
\draw (0, -3.6)--(0.7,-1.9)--(0,-0.5);\draw (0, -3.6)--(1.1,-1.9)--(0,-0.5);
\draw (3.6 ,0)--(1.9,-0.9)--(0.5,0);\draw (3.6 ,0 )--(1.9 ,-0.5)--(0.5, 0);
\draw (3.6 ,0)--(1.9,-0.1)--(0.5,0);\draw (3.6 ,0 )--(1.9 ,0.3)--(0.5, 0);
\draw (3.6 ,0)--(1.9,0.7)--(0.5,0);\draw (3.6 ,0 )--(1.9 ,1.1)--(0.5, 0);
\draw (-3.6 ,0 )--(-1.9 ,-0.9)--(-0.5, 0);\draw (-3.6 ,0 )--(-1.9 ,-0.5)--(-0.5, 0);
\draw (-3.6 ,0)--(-1.9,-0.1)--(-0.5,0);\draw (-3.6 ,0)--(-1.9,0.3)--(-0.5,0);
\draw (-3.6 ,0 )--(-1.9 ,0.7)--(-0.5, 0);\draw (-3.6 ,0 )--(-1.9 ,1.1)--(-0.5, 0);
\filldraw [black] (1.7,-1.5) circle (1.3pt);
\draw  (1.45,-1.5)  node { $x_1$};
\draw  (1.7,-1.5)--(1.9,-0.9);\draw  (1.7,-1.5)--(0 ,-0.5);
\draw   (1.7,-1.5)--(0.5, 0 );\draw   (1.7,-1.5)--(1.1,-1.9 );
\draw   (1.7,-1.5)--(0,-3.6);
\draw   (1.7,-1.5)--(3.6,0);
\filldraw [black] (1.7,1.5) circle (1.3pt);
\draw  (1.45,1.52)  node { $x_2$};
\draw  (1.7,1.5)--(1.9,1.1);\draw  (1.7,1.5)--(0 ,0.5);
\draw   (1.7,1.5)--(0.5, 0 );\draw   (1.7,1.5)--(1.1,1.9 );
\draw   (1.7,1.5)--(0,3.6);
\draw   (1.7,1.5)--(3.6,0);
\filldraw [black] (0,0) circle (1.3pt);
\draw  (-0.18,0)  node { $17$};
\draw  (0,-0.5)-- (0,0)--(0 ,0.5);
\end{scope}
\end{tikzpicture}
\vskip-0.05cm    (c)  The graph $\widehat{G}_3$\  \ \ \ \ \ \  \  \ \ \ \ \  \ \ \ \ \ \ \ \ \ \  \  \ \ \ \  \ \ \ \ \  \ \ \     (d)  The graph $\widehat{G}_4$
\end{center}
\end{figure}

\vskip-0.6cm
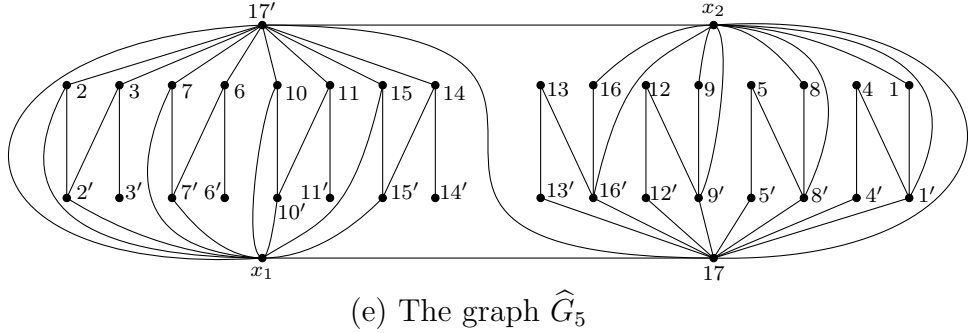
\begin{figure}[H]
\begin{center}
\begin{tikzpicture}
\tiny
[inner sep=0pt]
\filldraw [black] (3,1.8) circle (1.4pt)
                  (3,-1.3) circle (1.4pt)
                  (-3,1.8) circle (1.4pt)
                  (-3,-1.3) circle (1.4pt);
\draw (3,1.8)-- (-3,1.8) ;\draw (3,-1.3)-- (-3,-1.3) ;
\draw  (3,2)  node { $x_2$};\draw  (-3,2) node { $17'$};
\draw  (3,-1.5)  node { $17$};\draw  (-3,-1.5) node { $x_1$};
\draw[-] (-3,1.8)..controls+(-4.5,-0.2)and+(-4.5,-0.3)..(-3,-1.3);
\draw[-] (3,1.8)..controls+(4.5,0.3)and+(4.5,-0.1)..(3,-1.3);
\draw[-] (-3,1.8)..controls+(6,-0.02)and+(-6,0.02)..(3,-1.3);
\draw (-5.6,-0.5) parabola bend(-3,-1.3) (-1.4,-0.5);
\draw (-4.2,-0.5) parabola bend(-3,-1.3) (-2.8,-0.5);
\draw[-] (-3,-1.3)..controls+(-2,0.01)and+(-1,-1.8)..(-5.6,1);
\draw[-] (-3,-1.3)..controls+(-2,0.1)and+(-0.4,-0.8)..(-4.2,1);
\draw[-] (-3,-1.3)..controls+(-0.3,0.1)and+(-0.2,-0.2)..(-2.8,1);
\draw[-] (-3,-1.3)..controls+(0.8,0.5)and+(-0.4,-1.8)..(-1.4,1);
\draw (5.6,1) parabola bend(3,1.8) (1.4,1);
\draw (4.2,1) parabola bend(3,1.8) (2.8,1);
\draw[-] (3,1.8)..controls+(2,-0.01)and+(1,1.8)..(5.6,-0.5);
\draw[-] (3,1.8)..controls+(2,-0.1)and+(0.4,0.8)..(4.2,-0.5);
\draw[-] (3,1.8)..controls+(0.3,-0.1)and+(0.2,0.2)..(2.8,-0.5);
\draw[-] (3,1.8)..controls+(-0.8,-0.5)and+(0.4,1.8)..(1.4,-0.5);
\draw (-3,1.8)--  (-5.6,1);\draw (-3,1.8)--(-4.9,1) ;
\draw (-3,1.8)-- (-4.2,1)  ;\draw (-3,1.8)-- (-3.5,1) ;
\draw (-3,1.8)-- (-2.8,1)  ;\draw (-3,1.8)-- (-2.1,1) ;
\draw (-3,1.8)-- (-1.4,1)  ;\draw (-3,1.8)-- (-0.7,1) ;
\draw (3,-1.3)--  (4.9,-0.5);\draw (3,-1.3)--(4.2,-0.5)  ;
\draw (3,-1.3)-- (3.5,-0.5)  ;\draw (3,-1.3)-- (2.8,-0.5)  ;
\draw (3,-1.3)-- (2.1,-0.5)  ;\draw (3,-1.3)-- (1.4,-0.5)  ;
\draw (3,-1.3)-- (0.7,-0.5)  ;\draw (3,-1.3)-- (5.6,-0.5)  ;
\filldraw [black] (-0.7,1) circle (1.4pt)
                  (-0.7,-0.5) circle (1.4pt)
                  (-1.4,1) circle (1.4pt)
                  (-1.4,-0.5) circle (1.4pt);
\filldraw [black] (-2.1,1) circle (1.4pt)
                  (-2.1,-0.5) circle (1.4pt)
                  (-2.8,1) circle (1.4pt)
                  (-2.8,-0.5) circle (1.4pt);
\filldraw [black] (-3.5,1) circle (1.4pt)
                  (-3.5,-0.5) circle (1.4pt)
                  (-4.2,1) circle (1.4pt)
                  (-4.2,-0.5) circle (1.4pt);
\filldraw [black] (-4.9,1) circle (1.4pt)
                  (-4.9,-0.5) circle (1.4pt)
                  (-5.6,1) circle (1.4pt)
                  (-5.6,-0.5) circle (1.4pt);
\draw  (-5.6,1)--(-5.6,-0.5) ;\draw  (-4.9,1) --(-4.9,-0.5);
\draw (-5.6,-0.5)--(-4.9,1);
\draw  (-5.35,-0.40)  node { $2'$};\draw  (-5.4,0.92) node { $2$};
\draw (-4.7,-0.40)  node { $3'$};\draw   (-4.7,0.92)node { $3$};
\draw  (-4.2,1)--(-4.2,-0.5) ;\draw  (-3.5,1) --(-3.5,-0.5);
\draw (-4.2,-0.5)--(-3.5,1);
\draw  (-3.96,-0.40)  node { $7'$};\draw  (-4,0.92) node { $7$};
\draw (-3.65,-0.40)  node { $6'$};\draw   (-3.3,0.92)node { $6$};
\draw  (-2.8,1)--(-2.8,-0.5) ;\draw  (-2.1,1) --(-2.1,-0.5);
\draw (-2.8,-0.5)--(-2.1,1);
\draw  (-2.6,-0.67)  node { $10'$};\draw  (-2.55,0.90) node { $10$};
\draw (-2.3,-0.40)  node { $11'$};\draw   (-1.85,0.90)node { $11$};
\draw  (-1.4,1)--(-1.4,-0.5) ;\draw  (-0.7,1) --(-0.7,-0.5);
\draw (-1.4,-0.5)--(-0.7,1);
\draw  (-1.10,-0.40)  node { $15'$};\draw  (-1.15,0.90) node { $15$};
\draw (-0.45,-0.40)  node { $14'$};\draw   (-0.45,0.90)node { $14$};
\filldraw [black] (0.7,1) circle (1.4pt)
                  (0.7,-0.5) circle (1.4pt)
                  (1.4,1) circle (1.4pt)
                  (1.4,-0.5) circle (1.4pt);
\filldraw [black] (2.1,1) circle (1.4pt)
                  (2.1,-0.5) circle (1.4pt)
                  (2.8,1) circle (1.4pt)
                  (2.8,-0.5) circle (1.4pt);
\filldraw [black] (3.5,1) circle (1.4pt)
                  (3.5,-0.5) circle (1.4pt)
                  (4.2,1) circle (1.4pt)
                  (4.2,-0.5) circle (1.4pt);
\filldraw [black] (4.9,1) circle (1.4pt)
                  (4.9,-0.5) circle (1.4pt)
                  (5.6,1) circle (1.4pt)
                  (5.6,-0.5) circle (1.4pt);
\draw  (5.6,1)--(5.6,-0.5) ;\draw  (4.9,1) --(4.9,-0.5);
\draw (5.6,-0.5)--(4.9,1);
\draw  (5.4,0.95)  node { $1$};\draw  (5.84,-0.45) node { $1'$};
\draw (5.05,0.95)  node { $4$};\draw   (5.14,-0.45)node { $4'$};
\draw  (4.2,1)--(4.2,-0.5) ;\draw  (3.5,1) --(3.5,-0.5);
\draw (4.2,-0.5)--(3.5,1);
\draw  (4.35,0.95)  node { $8$};\draw  (4.42,-0.45) node { $8'$};
\draw (3.65,0.95)  node { $5$};\draw   (3.72,-0.45)node { $5'$};
\draw  (2.8,1)--(2.8,-0.5) ;\draw  (2.1,1) --(2.1,-0.5);
\draw (2.8,-0.5)--(2.1,1);
\draw  (2.95,0.95)  node { $9$};\draw  (3.04,-0.45) node { $9'$};
\draw (2.28,0.95)  node { $12$};\draw   (2.34,-0.42)node { $12'$};
\draw  (1.4,1)--(1.4,-0.5) ;\draw  (0.7,1) --(0.7,-0.5);
\draw (1.4,-0.5)--(0.7,1);
\draw  (1.63,0.95)  node { $16$};\draw  (1.67,-0.38) node { $16'$};
\draw (0.93,0.95)  node { $13$};\draw   (0.97,-0.38)node { $13'$};
\end{tikzpicture}
\vskip-0.05cm    (e)  The graph $\widehat{G}_5$
\caption{A planar decomposition of $K_{2,17,17}$}
\label{figure 13}
\end{center}
\end{figure}

{\bf Case 2.}~ When $p$ is odd and $p > 3$.

\noindent{\bf (a)}~ The construction for $\widehat{G}_r$ , $1\leq r\leq p$.

\noindent {\bf Step 1:}~~ When $r$ is odd, place the vertex $x_1$,$x_2$ and $u_{4p+1}$ in the face $1$,$2$ and $5$ of $G_{r}$ respectively. Delete edges $v_{4r-3}u_{4r}$ and $u_{4r-1}v_{4r-2}$ from $G_r$.

When $r$ is even, place the vertex $x_1$,$x_2$ and $u_{4p+1}$ in the face $3$,$4$ and $5$ of $G_{r}$, respectively. Delete edge $v_{4r}u_{4r-3}$ and $u_{4r-2}v_{4r-1}$ from $G_r$.

\noindent {\bf Step 2:}~~ Do parallel paths modifications, then join $x_1$, $x_2$, $u_{4p+1}$ and $v_{4p+1}$ to some $u_{j}$ and $v_{j}$, as shown in Table \ref{tab 7}.

\begin{table}[H]
\centering\caption {The edges we add to $G_{r} (1\leq r\leq p)$ in Case 2}\label{tab 7}\vskip-0.2cm\tiny
\renewcommand{\arraystretch}{1.1}
\begin{tabular}{|*{10}{c|}}
\hline
\multicolumn{2}{|c|}{\diagbox { \bahao edge}{\bahao subscript}{\bahao case} } &  \multicolumn{4}{|c|}{$r$ is odd}  & \multicolumn{4}{|c|}{$r$ is even} \\
\hline
\multicolumn{2}{|c|}{$x_1u_j$}    & \multicolumn{3}{|c|}{$4r-1,4r$}
 & \tabincell{c}{$4r+5,r\neq p~(U_1^r)$ \\  $1,r=p~(U_1^r)$}
 & \multicolumn{3}{|c|}{$4r-3,4r-2$}
 &  \tabincell{c}{ $4r+8,r\neq p-1~(U_2^r)$ \\ $8,r=p-1~(U_2^r)$}    \\
\hline
\multicolumn{2}{|c|}{$\Gape[8pt]x_1v_j$}    & \multicolumn{3}{|c|}{$4r-3,4r-1$} & $4r+1,r\neq p~(V_1^r)$ & \multicolumn{3}{|c|}{$4r-2,4r$}
& \tabincell{c}{ $4r+4~(V_2^r)$}\\
\hline
\multicolumn{2}{|c|}{$x_2u_j$}      & \multicolumn{3}{|c|}{$4r-1,4r$}
 & \tabincell{c}{$4r+3,r\neq p~(U_2^r)$ \\  $8,r=p~(U_2^r)$}
 & \multicolumn{3}{|c|}{$4r-3,4r-2$}
 &  \tabincell{c}{ $4r+2~(U_1^r)$}    \\
\hline
\multicolumn{2}{|c|}{$x_2v_j$}     & \multicolumn{3}{|c|}{$4r-2,4r$}
& \tabincell{c}{ $4r+7,r\neq p~(V_1^r)$\\ $3,r=p~(V_1^r)$}
 & \multicolumn{3}{|c|}{$4r-3,4r-1$}
& \tabincell{c}{ $4r+6,r\neq p-1~(V_2^r)$\\ $6,r=p-1~(V_2^r)$}\\
\hline
\multicolumn{2}{|c|}{$\Gape[8pt]u_{4p+1}v_j$}
& \multicolumn{8}{|c|}{$4r-2,4r-1$} \\
\cline{1-1}\cline{2-2}\cline{3-10}
\multicolumn{2}{|c|}{$\Gape[8pt]v_{4p+1}u_j$}
& \multicolumn{4}{|c|}{   \tabincell{c}{$4r+4,4r+8,r\neq p~(U_2^r)$\\$4,r=p~(U_2^r)$}}
&\multicolumn{4}{|c|}{ $4r-11,4r-7~(U_1^r)$}\\
\hline
\end{tabular}
\end{table}

\noindent{\bf (b)}~ The construction for $\widehat{G}_{p+1}$.

We list the edges that belong to $K_{2,4p+1,4p+1}$ but not to any $\widehat{G}_{r}$, $1\leq r\leq p$, as shown in Table \ref{tab 8}.
Then $\widehat{G}_{p+1}$ is the graph that consists of the edges in Table \ref{tab 8}, Figure \ref{figure 8} shows $\widehat{G}_{p+1}$ is a planar graph.

\vskip-0.2cm
\begin{table}[H]
\centering\caption {The edges of $\widehat{G}_{p+1}$ in Case 2
} \label{tab 8}\vskip-0.2cm
\tiny
\renewcommand{\arraystretch}{1.6}
\begin{tabular}{|*{2}{c|}}
\hline
\Gape[8pt]edges & subscript\\
\hline
$x_1u_j$ & {$j=2,4r+3,4r+6,4p+1.$ ($r=1,3,\dots,p-2.$)} \\
\hline
$x_1v_j$ & {$j=2,4,4r+3,4r+6,4p+1.$ ($r=1,3,\dots,p-2.$)} \\
\hline
$x_2u_j$ & {$j=1,2,9,4r,4r+1,4p+1.$ ($r=4,\dots,p-1.$)} \\
\hline
$x_2v_j$ &{$j=1,8,9,4r,4r+1,4p+1.$ ($r=4,\dots,p-1.$)} \\
\hline
$u_{4p+1}v_j$ &  $j=4r-3,4r$.($r=1,2,\dots,p.$) \\
\hline
$v_{4p+1}u_j$ &  $j=4r-2,4r-1,4p-7$.($r=1,2,\dots,p.$) \\
\hline
$v_{4r-3}u_{4r},v_{4r-2}u_{4r-1}$ &  $r=1,3,\dots,p.$ \\
\hline
$u_{4r-3}v_{4r},u_{4r-2}v_{4r-1}$ &  $r=2,4,\dots,p-1.$  \\
\hline
$u_{j}v_{j}$ &  $j=1,\dots,4p+1.$  \\
\hline
\end{tabular}
\end{table}

\vskip-1cm
\begin{figure}[H]
\begin{center}
\begin{tikzpicture}
[yscale=1]
\tikzstyle{every node}=[font=\tiny,scale=1]
\tiny
[inner sep=0pt]
\filldraw [black] (1.6,1) circle (1.4pt)
                  (1.6,-0.5) circle (1.4pt)
                  (2.44,1) circle (1.4pt)
                  (2.4,-0.5) circle (1.4pt)
                   (0.3,1) circle (1.4pt)
                  (0.3,-0.5) circle (1.4pt)
                  (-0.4,1) circle (1.4pt)
                  (-0.4,-0.5) circle (1.4pt);
\draw [white] (-0.43,-0.67) node [left]{}-- (-0.42,-0.65)node [black,pos=0.02,above,sloped]{ $v_{4p-2}$}-- (-0.42,-0.65) node [right]{};
\draw [white] (-0.9,0.87) node [left]{}-- (-0.83,0.83)node [black,pos=0.02,above,sloped]{ $ u_{4p-2}$}-- (-0.83,0.83) node [right]{};
\draw [white] (-0.2,0.86) node [left]{}-- (-0.13,0.83)node [black,pos=0.02,above,sloped]{ $u_{4p-1}$}-- (-0.13,0.83) node [right]{};
\draw [white] (0.22,-0.67) node [left]{}-- (0.23,-0.65)node [black,pos=0.02,above,sloped]{ $v_{4p-1}$}-- (0.23,-0.65) node [right]{};
\draw [white] (1.55,-0.9) node [left]{}-- (1.58,-0.91)node [black,pos=0.02,above,sloped]{ $v_{4p-7}$}-- (1.58,-0.91) node [right]{};
\draw [white] (1.15,0.8) node [left]{}-- (1.17,0.79)node [black,pos=0.02,above,sloped]{ $ u_{4p-7}$}-- (1.17,0.79) node [right]{};
\draw [white]  (1.94,0.8) node [left]{}--  (1.96,0.79)node [black,pos=0.02,above,sloped]{ $u_{4p-4}$}--  (1.96,0.79) node [right]{};
\draw [white](2.25,-0.86) node [left]{}-- (2.28,-0.87)node [black,pos=0.02,above,sloped]{ $v_{4p-4}$}--(2.28,-0.87) node [right]{};
\draw  (1.6,1)-- (1.6,-0.5) ;\draw  (2.44,1)-- (2.4,-0.5) ;\draw  (0.3,1)-- (0.3,-0.5) ;\draw   (-0.4,1)-- (-0.4,-0.5) ;
\draw  (1.6,1)--  (2.4,-0.5) ;\draw  (0.3,1)--  (-0.4,-0.5);
\filldraw [black] (4.6,1.8) circle (1.4pt)
                  (4.6,-1.3) circle (1.4pt)
                  (-2.6,1.8) circle (1.4pt)
                  (-2.6,-1.3) circle (1.4pt);
\draw (4.6,1.8)-- (-2.6,1.8) ;\draw (4.6,-1.3)-- (-2.6,-1.3) ;
\draw[-] (4.6,1.8)..controls+(0.7,-0.8)and+(0.7,0.8).. (4.6,-1.3);
\draw[-] (-2.6,1.8)..controls+(-0.7,-0.8)and+(-0.7,0.8).. (-2.6,-1.3);
\draw[-] (-2.6,1.8)..controls+(7.5,-0.25)and+(-7.5,0.25).. (4.6,-1.3);
\draw  (4.6,2)  node { $x_2$};\draw  (-2.3,1.92) node { $v_{4p+1}$};
\draw  (4.6,-1.45)  node { $u_{4p+1}$};\draw  (-2.6,-1.5) node { $x_1$};
\draw(-4.4,-0.5) parabola bend(-2.6,-1.3) (-2,-0.5) ;
\draw(-2.6,-1.3)..controls+(-2,0.1)and+(-1,-1.8)..(-4.4,1);
\draw (-2.6,-1.3)..controls+(-0.08,0.1)and+(-0.6,-1.5)..(-2,1);
\draw (-2.6,-1.3)..controls+(1,0.1)and+(0,-0.8).. (-0.4,-0.5);
 \draw (-2.6,-1.3)..controls+(1.9,0.3)and+(-0.4,-2).. (-0.4,1) ;
 \draw (-2.6,-1.3)..controls+(0.6,-0.1)and+(0.3,-2.1)..(6.4,-0.5);
\draw[-] (4.6,1.8)..controls+(-0.3,-0.3)and+(0.2,0.3).. (4,1);
\draw[-] (4.6,1.8)..controls+(0.6,-0.1)and+(-0.2,0.5)..(5.4,1);
\draw[-] (4.6,1.8)..controls+(-0.4,-0.15)and+(-0.4,2)..(4,-0.5);
\draw[-] (4.6,1.8)..controls+(0.4,-0.1)and+(-0.3,1.8)..(5.4,-0.5);
\draw[-] (4.6,1.8)..controls+(-0.7,-0.1)and+(0.4,0.7)..(2.44,1) ;
\draw[-] (4.6,1.8)..controls+(-1.9,-0.5)and+(0.4,2).. (2.4,-0.5);
\draw[-] (4.6,1.8)..controls+(-1.6,0.1)and+(0.4,2)..(-4.4,1);
\draw (-2.6,1.8)..controls+(-1,-0.1)and+(0.5,0.5).. (-4.4,1);
\draw (-2.6,1.8)..controls+(-0.5,-0.2)and+(0.05,0.05).. (-3.4,1);
\draw (-2.6,1.8)..controls+(0.5,-0.3)and+(-0.05,0.05).. (-2,1);
\draw (-2.6,1.8)..controls+(0.6,-0.2)and+(-0.3,0.4).. (-1,1);
\draw (-2.6,1.8)..controls+(0.9,-0.2)and+(-0.3,0.4).. (-0.4,1);
\draw (-2.6,1.8)..controls+(1.2,-0.15)and+(-0.3,0.4).. (0.3,1);
\draw (-2.6,1.8)..controls+(3,-0.05)and+(-0.1,0.7).. (1.6,1);
\draw (4.6,-1.3)..controls+(0.3,0.1)and+(-0.1,-0.2).. (5.4,-0.5);
\draw (4.6,-1.3)..controls+(-0.3,0.1)and+(0.05,-0.2).. (4,-0.5);
\draw (4.6,-1.3)..controls+(-1,0.3)and+(0.2,-0.4).. (3,-0.5);
\draw (4.6,-1.3)..controls+(0.6,0.1)and+(-0.4,-0.6).. (6.4,-0.5);
\draw (4.6,-1.3)..controls+(-2,0.2)and+(-0.1,-0.01)..  (1.6,-0.5);
\draw (4.6,-1.3)..controls+(-1.5,0.3)and+(0.6,-0.4).. (2.44,-0.5);
\filldraw [black] (-1,1) circle (1.4pt)
                  (-1,-0.5) circle (1.4pt)
                  (-2,1) circle (1.4pt)
                  (-2,-0.5) circle (1.4pt);
\filldraw [black] (-3.4,1) circle (1.4pt)
                  (-3.4,-0.5) circle (1.4pt)
                  (-4.4,1) circle (1.4pt)
                  (-4.4,-0.5) circle (1.4pt);
\draw  (-3.4,1)--(-3.4,-0.5) ;\draw  (-4.4,1) --(-4.4,-0.5);
\draw (-4.4,-0.5)--(-3.4,1);
\draw  (-4.6,-0.5)  node { $v_{2}$};\draw  (-4.6,1) node { $u_{2}$};
\draw (-3.6,-0.5)  node { $v_{3}$};\draw   (-3.6,1)node { $u_{3}$};
\draw  (-1,1)--(-1,-0.5) ;\draw  (-2,1) --(-2,-0.5);
\draw (-2,-0.5)--(-1,1);
\draw  (-2.45,1) node { $u_{4p-5}$};
\draw [white]  (-2.11,-0.76) node [left]{}-- (-2.05,-0.66)node [black,pos=0.02,above,sloped]{ $v_{4p-5}$}--  (-2.05,-0.66) node [right]{};
\draw (-1.45,-0.45)  node { $v_{4p-6}$};\draw   (-1.45,0.95)node { $u_{4p-6}$};
\filldraw [black] (3,1) circle (1.4pt)
                  (3,-0.5) circle (1.4pt)
                  (4,1) circle (1.4pt)
                  (4,-0.5) circle (1.4pt);
\filldraw [black] (5.4,1) circle (1.4pt)
                  (5.4,-0.5) circle (1.4pt)
                  (6.4,1) circle (1.4pt)
                  (6.4,-0.5) circle (1.4pt);
\draw  (5.4,1)--(5.4,-0.5) ;\draw  (6.4,1) --(6.4,-0.5);
\draw (6.4,1)--(5.4,-0.5);
\draw  (6.6,-0.5)  node { $v_{4}$};\draw  (6.64,1) node { $u_{4}$};
\draw (5.6,-0.5)  node { $v_{1}$};\draw   (5.6,1)node { $u_{1}$};
\draw  (3,1)--(3,-0.5) ;\draw  (4,1) --(4,-0.5);
\draw (4,-0.5)-- (3,1);
\draw  (4.44,-0.5)  node { $v_{4p-3}$};\draw  (4.47,0.9) node { $u_{4p-3}$};
\draw (3.31,-0.5)  node { $v_{4p}$};\draw   (3.31,0.9)node { $u_{4p}$};
\filldraw [black] (4.5,0.25) circle (1pt)
                  (4.7,0.25) circle (1pt)
                  (4.9,0.25) circle (1pt);
\filldraw [black] (-2.5,0.25) circle (1pt)
                  (-2.7,0.25) circle (1pt)
                  (-2.9,0.25) circle (1pt);
\end{tikzpicture}
\vskip-0.7cm
\caption{The graph $\widehat{G}_{p+1}$ in Case 2}
\label{figure 8}
\end{center}
\end{figure}
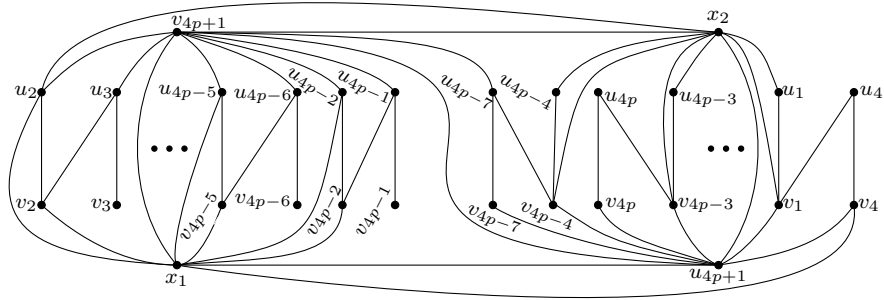
Therefore, $\{\widehat{G}_1,\dots,\widehat{G}_{p+1}\}$ is a planar decomposition of $K_{2,4p+1,4p+1}$ in this case.

{\bf Case 3.}~ When $p\leq 3$.

When $p=0$, $K_{2,1,1}$ is a planar graph. When $p=1,2,3$, we give a planar decomposition for $K_{2,5,5}$, $K_{2,9,9}$ and $K_{2,13,13}$ with $2$, $3$ and $4$ subgraphs respectively, as shown in Figure \ref{figure 9}, Figure \ref{figure 10} and Figure \ref{figure 11}.

\vskip-0.7cm
\begin{figure}[htp]
\begin{center}
\begin{tikzpicture}
[scale=1.3]
\tikzstyle{every node}=[font=\tiny,scale=1]
\tiny
[inner sep=0pt]
\filldraw [black] (0,0) circle (1pt)
                  (-0.5,0) circle (1pt)
                  (-1,0) circle (1pt)
                  (-1.5,0) circle (1pt)
                  (-2,0) circle (1pt)
                  (-2.5,0) circle (1pt)
                   (0.5,0) circle (1pt)
                  (1,0) circle (1pt)
                  (1.5,0) circle (1pt)
                  (2,0) circle (1pt)
                  (0,1.3) circle (1pt)
                  (-1,-0.5) circle (1pt);
 \draw (0,1.43) node {\tiny $x_2$};\draw (-0.78,-0.5) node {\tiny $x_1$};
 \draw (-2.5,-0.17) node {\tiny $v_1$};\draw (-2,-0.12) node {\tiny $u_1$};
 \draw (-1.5,-0.12) node {\tiny $v_5$};\draw (-0.85,-0.10) node {\tiny $u_2$};
\draw (-0.6,0.11) node {\tiny $v_2$};\draw (0,-0.12) node {\tiny $u_5$};
\draw (0.5,-0.12) node {\tiny $v_4$};\draw (1.1,-0.12) node {\tiny $u_4$};
\draw (1.55,-0.12) node {\tiny $v_3$};\draw (2.11,-0.12) node {\tiny $u_3$};
 \draw (0,1.3)--(0,0); \draw (0,1.3)-- (-0.5,0); \draw (0,1.3)--(-1,0);
 \draw (0,1.3)--(-1.5,0); \draw (0,1.3)--(-2,0); \draw (0,1.3)--(-2.5,0);
\draw (0,1.3)-- (0.5,0); \draw (0,1.3)--(1,0);
\draw (0,1.3)--(1.5,0); \draw (0,1.3)--(2,0);
\draw (-2.5,0)-- (2,0);\draw (-1,0)-- (-1,-0.5);
\draw  (-2.5,0)..controls+(0.5,-0.4)and+(-0.5,-0.4)..(-1,0);
\draw  (-2.5,0)..controls+(0.5,-0.9)and+(-0.5,-0.9)..(2,0);
\draw  (-0.5,0)..controls+(0.5,-0.4)and+(-0.5,-0.4)..(1,0);
\draw  (-0.5,0)..controls+(0.5,-0.6)and+(-0.5,-0.6)..(2,0);
\begin{scope}[xshift=5cm]
\filldraw [black] (0,0) circle (1pt)
                  (1.2,0) circle (1pt)
                (-1.2,0) circle (1pt)
                   (0,1.2) circle (1pt)
                  (0,-1.2) circle (1pt)
                  (1.2,1.2) circle (1pt)
                  (1.2,-1.2) circle (1pt)
                   (-1.2,1.2) circle (1pt)
                   (-1.2,-1.2) circle (1pt)
                    (0.8,-0.4) circle (1pt)
                     (-0.8,0.4) circle (1pt)
                    (1.6,0) circle (1pt);
\draw  (-1.2,-1.2)..controls+(-0.5,0.4)and+(-2.2,1.2)..(0,1.2);
\draw  (-1.2,1.2)--(0,1.2)-- (1.2,1.2)--(1.2,0)--(1.2,-1.2)-- (0,-1.2)--(-1.2,-1.2)--(-1.2,0)-- (-1.2,1.2) ;
\draw  (-1.3,1.05) node { $v_1$};
\draw   (0,1.31)node { $u_5$};
\draw   (1.2,1.31)node { $v_3$};
\draw  (1.09,0.11)node { $u_1$};
\draw   (1.3,-1.32)node { $v_4$};
\draw  (0,-1.32) node { $u_3$};
\draw  (-1.3,-1.3)node { $v_5$};
\draw   (-1.38,0)node { $u_4$};
\draw  (0.83,-0.52)node { $v_2$};
\draw  (-0.83,0.55)node { $x_2$};
\draw  (0.36,0.1)node { $x_1$};
\draw  (1.79,0)node { $u_2$};
\draw  (0,0)--(0,1.2);\draw  (0,0)--(-1.2,1.2);\draw  (0,0)--(1.2,1.2);
\draw  (0,0)--(1.2,0);\draw  (0,0)--(1.2,-1.2);\draw  (0,0)--(0,-1.2);
\draw  (0,0)--(-1.2,-1.2);\draw  (0,0)--(-1.2,0);\draw  (0,0)--(-1.2,1.2);
\draw  (0,0)--(0.8,-0.4)--(1.2,0);
\draw  (1.2,1.2)--(1.6,0)--(1.2,-1.2);
\end{scope}
\end{tikzpicture}
\caption{ A planar decomposition $K_{2,5,5}$}
\label{figure 9}
\end{center}
\end{figure}
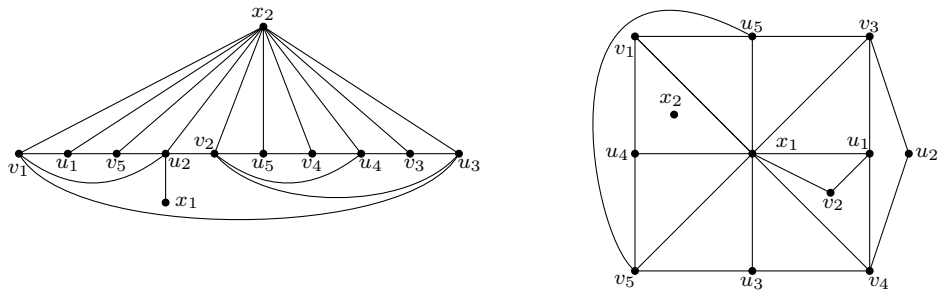

\begin{figure}[H]
\begin{center}
\begin{tikzpicture}
[scale=1.3]
\tikzstyle{every node}=[font=\tiny,scale=1]
\tiny
[inner sep=0pt]
\filldraw [black] (0.5,0) circle (1pt)
                  (1.9,0) circle (1pt)
                (1.2,-0.16) circle (1pt)
                   (1.2,0.06) circle (1pt)
                   (-0.5,0) circle (1pt)
                  (-1.2,0.16) circle (1pt)
                  (-1.9,0) circle (1pt)
                  (-1.2,-0.16) circle (1pt)
                   (-1.2,-0.5) circle (1pt)
                   (0,0.5) circle (1pt)
                  (0.06,1.2) circle (1pt)
                   (0,1.9) circle (1pt)
                    (-0.16, 1.2) circle (1pt)
                     (0.5, 1.2) circle (1pt)
                  (0,-0.5) circle (1pt)
                  (0.16,-1.2) circle (1pt)
                   (0,-1.9) circle (1pt)
                    (-0.26,-1.2) circle (1pt);
\draw  (0,-0.5)--(-0.26,-1.2)-- (0,-1.9) ;\draw  (0,-0.5)--(0.16,-1.2)-- (0,-1.9) ;
\draw(0,0.5)--(-0.16,1.2)-- (0,1.9) ;\draw(0,0.5)--(0.06,1.2)-- (0,1.9) ;
\draw(0,0.5)--(0.5,1.2)-- (0,1.9) ;
\draw  (-0.5,0)-- (-1.2,0.16)--  (-1.9,0) ;\draw  (-0.5,0)-- (-1.2,-0.16)--  (-1.9,0) ;\draw  (-0.5,0)-- (-1.2,-0.5)--  (-1.9,0) ;
\draw   (0.5,0)-- (1.2,0.06)--  (1.9,0);\draw   (0.5,0)-- (1.2,-0.16)--  (1.9,0);
\draw  (0,-1.9) --  (-1.9,0);\draw (0,-1.9)-- (1.9,0)-- (0,1.9);
\draw  (0,-0.5)--(0.5,0);\draw(-0.5,0)--(0,0.5);\draw(0,0.5)-- (0.5,0);
\draw[-]  (0,-1.9)..controls+(0.5,0.5)and+(0.2,-0.5)..  (0.5,0);
\draw[-]  (1.9,0)..controls+(-0.4,0.5)and+(0.5,0.1)..  (0,0.5);
\draw  (0.07,0.29) node { $v_3$};
\draw   (0.61,1.08)node { $u_6$};
\draw   (0.22,1.2)node { $u_5$};
\draw   (0.16,1.9)node { $v_1$};
\draw   (-0.28,1.3)node { $u_9$};
\draw  (0.15,-0.58) node { $v_2$};
\draw   (0.27,-1.1)node { $u_8$};
\draw   (-0.1,-1.2)node { $u_7$};
\draw   (0.16,-1.92)node { $v_4$};
\draw   (-0.45,0.24) node { $u_4$};
\draw  (-1.31,0.24) node { $v_7$};
\draw  (-1.2,-0.04) node { $v_5$};
\draw   (-1.95,-0.15)node { $u_3$};
\draw  (-1.2,-0.38) node { $v_9$};
\draw   (0.58,0.11) node { $u_1$};
\draw  (1.2,0.18) node { $v_6$};
\draw  (1.2,-0.30) node { $v_8$};
\draw   (2,0.11)node { $u_2$};
\filldraw [black] (-1,1) circle (1pt);
\draw  (-1.15,1.11) node { $x_1$};
\draw   (-1,1)  --  (-0.5,0);\draw   (-1,1)  --  (0,0.5);
\draw   (-1,1)  --  (-1.2,0.16);\draw   (-1,1)  --  (-0.16,1.2);
\draw   (-1,1)  --  (-1.9,0);\draw   (-1,1)  --  (0,1.9);
\draw[-]   (-1,1)..controls+(-0.5,-0.1)and+(0.1,0.3)..  (-2.2,0);
\draw[-]   (0,-1.9)..controls+(-0.5,0.1)and+(-0.01, -0.8)..  (-2.2,0);
\draw[-]   (-1,1)..controls+(0.1,0.1)and+(-0.5,-0.1)..  (0,2.2);
\draw[-]   (1.9,0)..controls+(-0.1,0.1)and+(0.8,-0.01)..  (0,2.2);
\filldraw [black] (-0.6,-0.6) circle (1pt);
\draw   (-0.72,-0.48) node { $x_2$};
\draw   (-0.6,-0.6)  --  (-0.5,0);\draw   (-0.6,-0.6)  --  (0,-0.5);
\draw   (-0.6,-0.6)  --  (0.5,0);\draw   (-0.6,-0.6)  --  (0,0.5);
\draw    (-0.6,-0.6)  --  (-1.2,-0.5);\draw    (-0.6,-0.6)  --  (-0.26,-1.2);
\draw[-]    (-0.6,-0.6)..controls+(0.1,-0.4)and+(-0.2,0.4)..  (0,-1.9);
\draw[-]   (-0.6,-0.6)..controls+(-0.1,-0.1)and+(1.2,-1.1)..  (-1.9,0);
\begin{scope}[xshift=5cm]
\filldraw [black] (0.5,0) circle (1pt)
                  (1.9,0) circle (1pt)
                  (1.2,-0.56) circle (1pt)
                  (1.2,0.26) circle (1pt)
                  (1.2,-0.15) circle (1pt)
                   (-0.5,0) circle (1pt)
                  (-1.2,0.2) circle (1pt)
                  (-1.9,0) circle (1pt)
                  (-1.2,-0.26) circle (1pt)
                   (0,0.5) circle (1pt)
                  (0.16,1.2) circle (1pt)
                   (0,1.9) circle (1pt)
                    (-0.22, 1.2) circle (1pt)
                  (0,-0.5) circle (1pt)
                  (0.41,-1.2) circle (1pt)
                   (0,-1.9) circle (1pt)
                    (-0.46,-1.2) circle (1pt)
                     (0,-1.2) circle (1pt);
\draw  (0,-0.5)--(-0.46,-1.2)-- (0,-1.9) ;\draw  (0,-0.5)--(0.41,-1.2)-- (0,-1.9) ; \draw  (0,-0.5)--(0,-1.2)-- (0,-1.9) ;
\draw(0,0.5)--(-0.22,1.2)-- (0,1.9) ;\draw(0,0.5)--(0.16,1.2)-- (0,1.9) ;
\draw  (-0.5,0)-- (-1.2,0.2)--  (-1.9,0) ;\draw  (-0.5,0)-- (-1.2,-0.26)--  (-1.9,0) ;
\draw   (0.5,0)-- (1.2,0.26)--  (1.9,0);\draw   (0.5,0)-- (1.2,-0.56)--  (1.9,0);\draw   (0.5,0)-- (1.2,-0.15)--  (1.9,0);
\draw  (0,1.9) --  (-1.9,0)-- (0,-1.9);\draw(0,1.9) -- (1.9,0);
\draw  (0.5,0)--(0,-0.5) --  (-0.5,0)--(0,0.5);
\draw[-]  (-1.9,0)..controls+(1.2,-1.2)and+(-0.5,-0.2)..  (0,-0.5);
\draw[-]  (0,1.9)..controls+(-1.2, -1.2)and+(-0.2,0.5)..  (-0.5 ,0);
\draw  (0.17,0.62) node { $v_7$};
\draw   (0,1.18)node { $u_2$};
\draw   (-0.36,1.2)node { $u_1$};
\draw   (0.18,1.95)node { $v_5$};
\draw  (-0.21,-0.47) node { $v_6$};
\draw   (0.57,-1.2)node { $u_9$};
\draw   (-0.58,-1.1)node { $u_3$};
\draw   (0.18,-1.92)node { $v_8$};
\draw   (-0.16,-1.2)node { $u_4$};
\draw   (-0.47,0.23) node { $u_8$};
\draw  (-1.1,0.30) node { $v_1$};
\draw  (-1.05,-0.35) node { $v_3$};
\draw   (-2,0.15)node { $u_7$};
\draw   (0.48,-0.24) node { $u_5$};
\draw  (1.22,0.37) node { $v_9$};
\draw  (1.2,-0.03) node { $v_4$};
\draw  (1.2,-0.4) node { $v_2$};
\draw   (2,0.12)node { $u_6$};
\filldraw [black] (1,-1) circle (1pt);
\draw  (1.10,-1.15) node { $x_1$};
\draw   (1,-1)  --  (0.5,0);
\draw   (1,-1)  --  (1.9,0);\draw   (1,-1)  --  (0,-1.9);
\draw   (1,-1)  --  (1.2,-0.56);\draw   (1,-1)  --  (0,-0.5);
\draw[-]   (1,-1)..controls+(0.5,0.1)and+(-0.1,-0.4)..  (2.3,0);
\draw[-]   (0,1.9)..controls+(0.5,-0.1)and+(0.01, 0.8)..  (2.3,0);
\draw[-]   (1,-1)..controls+(-0.1,-0.1)and+(0.5,0.1)..  (0,-2.2);
\draw[-]   (-1.9,0)..controls+(0.1,-0.1)and+(-0.8,0.01)..  (0,-2.2);
\filldraw [black] (0.6,0.6) circle (1pt);
\draw  (0.72,0.75) node { $x_2$};
\draw   (0.6,0.6)  --  (-0.5,0);\draw   (0.6,0.6)  --  (0,-0.5);
\draw   (0.6,0.6)  --  (0.5,0);\draw   (0.6,0.6)  --  (0,0.5);
\draw   (0.6,0.6)  --  (0.16,1.2);
\draw[-]    (0.6,0.6)..controls+(-0.1,0.4)and+(0.3,-0.5)..  (0,1.9);
\draw[-]    (0.6,0.6)..controls+(0.3,0.2)and+(-0.5,0.4).. (1.9,0);
\end{scope}
\end{tikzpicture}
\end{center}
\end{figure}
\vskip-1cm

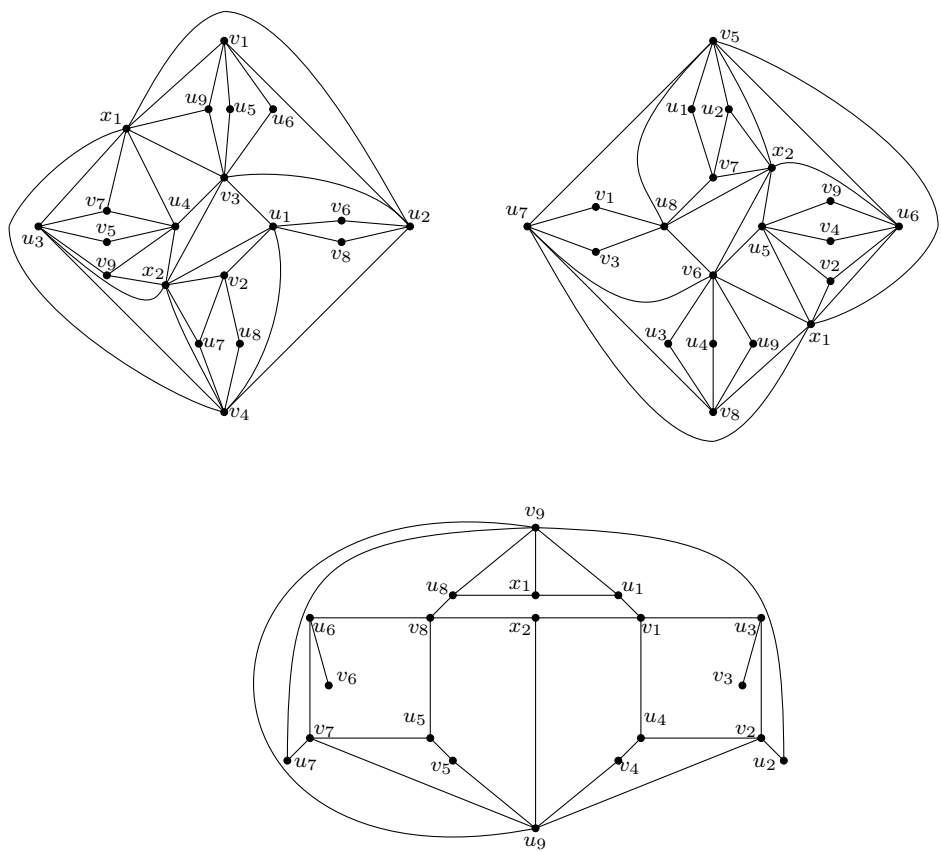
\begin{figure}[H]
\begin{center}
\begin{tikzpicture}
[inner sep=0pt]
\filldraw [black]     (-1,2) circle (1.3pt);
\draw (-1,2.2) node {\tiny $v_{9}$};
\filldraw [black]     (-1,-2) circle (1.3pt);
\draw (-1,-2.2) node {\tiny $u_{9}$};
\draw[-] (-1,2)..controls+(-5,0.8)and+(-5,-1).. (-1,-2);
\draw[-](-1,2)--(-1,1.1);\draw[-](-1,2)--(-2.1,1.1);\draw[-](-1,2)--(0.1,1.1);
\draw[-](2.3,-1.1)..controls+(-0.01,2.5)and+(3,-0.1)..(-1,2);
\draw[-](-4.3,-1.1)..controls+(0.01,2.5)and+(-3,-0.1)..(-1,2);
\draw[-](-1,-2)--(-1,0.8);\draw[-](-1,-2)--(-2.1,-1.1);\draw[-](-1,-2)--(0.1,-1.1);
\draw[-](-1,-2)--(-4,-0.8);\draw[-](-1,-2)--(2,-0.8);
\filldraw [black]     (-1,1.1) circle (1.3pt);
\draw (-1.2,1.23) node {\tiny $x_1$};
\filldraw [black]     (-1,0.8) circle (1.3pt);
\draw (-1.2,0.65) node {\tiny $x_2$};
\draw[-](0.1,1.1)--(-2.1,1.1);\draw[-](0.4,0.8)--(-2.4,0.8);
\filldraw [black]     (-4,0.8) circle (1.3pt);
\draw (-3.8,0.65) node {\tiny $u_{6}$};
\filldraw [black]     (-4,-0.8) circle (1.3pt);
\draw (-3.8,-0.7) node {\tiny $v_{7}$};
\filldraw [black]     (-2.4,0.8) circle (1.3pt);
\draw (-2.55,0.645) node {\tiny $v_{8}$};
\filldraw [black]     (-2.4,-0.8) circle (1.3pt);
\draw (-2.6,-0.55) node {\tiny $u_{5}$};
\draw[-](-4,0.8)--(-4,-0.8)--(-2.4,-0.8)--(-2.4,0.8)--(-4,0.8);
\filldraw [black]     (-3.75,-0.1) circle (1.3pt);\draw[-](-3.75,-0.1)--(-4,0.8);
\draw (-3.5,0.0) node {\tiny $v_{6}$};
\filldraw [black]     (-2.1,1.1) circle (1.3pt);\draw[-](-2.1,1.1)--(-2.4,0.8);
\draw (-2.3,1.2) node {\tiny $u_{8}$};
\filldraw [black]     (-4.3,-1.1) circle (1.3pt);\draw[-](-4,-0.8)--(-4.3,-1.1);
\draw (-4.05,-1.15) node {\tiny $u_{7}$};
\filldraw [black]     (-2.1,-1.1) circle (1.3pt);\draw[-](-2.1,-1.1) --(-2.4,-0.8);
\draw (-2.25,-1.2) node {\tiny $v_{5}$};
\filldraw [black]     (2,0.8) circle (1.3pt);
\draw (1.8,0.65) node {\tiny $u_{3}$};
\filldraw [black]     (2,-0.8) circle (1.3pt);
\draw (1.8,-0.7) node {\tiny $v_{2}$};
\filldraw [black]     (0.4,0.8) circle (1.3pt);
\draw (0.55,0.645) node {\tiny $v_{1}$};
\filldraw [black]     (0.4,-0.8) circle (1.3pt);
\draw (0.6,-0.55) node {\tiny $u_{4}$};
\draw[-](2,0.8)--(2,-0.8)--(0.4,-0.8)--(0.4,0.8)--(2,0.8);
\filldraw [black]     (1.75,-0.1) circle (1.3pt);\draw[-](1.75,-0.1)--(2,0.8);
\draw (1.5,0.0) node {\tiny $v_{3}$};
\filldraw [black]     (0.1,1.1) circle (1.3pt);\draw[-](0.1,1.1)--(0.4,0.8);
\draw (0.3,1.2) node {\tiny $u_{1}$};
\filldraw [black]     (2.3,-1.1) circle (1.3pt);\draw[-](2,-0.8)--(2.3,-1.1);
\draw (2.05,-1.15) node {\tiny $u_{2}$};
\filldraw [black]     (0.1,-1.1) circle (1.3pt);\draw[-](0.1,-1.1) --(0.4,-0.8);
\draw (0.25,-1.2) node {\tiny $v_{4}$};
\end{tikzpicture}
\caption{ A planar decomposition $K_{2,9,9}$}
\label{figure 10}
\end{center}
\end{figure}

\vskip 1cm
\begin{figure}[H]
\begin{center}
\begin{tikzpicture}
[scale=0.8]
\tikzstyle{every node}=[font=\tiny,scale=0.8]
\tiny
[inner sep=0pt]
\filldraw [black] (0.5,0) circle (1.3pt);\draw  (0.5,0.2)  node { $u_{1}$};
\filldraw [black](3.3,0) circle (1.3pt);\draw  (3.3,0.24)  node { $u_{2}$};
\filldraw [black](1.9,0.9) circle (1.3pt);\draw  (1.9,0.69)  node { $v_{6}$};
\filldraw [black](1.9,0.3) circle (1.3pt);\draw  (1.9,0.46)  node { $v_{10}$};
\filldraw [black] (1.9,0) circle (1.3pt);\draw  (2.2,0)  node { $u_{13}$};
\filldraw [black] (1.9,-0.3) circle (1.3pt);\draw  (1.9,-0.444)  node { $v_{12}$};
\filldraw [black](1.9,-0.9) circle (1.3pt);\draw  (1.9,-0.73)  node { $v_{8}$};
\filldraw [black] (-0.5,0) circle (1.3pt);\draw  (-0.5,0.23)  node { $u_{4}$};
\filldraw [black](-1.9,0.3) circle (1.3pt);\draw  (-1.9,0.1)  node { $v_{9}$};
\filldraw [black](-3.3,0) circle (1.3pt);\draw  (-3.34,0.2)  node { $u_{3}$};
\filldraw [black] (-1.9,-0.3) circle (1.3pt);\draw  (-1.9,-0.5)  node { $v_{11}$};
\filldraw [black] (-1.9,0.9) circle (1.3pt);\draw  (-1.9,0.7)  node { $v_{5}$};
 \filldraw [black](-1.9,-0.9) circle (1.3pt);\draw  (-1.7,-1)  node { $v_{7}$};
\filldraw [black](0,0.5) circle (1.3pt);\draw  (0.24,0.46)  node { $v_{3}$};
\filldraw [black](0,3.3) circle (1.3pt);\draw  (0.24,3.3)  node { $v_{1}$};
\filldraw [black] (-0.9,1.9) circle (1.3pt);\draw  (-0.65,1.9)  node { $u_{5}$};
\filldraw [black] (-0.3,1.9) circle (1.3pt);\draw  (-0.05,1.9)  node { $u_{6}$};
 \filldraw [black] (0.3,1.9) circle (1.3pt);\draw  (0.55,1.9)  node { $u_{9}$};
\filldraw [black](0.9,1.9) circle (1.3pt);\draw  (1.2,1.86)  node { $u_{10}$};
\filldraw [black] (0,-0.5) circle (1.3pt);\draw  (-0.26,-0.46)  node { $v_{2}$};
\filldraw [black] (0,-3.3) circle (1.3pt);\draw  (0.24, -3.35)  node { $v_{4}$};
\filldraw [black](-1.2, -1.9) circle (1.3pt);\draw  (-1.27,-1.71)  node { $u_{8}$};
\filldraw [black](-0.65, -1.9) circle (1.3pt);\draw  (-0.89,-1.9)  node { $u_{11}$};
\filldraw [black](-0.1,-1.9) circle (1.3pt);\draw  (-0.35,-1.9)  node { $u_{12}$};
\filldraw [black](0.25, -1.9) circle (1.3pt);\draw  (0.27,-2.07)  node { $v_{13}$};
\filldraw [black](0.6, -1.9) circle (1.3pt);\draw  (0.83,-1.94)  node { $u_{7}$};
\draw (-3.3,0)--(0,3.3)--(3.3,0)--(0,-3.3);
\draw (0,0.5)-- (-0.5,0)--(0,-0.5);\draw (0.5,0)--(0,0.5);
\draw  (0,0.5)..controls+(1.7,1.3)and+(-0.5,0.5)..(3.3,0);
\draw  (-0.5,0)..controls+(-1.7,1.3)and+(-0.5,-0.6)..(0, 3.3);
\draw (0, 3.3)--(0.3,1.9)--(0,0.5);\draw (0, 3.3)--(0.9,1.9)--(0,0.5);
\draw (0, 3.3)--(-0.3,1.9)--(0,0.5);\draw (0, 3.3)--(-0.9,1.9)--(0,0.5);
\draw (0, -3.3)--(-1.2,-1.9)--(0,-0.5);\draw (0, -3.3)--(-0.65,-1.9)--(0,-0.5);
\draw (0.3,-1.9)--(-0.1,-1.9)--(0.6,-1.9);
\draw (0, -3.3)--(-0.1,-1.9)--(0,-0.5);\draw (0, -3.3)--(0.6,-1.9)--(0,-0.5);
\draw (-3.3 ,0)--(-1.9,0.3)--(-0.5,0);\draw (-3.3 ,0 )--(-1.9 ,0.9)--(-0.5, 0);
\draw (-3.3 ,0)--(-1.9,-0.3)--(-0.5,0);\draw (-3.3 ,0 )--(-1.9 ,-0.9)--(-0.5, 0);
\draw (3.3 ,0)--(1.9,0.3)--(0.5,0);\draw (3.3 ,0 )--(1.9 ,0.9)--(0.5, 0);
\draw (3.3 ,0)--(1.9,-0.3)--(0.5,0);\draw (3.3 ,0 )--(1.9 ,-0.9)--(0.5, 0);
\draw (1.9,-0.3)--(1.9,0)--(1.9,0.3);
\filldraw [black] (1.2,-1.1) circle (1.3pt);
\draw  (1. 28,-0.9)  node { $x_2$};
\draw  (1.2,-1.1)--(1.9,-0.9);\draw  (1.2,-1.1)--(0 ,-0.5);
\draw  (1.2,-1.1)--(0.5, 0 );\draw  (1.2,-1.1)--(0.6, -1.9 );
\draw  (1.2,-1.1)..controls+(-0.3,0.3)and+(0.1,-0.5)..(0,0.5);
\draw  (1.2,-1.1)..controls+(-0.4,0.3)and+(0.5,-0.1)..(-0.5,0);
\draw  (1.2,-1.1)..controls+(0.39,-0.3)and+(0.01,0.01)..(0,-3.3);
\draw  (1.2,-1.1)..controls+(0.9,-0.2)and+(-0.3,-0.2)..(3.3,0);
\filldraw [black] (-1.8,-1.7) circle (1.3pt);
\draw  (-1.95,-1.8)  node { $x_1$};
\draw  (-3.3,0)--(-1.8,-1.7)--(0,-3.3);
\draw  (-1.9,-0.9)--(-1.8,-1.7)--(-1.2,-1.9);
\draw  (-0.5,0)--(-1.8,-1.7)--(0,-0.5);
\draw   (-1.8,-1.7)..controls+(-0.5,0.2)and+(0.4,-1)..(-3.7,0);
\draw  (-3.7,0)..controls+(-0.4,0.9)and+(-1,-1)..(0,3.3);
\draw   (-1.8,-1.7)..controls+(0.5,-0.5)and+(-0.4,0.01)..(0,-3.7);
\draw (0,-3.7)..controls+(1, 0.01)and+(-1,-1.2)..(3.3, 0);
\begin{scope}[xshift=8.4cm]
\filldraw [black] (-0.5,0) circle (1.3pt);\draw  (-0.52,0.24)  node { $u_{8}$};
\filldraw [black](-3.3,0) circle (1.3pt);\draw  (-3.3,0.2)  node { $u_{7}$};
\filldraw [black](-1.9,0.9) circle (1.3pt);\draw  (-1.66,0.9)  node { $v_{3}$};
\filldraw [black](-1.9,0.45) circle (1.3pt);\draw  (-1.6,0.45)  node { $u_{13}$};
\filldraw [black] (-1.9,0) circle (1.3pt);\draw  (-1.66,0.12)  node { $v_{11}$};
\filldraw [black] (-1.9,-0.45) circle (1.3pt);\draw  (-1.9,-0.3)  node { $v_{1}$};
\filldraw [black](-1.9,-0.9) circle (1.3pt);\draw  (-1.9,-0.73)  node { $v_{9}$};
\filldraw [black] (0.5,0) circle (1.3pt);\draw  (0.5,-0.29)  node { $u_{5}$};
\filldraw [black](1.9,0.3) circle (1.3pt);\draw  (1.9,0.1)  node { $v_{2}$};
\filldraw [black](3.3,0) circle (1.3pt);\draw  (3.39,-0.2)  node { $u_{6}$};
\filldraw [black] (1.9,-0.3) circle (1.3pt);\draw  (1.9,-0.5)  node { $v_{4}$};
\filldraw [black] (1.9,0.9) circle (1.3pt);\draw  (1.9,0.65)  node { $v_{12}$};
 \filldraw [black](1.9,-0.9) circle (1.3pt);\draw  (1.65,-1)  node { $v_{10}$};
\filldraw [black](0,-0.5) circle (1.3pt);\draw  (0.25,-0.52)  node { $v_{6}$};
\filldraw [black](0,-3.3) circle (1.3pt);\draw  (0.25,-3.33)  node { $v_{8}$};
\filldraw [black] (-0.9,-1.9) circle (1.3pt);\draw  (-0.65,-1.9)  node { $u_{3}$};
\filldraw [black] (-0.3,-1.9) circle (1.3pt);\draw  (-0.05,-1.9)  node { $u_{4}$};
 \filldraw [black] (0.3,-1.9) circle (1.3pt);\draw  (0.58,-1.9)  node { $u_{11}$};
\filldraw [black](0.9,-1.9) circle (1.3pt);\draw  (1.02,-1.71)  node { $u_{12}$};
\filldraw [black] (0,0.5) circle (1.3pt);\draw  (-0.22,0.5)  node { $v_{7}$};
\filldraw [black] (0,3.3) circle (1.3pt);\draw  (0.2, 3.4)  node { $v_{5}$};
\filldraw [black](-0.9, 1.9) circle (1.3pt);\draw  (-1.03,1.75)  node { $u_{1}$};
\filldraw [black](0.35, 1.9) circle (1.3pt);\draw  (-0.53,1.75)  node { $v_{13}$};
\filldraw [black](-0.2,1.9) circle (1.3pt);\draw  (-0.35,2.07)  node { $u_{2}$};
\filldraw [black](-0.55, 1.9) circle (1.3pt);\draw  (0.14,1.9)  node { $u_{9}$};
\filldraw [black](0.9, 1.9) circle (1.3pt);\draw  (0.63,1.9)  node { $u_{10}$};
\draw (-3.3,0)--(0,-3.3);\draw (3.3,0)--(0,3.3);
\draw (0,-0.5)-- (-0.5,0)--(0,0.5);\draw (0.5,0)--(0,-0.5);
\draw  (0,-0.5)..controls+(-1.7,-1.3)and+(0.6,-0.5)..(-3.3,0);
\draw  (-0.5,0)..controls+(-1.7,1.3)and+(-0.8,-0.6)..(0, 3.3);
\draw (0, -3.3)--(0.3,-1.9)--(0,-0.5);\draw (0, -3.3)--(0.9,-1.9)--(0,-0.5);
\draw (0, -3.3)--(-0.3,-1.9)--(0,-0.5);\draw (0, -3.3)--(-0.9,-1.9)--(0,-0.5);
\draw (0, 3.3)--(-0.9,1.9)--(0,0.5);\draw (0,3.3)--(0.35,1.9)--(0,0.5);
\draw (-0.2,1.9)--(-0.55,1.9)--(-0.9,1.9);
\draw (0, 3.3)--(-0.2,1.9)--(0,0.5);\draw (0, 3.3)--(0.9,1.9)--(0,0.5);
\draw (3.3 ,0)--(1.9,0.3)--(0.5,0);\draw (3.3 ,0 )--(1.9 ,0.9)--(0.5, 0);
\draw (3.3 ,0)--(1.9,-0.3)--(0.5,0);\draw (3.3 ,0 )--(1.9 ,-0.9)--(0.5, 0);
\draw (-3.3 ,0)--(-1.9,0)--(-0.5,0);\draw (-3.3 ,0 )--(-1.9 ,0.9)--(-0.5, 0);
\draw (-3.3 ,0)--(-1.9,-0.45)--(-0.5,0);\draw (-3.3 ,0 )--(-1.9 ,-0.9)--(-0.5, 0);
\draw(-1.9,0)--(-1.9,0.45);
\filldraw [black] (1.2,1.1) circle (1.3pt);
\draw  (1.31,0.9)  node { $x_2$};
\draw  (1.2,1.1)--(1.9,0.9);\draw  (1.2,1.1)--(0 ,0.5);
\draw  (1.2,1.1)--(0.5, 0 );\draw  (1.2,1.1)--(0.9, 1.9 );
\draw  (1.2,1.1)..controls+(-0.3,-0.33)and+(0.1,0.4)..(0,-0.5);
\draw  (1.2,1.1)..controls+(-0.2,-0.2)and+(0.3,0.1)..(-0.5,0);
\draw  (1.2,1.1)..controls+(0.3,0.5)and+(0.09,-0.12)..(0,3.3);
\draw  (1.2,1.1)..controls+(0.5,0.4)and+(-0.1,0.05)..(3.3,0);
\filldraw [black] (1.8,-1.7) circle (1.3pt);
\draw  (1.95,-1.85)  node { $x_1$};
\draw  (3.3,0)--(1.8,-1.7)--(0,-3.3);
\draw  (1.9,-0.9)--(1.8,-1.7)--(0.9,-1.9);
\draw  (0.5,0)--(1.8,-1.7)--(0,-0.5);
\draw   (1.8,-1.7)..controls+(0.5,0.2)and+(-0.01,-0.9)..(3.7,0);
\draw  (3.7,0)..controls+(0.01,1)and+(1,-0.8)..(0,3.3);
\draw   (1.8,-1.7)..controls+(-0.5,-0.5)and+(0.4,0.01)..(0,-3.7);
\draw (0,-3.7)..controls+(-1, 0.01)and+(1,-1.3)..(-3.3, 0);
\draw   (1.8,-1.7)..controls+(-2,-4)and+(2.3,-3.3)..(-3.55, 0);
\draw (-3.55, 0)..controls+(-0.2, 0.5)and+(-0.4,-0.1)..(-1.9, 0.9);
\end{scope}
\end{tikzpicture}
\end{center}
\end{figure}

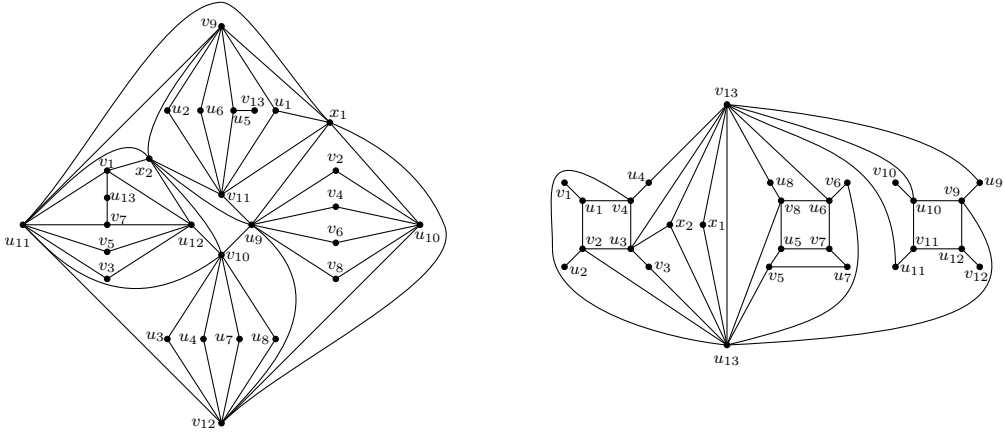
\begin{figure}[htp]
\begin{center}
\begin{tikzpicture}
[scale=0.8]
\tikzstyle{every node}=[font=\tiny,scale=0.8]
\tiny
[inner sep=0pt]
\filldraw [black] (-0.5,0) circle (1.3pt);\draw  (-0.5,-0.29)  node { $u_{12}$};
\filldraw [black](-3.3,0) circle (1.3pt);\draw  (-3.38,-0.29)  node { $u_{11}$};
\filldraw [black](-1.9,-0.9) circle (1.3pt);\draw  (-1.9,-0.71)  node { $v_{3}$};
\filldraw [black](-1.9,-0.45) circle (1.3pt);\draw  (-1.9,-0.3)  node { $v_{5}$};
\filldraw [black] (-1.9,0) circle (1.3pt);\draw  (-1.63,0.45)  node { $u_{13}$};
\filldraw [black] (-1.9,0.45) circle (1.3pt);\draw  (-1.7,0.1)  node { $v_{7}$};
\filldraw [black](-1.9,0.9) circle (1.3pt);\draw  (-1.9,1.02)  node { $v_{1}$};
\filldraw [black] (0.5,0) circle (1.3pt);\draw  (0.54,-0.24)  node { $u_{9}$};
\filldraw [black](1.9,-0.3) circle (1.3pt);\draw  (1.9,-0.1)  node { $v_{6}$};
\filldraw [black](3.3,0) circle (1.3pt);\draw  (3.43,-0.2)  node { $u_{10}$};
\filldraw [black] (1.9,0.3) circle (1.3pt);\draw  (1.9,0.5)  node { $v_{4}$};
\filldraw [black] (1.9,-0.9) circle (1.3pt);\draw  (1.9,-0.7)  node { $v_{8}$};
 \filldraw [black](1.9,0.9) circle (1.3pt);\draw  (1.9,1.1)  node { $v_{2}$};
\filldraw [black](0,-0.5) circle (1.3pt);\draw  (0.28,-0.53)  node { $v_{10}$};
\filldraw [black](0,-3.3) circle (1.3pt);\draw  (-0.29,-3.3)  node { $v_{12}$};
\filldraw [black] (0.9,-1.9) circle (1.3pt);\draw  (0.65,-1.9)  node { $u_{8}$};
\filldraw [black] (0.3,-1.9) circle (1.3pt);\draw  (0.05,-1.9)  node { $u_{7}$};
 \filldraw [black] (-0.3,-1.9) circle (1.3pt);\draw  (-0.55,-1.9)  node { $u_{4}$};
\filldraw [black](-0.9,-1.9) circle (1.3pt);\draw  (-1.1,-1.86)  node { $u_{3}$};
\filldraw [black] (0,0.5) circle (1.3pt);\draw  (0.3,0.46)  node { $v_{11}$};
\filldraw [black] (0,3.3) circle (1.3pt);\draw  (-0.2, 3.35)  node { $v_{9}$};
\filldraw [black](0.9, 1.9) circle (1.3pt);\draw  (1.03,2.01)  node { $u_{1}$};
\filldraw [black](-0.35, 1.9) circle (1.3pt);\draw  (0.53,2.06)  node { $v_{13}$};
\filldraw [black](0.2,1.9) circle (1.3pt);\draw  (0.35,1.74)  node { $u_{5}$};
\filldraw [black](0.55, 1.9) circle (1.3pt);\draw  (-0.1,1.9)  node { $u_{6}$};
\filldraw [black](-0.9, 1.9) circle (1.3pt);\draw  (-0.67,1.9)  node { $u_{2}$};
\draw (3.3,0)--(0,-3.3)--(-3.3,0)--(0,3.3);
\draw (0,-0.5)-- (0.5,0)--(0,0.5);\draw (-0.5,0)--(0,-0.5);
\draw  (0,-0.5)..controls+(-1.7,-1.3)and+(0.5,-0.5)..(-3.3,0);
\draw  (0.5,0)..controls+(1.7,-1.3)and+(0.5,0.6)..(0, -3.3);
\draw (0, -3.3)--(-0.3,-1.9)--(0,-0.5);\draw (0, -3.3)--(-0.9,-1.9)--(0,-0.5);
\draw (0, -3.3)--(0.3,-1.9)--(0,-0.5);\draw (0, -3.3)--(0.9,-1.9)--(0,-0.5);
\draw (0, 3.3)--(0.9,1.9)--(0,0.5);\draw (0, 3.3)--(-0.35,1.9)--(0,0.5);
\draw (0.2,1.9)--(0.55,1.9);
\draw (0, 3.3)--(0.2,1.9)--(0,0.5);\draw (0, 3.3)--(-0.9,1.9)--(0,0.5);
\draw (3.3 ,0)--(1.9,-0.3)--(0.5,0);\draw (3.3 ,0 )--(1.9 ,-0.9)--(0.5, 0);
\draw (3.3 ,0)--(1.9,0.3)--(0.5,0);\draw (3.3 ,0 )--(1.9 ,0.9)--(0.5, 0);
\draw (-3.3 ,0)--(-1.9,-0.45)--(-0.5,0);\draw (-3.3 ,0 )--(-1.9 ,-0.9)--(-0.5, 0);
\draw (-3.3 ,0)--(-1.9,0)--(-0.5,0);\draw (-3.3 ,0 )--(-1.9 ,0.9)--(-0.5, 0);
\draw (-1.9,0.9)--(-1.9,0.45)--(-1.9,0);
\filldraw [black] (-1.2,1.1) circle (1.3pt);
\draw  (-1.28,0.87)  node { $x_2$};
\draw  (-1.2,1.1)--(-1.9,0.9);\draw  (-1.2,1.1)--(0 ,0.5);
\draw  (-1.2,1.1)--(-0.5, 0 );
\draw  (-1.2,1.1)..controls+(0.25,-0.288)and+(-0.1,0.5)..(0,-0.5);
\draw  (-1.2,1.1)..controls+(0.2,-0.15)and+(-0.5,0.1)..(0.5,0);
\draw  (-1.2,1.1)..controls+(-0.2,0.7)and+(-0.01,-0.01)..(0,3.3);
\draw  (-1.2,1.1)..controls+(-0.5,0.7)and+(0.4,0.3)..(-3.3,0);
\filldraw [black] (1.8,1.7) circle (1.3pt);
\draw  (1.95,1.83)  node { $x_1$};
\draw  (3.3,0)--(1.8,1.7)--(0,3.3);
\draw  (1.8,1.7)--(0.9,1.9);
\draw  (0.5,0)--(1.8,1.7)--(0,0.5);
\draw   (1.8,1.7)..controls+(0.5,-0.2)and+(-0.4,1)..(3.7,0);
\draw  (3.7,0)..controls+(0.4,-0.9)and+(1,1)..(0,-3.3);
\draw   (1.8,1.7)..controls+(-0.5,0.5)and+(0.4,-0.01)..(0,3.7);
\draw (0,3.7)..controls+(-1, -0.01)and+(1,1)..(-3.3, 0);
\begin{scope}[xshift=9.5cm]
\filldraw [black]     (-2.05,0) circle(1.3pt);\draw[-](-2.05,0)--(-2.7,-0.4);
\draw (-1.8,0) node {\tiny $x_2$};
\filldraw [black]     (-1.5,0) circle (1.3pt);
\draw (-1.26,0) node {\tiny $x_1$};
\filldraw [black]     (-3.5,0.4) circle (1.3pt);
\draw (-3.3,0.25) node {\tiny $u_{1}$};
\filldraw [black]     (-3.5,-0.4) circle (1.3pt);
\draw (-3.3,-0.28) node {\tiny $v_{2}$};
\filldraw [black]     (-2.7,0.4) circle (1.3pt);
\draw (-2.87,0.25) node {\tiny $v_{4}$};
\filldraw [black]     (-2.7,-0.4) circle (1.3pt);
\draw (-2.9,-0.28) node {\tiny $u_{3}$};
\draw[-](-3.5,0.4)--(-3.5,-0.4)--(-2.7,-0.4)--(-2.7,0.4)--(-3.5,0.4);
\filldraw [black]     (-3.8,0.7) circle (1.3pt);
\draw[-](-3.8,0.7)--(-3.5,0.4);
\draw (-3.8,0.5) node {\tiny $v_{1}$};
\filldraw [black]     (-2.4,0.7) circle (1.3pt);
\draw[-](-2.4,0.7)--(-2.7,0.4);
\draw (-2.6,0.8) node {\tiny $u_{4}$};
\filldraw [black]     (-3.8,-0.7) circle (1.3pt);
\draw[-](-3.5,-0.4)--(-3.8,-0.7);
\draw (-3.56,-0.77) node {\tiny $u_{2}$};
\filldraw [black]     (-2.4,-0.7) circle (1.3pt);
\draw[-](-2.7,-0.4) --(-2.4,-0.7);
\draw (-2.16,-0.7) node {\tiny $v_{3}$};

\filldraw [black]     (-0.2,0.4) circle (1.3pt);
\draw (0,0.25) node {\tiny $v_{8}$};
\filldraw [black]     (-0.2,-0.4) circle (1.3pt);
\draw (0,-0.28) node {\tiny $u_{5}$};
\filldraw [black]     (0.6,0.4) circle (1.3pt);
\draw (0.42,0.25) node {\tiny $u_{6}$};
\filldraw [black]     (0.6,-0.4) circle (1.3pt);
\draw (0.42,-0.28) node {\tiny $v_{7}$};
\draw[-](-0.2,0.4)--(-0.2,-0.4)--(0.6,-0.4)--(0.6,0.4)--(-0.2,0.4);
\filldraw [black]     (-0.38,0.7) circle (1.3pt);
\draw[-](-0.38,0.7)--(-0.2,0.4);
\draw (-0.13,0.7) node {\tiny $u_{8}$};
\filldraw [black]     (0.9,0.7) circle (1.3pt);
\draw[-](0.9,0.7)--(0.6,0.4);
\draw (0.67,0.7) node {\tiny $v_{6}$};
\filldraw [black]     (-0.4,-0.7) circle (1.3pt);
\draw[-](-0.4,-0.7)--(-0.2,-0.4);
\draw (-0.27,-0.88) node {\tiny $v_{5}$};
\filldraw [black]     (0.9,-0.7) circle (1.3pt);
\draw[-](0.9,-0.7) --(0.6,-0.4);
\draw (0.83,-0.88) node {\tiny $u_{7}$};
\draw[-](0.9,-0.7) --(-0.4,-0.7);
\filldraw [black]     (2,0.4) circle (1.3pt);
\draw (2.26,0.25) node {\tiny $u_{10}$};
\filldraw [black]     (2,-0.4) circle (1.3pt);
\draw (2.24,-0.28) node {\tiny $v_{11}$};
\filldraw [black]     (2.8,0.4) circle (1.3pt);
\draw (2.65,0.55) node {\tiny $v_{9}$};
\filldraw [black]     (2.8,-0.4) circle (1.3pt);
\draw (2.64,-0.55) node {\tiny $u_{12}$};
\draw[-](2,0.4)--(2,-0.4)--(2.8,-0.4)--(2.8,0.4)--(2,0.4);
\filldraw [black]     (1.7,0.7) circle (1.3pt);\draw[-](1.7,0.7)--(2,0.4);
\draw (1.53,0.87) node {\tiny $v_{10}$};
\filldraw [black]     (3.1,0.7) circle (1.3pt);\draw[-](3.1,0.7)--(2.8,0.4);
\draw (3.34,0.7) node {\tiny $u_{9}$};
\filldraw [black]     (1.7,-0.7) circle (1.3pt);\draw[-](2,-0.4)--(1.7,-0.7);
\draw (2,-0.75) node {\tiny $u_{11}$};
\filldraw [black]     (3.1,-0.7) circle (1.3pt);\draw[-](3.1,-0.7) --(2.8,-0.4);
\draw (3.05,-0.85) node {\tiny $v_{12}$};
\filldraw [black]     (-1.1,-2) circle (1.3pt);\draw[-](-1.1,-2)--(-1.1,2);
\draw (-1.1,-2.25) node {\tiny $u_{13}$};
\filldraw [black]     (-1.1,2) circle (1.3pt);
\draw (-1.1,2.2) node {\tiny $v_{13}$};
\draw[-](-1.1,-2)..controls+(2,0.5)and+(0.5,-2)..(0.9,0.7);
\draw[-](-1.1,-2)--(-3.5,-0.4);
\draw[-](-1.1,-2)..controls+(4,0.2)and+(1.3,-1.7)..(2.8,0.4);
\draw[-](-4,-0.4)..controls+(0,1)and+(-1.6,1.1)..(-2.7,0.4);
\draw[-](-1.1,-2)..controls+(-1,0.1)and+(0,-1)..(-4,-0.4);
\draw[-](-1.1,-2)--(-2.4,-0.7);
\draw[-](-1.1,-2)--(-2.05,0);
\draw[-](-1.1,-2)--(-1.5,0);
\draw[-](-1.1,-2)--(-0.4,-0.7);
\draw[-](-1.1,-2)--(-0.2,0.4);
\draw[-](-1.1,2)--(-2.7,-0.4);
\draw[-](-1.1,2)--(-1.5,0);
\draw[-](-1.1,2)-- (-2.4,0.7);
\draw[-](-1.1,2)--(-2.05,0);
\draw[-](-1.1,2)--(-0.38,0.7);
\draw[-](-1.1,2)--(0.6,0.4);
\draw[-](-1.1,2)..controls+(2.5,-0.7)and+(-0.01,0.6)..(2,0.4);
\draw[-](-1.1,2)..controls+(3,-0.3)and+(-0.4,0.5)..(3.1,0.7);
\draw[-](-1.1,2)..controls+(1.2,-0.7)and+(0,2)..(1.7,-0.7);
\end{scope}
\end{tikzpicture}
\caption{A planar decomposition of $K_{2,13,13}$}
\label{figure 11}
\end{center}
\end{figure}

Summarizing Cases 1,2 and 3, the lemma follows.
\end{proof}

\begin{theorem}\label{main2} The thickness of the complete 3-partite graph $K_{2,n,n}$ is
$$\theta(K_{2,n,n})=\big\lceil\frac{n+3}{4}\big\rceil.$$ \end{theorem}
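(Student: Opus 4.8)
The plan is to split on the residue of $n$ modulo $4$ and to exploit the elementary observation that $\lceil\frac{n+3}{4}\rceil=\lceil\frac{n+2}{4}\rceil$ for every $n$ \emph{except} $n\equiv 2\pmod 4$, where the former exceeds the latter by one. Consequently, when $n\equiv 0$ or $n\equiv 3\pmod 4$ the target value $\lceil\frac{n+3}{4}\rceil$ is literally $\lceil\frac{n+2}{4}\rceil$, and the statement is nothing but Lemma~\ref{2.2}; no further work is required in those two classes. This isolates $n=4p+1$ and $n=4p+2$ as the only residues carrying new information, and it also explains conceptually why $K_{2,n,n}$ can be thicker than $K_{1,n,n}$ only at $n\equiv 2$.

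For $n=4p+1$ I would simply sandwich. Since $K_{4p+1,4p+1}\subset K_{2,4p+1,4p+1}$, Lemma~\ref{2.1} gives $\theta(K_{2,4p+1,4p+1})\ge\theta(K_{4p+1,4p+1})=p+1$, while Lemma~\ref{2.4} exhibits a planar decomposition with $p+1$ subgraphs, so $\theta(K_{2,4p+1,4p+1})\le p+1$. The two bounds meet at $p+1=\lceil\frac{(4p+1)+3}{4}\rceil$, finishing this case.

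The genuine content is $n=4p+2$, the single residue where the answer jumps to $p+2$. The upper bound is immediate: $K_{2,4p+2,4p+2}\subset K_{2,4(p+1),4(p+1)}$ with $4(p+1)\equiv 0\pmod 4$, so Lemma~\ref{2.2} yields $\theta(K_{2,4(p+1),4(p+1)})=p+2$ and hence $\theta(K_{2,4p+2,4p+2})\le p+2$. For the lower bound I would run an Euler-type count that uses the key structural fact that every triangle of $K_{2,n,n}$ must contain one of the apices $x_1,x_2$, since the three colour classes are independent. For a planar subgraph $G_i$ of a decomposition into $k$ pieces, set $a_i=\deg_{G_i}(x_1)$ and $b_i=\deg_{G_i}(x_2)$; each triangular face is incident to exactly one apex and the faces around an apex number at most its degree, so the number $t_i$ of triangular faces obeys $t_i\le a_i+b_i$. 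Plugging $2E_i=\sum_f\ell(f)\ge 4F_i-t_i$ into Euler's formula gives $E_i\le 2V_i-4+\tfrac12 t_i$, and summing over the $k$ subgraphs with $V_i\le 2n+2$ and $\sum_i(a_i+b_i)=4n$ produces $n^2+4n=\sum_iE_i\le 4nk+2n$, that is $k\ge\frac{n+2}{4}=p+1$.

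The hard part — and the real obstacle — is that this counting stops exactly one short, at $k\ge p+1$. To reach $k\ge p+2$ I would assume $k=p+1$ and note that the arithmetic is then \emph{tight to the integer}: the bound $4nk+2n$ equals $n^2+4n$ precisely when $k=p+1$, so every inequality above must hold with equality. This forces an extremely rigid structure on each $G_i$: it is connected and spanning ($V_i=2n+2$), every face meeting an apex is a triangle while every remaining face is a quadrilateral, and $m_i:=E_i-a_i-b_i=4n-\tfrac12(a_i+b_i)$ with $a_i+b_i\ge 8$. In particular the rotation at each apex must alternate between $U$ and $V$, making $a_i,b_i$ even and forcing $G_i[U\cup V]$ to be a near-quadrangulation on all $2n$ vertices. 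I expect the contradiction to emerge from the impossibility of simultaneously realizing $p+1$ such spanning apex-triangulated quadrangulations whose apex-fans and quadrilateral faces partition the $n^2$ edges of $K_{n,n}$, most plausibly via a parity or incidence obstruction in the forced link cycles of $x_1$ and $x_2$; pinning down that obstruction cleanly is where the difficulty of the whole theorem sits, with the boundary value $p=0$ (the octahedron $K_{2,2,2}$) and the first few small $p$ to be treated by direct inspection.
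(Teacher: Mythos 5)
Your treatment of $n\equiv 0,1,3\pmod 4$ is correct and essentially the same as the paper's (for $n=4p+1$ the paper sandwiches between $\theta(K_{4p,4p})=p+1$ and Lemma \ref{2.4}, while you use $K_{4p+1,4p+1}$; both work), and your upper bound $\theta(K_{2,4p+2,4p+2})\le p+2$ via $K_{2,4p+4,4p+4}$ is fine. The genuine gap is exactly where you locate it: the lower bound $\theta(K_{2,4p+2,4p+2})\ge p+2$. Your Euler count is carried out correctly, but it terminates at $k\ge (n+2)/4=p+1$, one short, and the proposed escalation --- assume $k=p+1$, extract the forced rigid structure, and derive a contradiction from ``a parity or incidence obstruction'' --- is never executed. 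As written this is a conjecture about where a contradiction might be found, not an argument, so the proposal does not prove the theorem in the one residue class that carries new content.

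For comparison, the paper disposes of this step in one line by asserting $K_{4p+3,4p+3}\subset K_{2,4p+2,4p+2}$ and invoking Lemma \ref{2.1}. You should be aware that this containment does not actually hold: a copy of $K_{n+1,n+1}$ would use all $2n+2$ vertices, and since vertices in a common part of $K_{2,n,n}$ are non-adjacent, each part would have to lie entirely on one side of the bipartition, which is impossible for parts of sizes $2,n,n$ and sides of size $n+1$. Concretely $x_1x_2$ is a non-edge, so $K_{2,n,n}$ contains only $K_{n+1,n+1}$ minus one edge, whose Euler lower bound is again $p+1$; and at $p=0$ the graph $K_{2,2,2}$ is the planar octahedron, so $\theta(K_{2,2,2})=1\ne\lceil 5/4\rceil$. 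In other words, the obstacle you ran into is a real one --- the $n\equiv 2$ lower bound cannot be obtained from the paper's stated containment either --- and completing your tightness analysis (or finding a correct substitute) is precisely what is needed to close both your argument and this step of the paper's.
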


\begin{proof}
When $n=4p,4p+3$, from Lemma \ref{2.2}, the theorem holds.

When $n=4p+1$, from Lemma \ref{2.4}, we have $\theta(K_{2,4p+1,4p+1})\leq p+1$. Since $\theta(K_{4p,4p})=p+1$ and $K_{4p,4p}\subset K_{2,4p+1,4p+1}$, we have
$$p+1=\theta(K_{4p,4p})\leq \theta(K_{2,4p+1,4p+1})\leq p+1.$$
Therefore, $\theta(K_{2,4p+1,4p+1})=p+1$.

When $n=4p+2$, since $K_{4p+3,4p+3}\subset K_{2,4p+2,4p+2}$, from Lemma \ref{2.1}, we have $p+2=\theta(K_{4p+3,4p+3})\leq \theta(K_{2,4p+2,4p+2})$. On the other hand, it is easy to see $\theta(K_{2,4p+2,4p+2})\leq \theta(K_{2,4p+1,4p+1})+1=p+2$, so we have $\theta(K_{2,4p+2,4p+2})=p+2$.

Summarizing the above, the theorem is obtained.\end{proof}

\section{The thickness of $K_{1,1,n,n}$ }

\begin{theorem}\label{main3} The thickness of the complete 4-partite graph $K_{1,1,n,n}$ is
$$\theta(K_{1,1,n,n})=\big\lceil\frac{n+3}{4}\big\rceil.$$ \end{theorem}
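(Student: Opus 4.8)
The plan is to derive the thickness of $K_{1,1,n,n}$ from the thickness of $K_{2,n,n}$ established in Theorem \ref{main2}, exploiting the fact that the two graphs differ only by a single edge. Observe that $K_{1,1,n,n}$ is obtained from $K_{2,n,n}$ by adding the edge $x_1x_2$ joining the two vertices in the part $X=\{x_1,x_2\}$; conversely, deleting that one edge from $K_{1,1,n,n}$ yields $K_{2,n,n}$. Since thickness is monotone under taking subgraphs, we immediately get the lower bound
$$\theta(K_{1,1,n,n})\geq \theta(K_{2,n,n})=\big\lceil\tfrac{n+3}{4}\big\rceil.$$
So the entire content of the theorem is the matching upper bound: I must show that adding the single edge $x_1x_2$ does not force an extra planar subgraph.

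\textbf{Main step: absorbing the edge $x_1x_2$.} The approach is to reuse the planar decompositions constructed in the proof of Theorem \ref{main2} (via Lemma \ref{2.4} and Lemma \ref{2.2}) and to show that in at least one of the planar subgraphs $\widehat{G}_i$ of $K_{2,n,n}$ one can route the extra edge $x_1x_2$ while preserving planarity. Concretely, I would examine the subgraph $\widehat{G}_{p+1}$ (the ``leftover'' graph collecting the residual edges), since $x_1$ and $x_2$ both appear there with relatively few incident edges and a simple outer structure. Looking at Figure \ref{figure 7} and Figure \ref{figure 8}, the vertices $x_1$ and $x_2$ are placed on opposite sides of the drawing, each on the outer boundary; the task is to verify that there is a face of $\widehat{G}_{p+1}$ whose boundary contains both $x_1$ and $x_2$, so that the edge $x_1x_2$ can be drawn inside that face without crossings. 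If such a face exists, then replacing $\widehat{G}_{p+1}$ by $\widehat{G}_{p+1}+x_1x_2$ keeps it planar, and $\{\widehat{G}_1,\dots,\widehat{G}_p,\widehat{G}_{p+1}+x_1x_2\}$ is a planar decomposition of $K_{1,1,n,n}$ with $p+1$ subgraphs, giving $\theta(K_{1,1,n,n})\leq \big\lceil\tfrac{n+3}{4}\big\rceil$.

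\textbf{Handling the residue classes.} Because Theorem \ref{main2} treats $n$ by its residue modulo $4$, I would organize the upper bound the same way. For $n\equiv 1\pmod 4$ the decomposition of $K_{2,n,n}$ comes from Lemma \ref{2.4} (Cases 1, 2, 3), and in each case I would point to the drawing of $\widehat{G}_{p+1}$ and exhibit $x_1$ and $x_2$ on a common face; the small cases $p\le 3$ (Figures \ref{figure 9}--\ref{figure 11}) are checked directly. For $n\equiv 0,3\pmod 4$ the bound $\theta(K_{2,n,n})=\theta(K_{1,1,n,n})$ can instead be read off the sandwich $K_{2,n,n}\subset K_{1,1,n,n}\subset K_{2,n+1,n+1}$ together with Lemma \ref{2.2}, avoiding any redrawing. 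For $n\equiv 2\pmod 4$ one uses $K_{1,1,4p+2,4p+2}\subset K_{2,4p+3,4p+3}$ and the already-computed value $\theta(K_{2,4p+3,4p+3})=p+2$, which matches $\big\lceil\tfrac{(4p+2)+3}{4}\big\rceil=p+2$.

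\textbf{The expected obstacle} is the $n\equiv 1\pmod 4$ case, where I genuinely need the explicit geometry rather than a sandwich argument: I must confirm, for each of Cases 1 and 2, that the global drawing of $\widehat{G}_{p+1}$ really does leave $x_1$ and $x_2$ cofacial. In Figure \ref{figure 7} the two vertices $x_1$ and $x_2$ sit at the bottom-left and top-right corners joined through the outer region, so the outer face appears to contain both, which is exactly what is needed; I would verify the analogous statement for Figure \ref{figure 8} and for the sporadic small graphs. Once this cofaciality is checked in every subcase, the upper bound follows and, combined with the subgraph lower bound above, completes the proof that $\theta(K_{1,1,n,n})=\big\lceil\tfrac{n+3}{4}\big\rceil$.
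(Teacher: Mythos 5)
Your lower bound and your treatment of $n\equiv 1\pmod 4$ coincide with the paper's argument: the paper likewise adds the single edge $x_1x_2$ to the last subgraph $\widehat{G}_{p+1}$ of the decomposition from Lemma \ref{2.4}, observing that $x_1$ and $x_2$ lie on the boundary of a common face in Figures \ref{figure 7} and \ref{figure 8} (and in the sporadic small decompositions of Figures \ref{figure 9}--\ref{figure 11}), so the extra edge is absorbed at no cost.

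However, your reduction of the remaining residue classes rests on a false containment: $K_{1,1,n,n}\not\subset K_{2,n+1,n+1}$, so the sandwich you propose for $n\equiv 0,3\pmod 4$, and likewise the inclusion $K_{1,1,4p+2,4p+2}\subset K_{2,4p+3,4p+3}$ for $n\equiv 2\pmod 4$, do not hold. Indeed, in any embedding of $K_{1,1,n,n}$ into a complete $3$-partite graph the preimages of the three parts must be independent sets of $K_{1,1,n,n}$; since $x_1$ and $x_2$ are each adjacent to every other vertex and to each other, their images must occupy two distinct parts all by themselves, leaving a single part of size $n+1$ to receive the remaining $2n$ vertices of $U\cup V$ (which, moreover, is not an independent set). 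Equivalently, $K_{1,1,n,n}$ has two \emph{adjacent} vertices of degree $2n+1$, while for $n\geq 3$ the only vertices of $K_{2,n+1,n+1}$ of degree at least $2n+1$ are the two non-adjacent vertices in the part of size $2$. So your upper bound for $n\equiv 0,2,3\pmod 4$ is not established. The repair is the route the paper takes: stay inside the $4$-partite family and use $K_{1,1,n,n}\subset K_{1,1,n',n'}$, where $n'$ is the next integer congruent to $0$ or $1$ modulo $4$, together with the value $\theta(K_{1,1,4q,4q})=\theta(K_{1,1,4q+1,4q+1})=q+1$ that your explicit construction (plus the lower bound $K_{4q+1,4q+1}\subset K_{1,1,4q,4q}$ and Lemma \ref{2.1}) already delivers. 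With that substitution the argument closes.
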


\begin{proof} When $n=4p+1$, we can get a planar decomposition for $K_{1,1,4p+1,4p+1}$ from that of $K_{2,4p+1,4p+1}$ as follows.

(1) When $p=0$, $K_{1,1,1,1}$ is a planar graph, $\theta(K_{1,1,1,1})=1$. When $p=1,2$ and $3$, we join the vertex $x_{1}$ to $x_{2}$ in the last planar subgraph in the planar decomposition for $K_{2,5,5}$, $K_{2,9,9}$ and $K_{2,13,13}$ which was shown in Figure \ref{figure 9}, \ref{figure 10} and \ref{figure 11}. Then we get the planar decomposition for $K_{1,1,5,5}$, $K_{1,1,9,9}$ and $K_{1,1,13,13}$ with 2, 3 and 4 planar subgraphs respectively.

(2) When $p\geq 4$,  we join the vertex $x_{1}$ to $x_{2}$ in $\widehat{G}_{p+1}$ in the planar decomposition for $K_{2,4p+1,4p+1}$ which was constructed in Lemma \ref{2.4}. The $\widehat{G}_{p+1}$ is shown in Figure \ref{figure 7} or \ref{figure 8} according to $p$ is even or odd.  Because $x_1$ and $x_2$ lie on the boundary of the same face, we will get a planar graph by adding edge $x_{1}x_{2}$ to $\widehat{G}_{p+1}$. Then a planar decomposition for
$K_{1,1,4p+1,4p+1}$ with $p+1$ planar subgraphs can be obtained.

\noindent Summarizing (1) and (2), we have $K_{1,1,4p+1,4p+1}\leq p+1$.

On the other hand, from Lemma \ref{2.1}, we have $\theta(K_{4p+1,4p+1})=p+1$. Due to $K_{4p+1,4p+1}\subset K_{1,1,4p,4p}\subset K_{1,1,4p+1,4p+1}$, we get $p+1\leq\theta(K_{1,1,4p,4p}) \leq \theta(K_{1,1,4p+1,4p+1})$.  So we have $$\theta(K_{1,1,4p,4p})=\theta(K_{1,1,4p+1,4p+1})=p+1.$$

When $n=4p+3$, from  Theorem \ref{main2} , we have $\theta(K_{2,4p+2,4p+2})=p+2$. Since $K_{2,4p+2,4p+2}\subset K_{1,1,4p+2,4p+2}\subset K_{1,1,4p+3,4p+3}\subset K_{1,1,4(p+1),4(p+1)}$, and the ideas from the previous case establish, we have
$p+2\leq \theta(K_{1,1,4p+2,4p+2})\leq \theta(K_{1,1,4p+3,4p+3})\leq\theta(K_{1,1,4(p+1),4(p+1)})=p+2, $
which shows $$\theta(K_{1,1,4p+2,4p+2})= \theta(K_{1,1,4p+3,4p+3})=p+2.$$

Summarizing the above, the theorem follows.
\end{proof}

\bibliography{bibfile}
\end{document}